\documentclass[11pt]{amsart}
\usepackage{amsmath,amsthm,amssymb}
\usepackage{color}
\usepackage{epsfig}
\usepackage{graphicx}


\addtolength{\oddsidemargin}{-.8in}
\addtolength{\evensidemargin}{-.8in}
\addtolength{\textwidth}{1in}
\setlength{\marginparwidth}{0.8in}

\def\E{{\mathbb{E}}}

\def\P{{\mathbb{P}}}

\def\R{{\mathbb{R}}}

\def\Z{{\mathbb{Z}}}
\def\PP{{\mathbb{P}}}

\newcommand{\reals}{{\mathbb R}}
\newcommand{\integer}{{\mathbb Z}}

\newcommand{\pintegers}{{\integer_+}}

\newcommand{\wt}{\widetilde}
\newcommand{\s}{\sigma}


\def\8{\infty}

\def\E{\mathbb{E}}
\def\P{\mathbb{P}}

\def\<{\langle}
\def\>{\rangle}

\renewcommand{\d}{\delta}
\renewcommand{\a}{\alpha}
\renewcommand{\b}{\beta}

\renewcommand{\l}{\lambda}
\renewcommand{\L}{\Lambda}

\newcommand{\e}{{\mathrm{e}}}
\newcommand{\eps}{\varepsilon}

\newcommand{\g}{\gamma}

\newcommand{\ov}{\overline}


\newtheorem{thm}[equation]{Theorem}
\newtheorem{rem}[equation]{Remark}

\newtheorem{lem}[equation]{Lemma}

\newtheorem{prop}[equation]{Proposition}
\theoremstyle{definition}

\numberwithin{equation}{section}

\begin{document}

\title{Pointwise estimates for first passage times of perpetuity sequences}
\author[D. Buraczewski, E. Damek  and J. Zienkiewicz]
{D.~Buraczewski, E.~Damek,  and J.~Zienkiewicz}
\address{D. Buraczewski, E. Damek, and J. Zienkiewicz\\ Instytut Matematyczny\\ Uniwersytet Wroclawski\\ 50-384 Wroclaw\\
pl. Grunwaldzki 2/4\\ Poland}
\email{dbura@math.uni.wroc.pl\\ edamek@math.uni.wroc.pl\\
zenek@math.uni.wroc.pl}

\begin{abstract}
We consider first passage times $\tau_u = \inf\{n:\; Y_n>u\}$ for the perpetuity sequence
$$
Y_n = B_1 + A_1 B_2 + \cdots + (A_1\ldots A_{n-1})B_n,
$$ where $(A_n,B_n)$ are i.i.d.  random variables with values in $\R ^+\times \R$. 
Recently, a number of limit theorems related to $\tau_u$ were proved including the law of large numbers, the central limit theorem and large deviations theorems (see \cite{BCDZ}). 
We obtain a precise asymptotics of the sequence $\P[\tau_u =  \log u/\rho ]$, $\rho >0$, $u\to \infty $ which considerably improves the previous results of \cite{BCDZ}. There, probabilities $\P[\tau_u \in I_u]$ were identified, for some large intervals $I_u$ around $k_u$, with lengths growing at least as $\log\log u$. Remarkable analogies and differences to random walks \cite{BM,Laley} are discussed.
\end{abstract}

\thanks{The research was  supported by the NCN grant DEC-2012/05/B/ST1/00692.}

\keywords{Random recurrence equations, stochastic fixed point equations, first passage times, large deviations, asymptotic behavior, ruin probabilities}
\subjclass[2010]{Primary 60H25; secondary  60F10, 60J10}

 \maketitle
\section{Introduction}

\subsection{The perpetuity sequence and first passage times} Let $(A_n,B_n)$ be i.i.d.~(independent identically distributed) random variables with values in $\R ^+\times \R$.
We consider the perpetuity sequence
\begin{equation*}
Y_n = B_1 + A_1 B_2 + \cdots + (A_1 \cdots A_{n-1}) B_n, \quad n=1,2,\ldots
\end{equation*}
If $\E [ \log A] <0$ and $\E[\log^+|B|]<\8$,
 $Y_n$ converges a.s. to the random
variable
\begin{equation*}
Y = \sum_{n=0}^\8 A_1\ldots A_{n}B_{n+1},
\end{equation*}
that is the unique solution to the random
difference equation
\begin{equation*}
Y \stackrel{d}{=} AY+B, \qquad \mbox{$Y$ independent
of }(A,B).
\end{equation*}


 The perpetuity process $\{Y_n\}$ is frequently present in both applied  and theoretical problems. On one hand, the perpetuity sequence plays an important role in analyzing the ARCH and GARCH financial time series models, see
 Engle \cite{Engle} and Mikosch \cite{Mikosch}. On the other, it is connected to the random walk in random environment \cite{KKS}, the weighted branching process and the branching random walk, see Guivarc'h \cite{Guivarc'h}, Liu \cite{Liu} and Buraczewski \cite{Buraczewski}. We refer the reader to  recent monographs \cite{BDM,Iksanov:2017} for an overview on the subject.


The main objective of this paper is to study the asymptotic behavior of the  first passage time
$$
\tau _u:= \inf \left\{ n:  Y_n > u \right\}
\quad \mbox{as} \:\: u \to \infty.
$$
This is a basic question motivated partly by similar problems considered for random walks $\log \Pi_n=\log(  A_1\ldots A_n)$, see the work of Lalley \cite{Laley}. Some partial results in this direction were proved by Nyrhinen \cite{N1, N2}.
  An essential progress has been  recently achieved in \cite{BCDZ} and the aim of the present paper is to pursue the investigation further.

The tail of $Y$ was analysed under the Cram\'er condition
\begin{equation}\label{eq:cramer}
  \E\big[ A^{\a_0}\big] = 1 \ \mbox{ for some }\a_0>0.
\end{equation}
Then Kesten \cite{Kesten} and Goldie \cite{Go} proved that\footnote{We  write $f(u) \sim g(u)$ for two functions $f$ and $g$, if $f(u)/g(u) \to 1$ as $u\to\8$.}
\begin{equation}
\label{eq: twostarrrr}
\P[Y>u] \sim c_+ u^{-\a_0} \quad \mbox{as } u\to\8,
\end{equation}
which entails
\begin{equation}
\label{eq: twostar}
\P[\tau_u <\8] \sim c_0 u^{-\a_0} \quad \mbox{as } u\to\8.
\end{equation}

In \cite{BCDZ} the authors proved the law of large numbers (\cite{BCDZ}, Lemma 2.1)
$$
\frac{\tau_u}{\log u}\; \bigg| \tau_u <\8 \stackrel{\P}{\longrightarrow} \frac 1{\rho_0}
$$
and the central limit theorem (\cite{BCDZ}, Theorem 2.2)
\begin{equation}\label{eq:clt}
\frac{\tau_u - \log u/\rho_0}{ \sigma_0 \rho_0^{-3/2} \sqrt{  \log u}}\; \bigg| \tau_u <\8 \stackrel{d}{\longrightarrow} N(0,1),
\end{equation}
where
$\rho_0 = \E[A^{\alpha_0} \log A]$, $\sigma_0 = \E[ A^{\alpha_0}(\log A )^2]$,  and $\stackrel{\PP}\longrightarrow$ (resp. $\stackrel{d}\longrightarrow$) denotes convergence in probability (resp. in ditribution).

They also considered large deviations and showed that for $\rho>\rho_0$ (\cite{BCDZ}, Theorem 2.1)
$$
\P\bigg(   \frac{\tau_u}{\log u} < \frac 1{\rho}    \bigg) \sim \frac{c(\rho)}{\sqrt{\log u}} u^{-I(\rho)}
$$
for some rate function $I(\rho)$ that will be defined below (see \eqref{eq:onestar}). Indeed, the result in \cite{BCDZ}  is  more precise and describes the asymptotic of
$$
\P[\tau_u\in I_u]
$$
for intervals $I_u$ around $ \log u/\rho$ of the length of the order $\log\log u$.
 This implies that $\log u$ is the correct scaling for perpetuities as it is for the random walks. More details will be given below.

 In this paper we go a step further and, under some continuity assumption on $A$, we describe the pointwise behavior of $\tau_u$, that is the asymptotic of
$$
\P\big[\tau_u = \lfloor \log u /\rho \rfloor  \big], \quad u\to \infty
$$ for any $\rho > 0$.

The results we obtain are partly with analogy to random walks but partly they are completely different and reveal essential differences in behavior of perpetuities and the corresponding random walk, see Theorems \ref{thm: main theorem} and \ref{mthm2}.

\subsection{Main results}

While studying perpetuities the main role is played by fluctuations of the random walk $\Pi_n = A_1\ldots A_n$; see \cite{BDM,Go}. In this paper  the contractive case $\E \log A < 0$ is studied, which entails that $\Pi_n$ converges to 0 a.s. On the other hand our hypotheses imply that $\P(A>1)>0$, thus the process $\Pi_n$ may attain (with small probability) arbitrary large values.

Properties of both processes $\{\Pi_n\}$ and $\{Y_n\}$ are essentially determined by the moments generating functions
$$\lambda(s) = \E[A^s], \qquad \mbox {and} \qquad \Lambda(s) = \log\lambda(s).$$
We denote $\a_\8 = \sup \{\a:\; \lambda(\a)<\8\}$ and $\a_{\min}$ ={\rm argmin}$\lambda(\a)$.
Then both functions $\lambda$ and $\Lambda$ are smooth and convex on their domain $[0,\a_{\8})$.

 Recall that the convex conjugate (or the Fenchel-Legendre transform) of $\Lambda$ is defined by
$$
\Lambda^*(x) = \sup_{s\in \R}\{sx - \Lambda(s)\}, \quad x\in\R.
$$ $\Lambda^*$ appears in studying large deviations problems for random walks. Its various properties can be found in Dembo, Zeitouni \cite{DZ}. Given $\a$ and $\rho=\Lambda'(\a)$ we consider
$
\overline \a = \Lambda^*(\rho)/\rho.
$ An easy calculation shows that
\begin{equation}\label{eq:onestar}
\overline{\alpha} = \alpha - \frac{\Lambda(\alpha)}{\Lambda'(\alpha)}.
\end{equation}
The parameter $\overline \a$ arises in the classical large deviations theory for random walks.
As we will see below, $\overline \a$  plays a  crucial role in our results.
 This parameter has a geometric interpretation: the tangent line to $\Lambda$ at point $(\alpha,\Lambda(\alpha))$ intersects the
$x$-axis at $(\overline{\alpha},0)$. See the figure below.


Our main result is the following

\begin{thm}
\label{thm: main theorem}
Given $\rho>0$ suppose there exists $\a < \a_\8$ such that $\rho = \Lambda'(\a)$. Assume additionally
\begin{eqnarray}
\label{H1} &&\E \log A <0;\\
\label{H2} &&\mbox{$\E A^{\a+\eps}<\8$ and  $\E |B|^{\a+\eps}<\8$ for some }\eps>0;\\
\label{H1.4} &&\mbox{either $\a_{\min}\le 1$ or $\Lambda(1) < \Lambda(\a)$;}\\
\label{H1.3} &&\mbox{there are $(a_1,b_1), (a_2,b_2)\in {\rm supp\, } \mu$ such that $a_1<1$, $a_2>1$ and $\frac{b_2}{1-a_2}< \frac{b_1}{1-a_1}$};\\
\label{H1.6} &&\mbox{the law of } A\ \mbox{has density }  f_A(a)\ \mbox{satisfying}\ f_A(a)\!\leq C(1+a) ^{-D}  \mbox{for some}\    D\!>\!1\!+\!\a ;\\
\label{H1.2} && \P[A\in da, B\in db] \le f_A(a)da d\nu(b) \mbox{ for some probability measure $\nu$}.
\end{eqnarray}
Then
\begin{equation}
\label{eq: aymp}
\P\big[\tau_u=\lfloor \log u/\rho\rfloor \big] \sim \frac{c(\a) \lambda(\a)^{-\Theta(u)}}{\sqrt{\log u} } \, u^{-\ov \a},\qquad \mbox{ as } u\to\8,
\end{equation}
for some strictly positive constant $C$ and $\Theta(u) = k_u - \lfloor k_u\rfloor $,
where $k_u = \log u/ \rho$.
\end{thm}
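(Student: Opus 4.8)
The plan is to reduce the pointwise estimate to a local (one-point) version of the large-deviation estimate for $\P[\tau_u \in I_u]$ from \cite{BCDZ}, using the Cram\'er-type change of measure associated with the exponent $\a$. Write $k_u = \log u/\rho$ and $n = \lfloor k_u \rfloor$, so $\Theta(u) = k_u - n$. The first step is to split the event $\{\tau_u = n\}$ according to the behavior of the driving sequence: on $\{\tau_u = n\}$ we have $Y_{n-1}\le u < Y_n$, and since $Y_n = Y_{n-1} + \Pi_{n-1}B_n$ with $Y_{n-1}\le u$, the passage forces $\Pi_{n-1}B_n > u - Y_{n-1} \ge 0$, i.e.\ one of the increments $\Pi_{j-1}B_j$ must be large. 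Following the Goldie/Kesten heuristic for \eqref{eq: twostarrrr}, the dominant contribution comes from a single large jump $\Pi_{j-1}B_j$ near the endpoint together with $\Pi_{j-1}\approx u \cdot(\text{moderate})$; more precisely, one expects the main term to come from configurations where $\Pi_{n-1}$ itself reaches a level comparable to $u$ via the large-deviation path $\log\Pi_k \approx \rho k$, and then $B_n$ (or the tail of the perpetuity beyond time $n-1$) supplies the final overshoot.

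The second step is the change of measure. Define the tilted law $\widehat\P$ under which $(A,B)$ has density proportional to $a^{\a}\,\P[A\in da, B\in db]$; this is legitimate by hypothesis \eqref{H2} (which guarantees $\lambda(\a)<\infty$ and $\lambda(\a+\eps)<\infty$) and normalizes because $\lambda(\a)<\infty$. Under $\widehat\P$ the random walk $S_k = \log\Pi_k$ has positive drift $\widehat\E[\log A] = \Lambda'(\a) = \rho > 0$, and a Radon–Nikodym computation converts $\P[\tau_u = n]$ into $\lambda(\a)^{n}\,\widehat\E\big[\Pi_n^{-\a}\,\mathbf 1_{\{\tau_u = n\}}\big]$ (the exact exponent bookkeeping is where the factor $\lambda(\a)^{-\Theta(u)}$ and $u^{-\ov\a}$ will emerge, since $\lambda(\a)^{n} = \lambda(\a)^{k_u}\lambda(\a)^{-\Theta(u)}$ and $\lambda(\a)^{k_u} = u^{\Lambda(\a)/\rho}= u^{\a - \ov\a}$ by \eqref{eq:onestar}, while the remaining $u^{-\a}$ comes from $\Pi_n^{-\a}\approx u^{-\a}$ on the relevant event). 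Under $\widehat\P$ the perpetuity $Y_n$ diverges, $Y_n/\Pi_{n-1}$ converges to a nondegenerate limit, and $\{\tau_u = n\}$ becomes, after dividing through by $\Pi_{n-1}\sim e^{\rho n}$, a local event about the centered random walk $S_{n-1} - \rho(n-1)$ of size $O(1/\sqrt{n})$; hypothesis \eqref{H1.3} provides the two-sided fluctuation needed so that this event is not asymptotically degenerate, and hypotheses \eqref{H1.6}, \eqref{H1.2} on the density of $A$ provide the smoothness (a local CLT / Stone-type local limit theorem for the lattice-free walk $S_k$) that upgrades the interval estimate of \cite{BCDZ} to a genuine one-point estimate with the Gaussian density producing the $1/\sqrt{\log u}$ factor.

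The third step is to make the "single big jump" decomposition rigorous and to control the error terms: one shows that the contribution of paths where two or more increments are large, or where the large jump occurs far from time $n$, is of smaller order, using \eqref{H2} to bound moments of $A$ and $|B|$ and \eqref{H1.4} to rule out an alternative large-deviation mechanism (this hypothesis ensures that the optimal tilting parameter is $\a$ rather than $1$, i.e.\ that $B$-tails and $A$-tails do not compete). Combining the local limit theorem for $S_{n-1}$ with the convergence of $Y_n/\Pi_{n-1}$ and a dominated-convergence argument to identify the constant $c(\a)$ yields \eqref{eq: aymp}. The main obstacle, I expect, is precisely this last passage: proving the local limit theorem uniformly enough — i.e.\ controlling $\widehat\P[\,S_{n-1}-\rho(n-1)\in \text{(interval of length }o(1/\sqrt n))\,,\ Y_n/\Pi_{n-1}\approx(\text{the right value})\,]$ with the correct constant — because it requires a joint local estimate for the pair $(S_{n-1}, Y_n/\Pi_{n-1})$ rather than for $S_{n-1}$ alone, and the second coordinate is a functional of the whole path. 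This is where the density assumptions \eqref{H1.6}–\eqref{H1.2} do the real work, smoothing out the distribution of $Y_n/\Pi_{n-1}$ so that the overshoot distribution at $\tau_u$ has a density and the pointwise asymptotic survives.
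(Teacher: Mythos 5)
Your high‑level plan is essentially the right one — tilt by $a^\alpha$, use the non‑lattice large‑deviation local theorem for $S_k=\log\Pi_k$ (the paper's actual tool is Petrov's theorem, Lemma~\ref{thm:3.1Petrov}, which is exactly a Cram\'er tilt dressed as a one‑line estimate), and invoke the density hypotheses \eqref{H1.6}--\eqref{H1.2} to smooth out the overshoot so that a genuine one‑point asymptotic survives. The exponent bookkeeping you do ($\lambda(\alpha)^{k_u}=u^{\alpha-\ov\alpha}$, the extra $\lambda(\alpha)^{-\Theta(u)}$, $\Pi_n^{-\alpha}\approx u^{-\alpha}$) matches what the paper produces. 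The role you assign to \eqref{H1.3} (non‑degeneracy of the positive‑probability path) and to \eqref{H1.4} (ruling out a competing large‑deviation mechanism from the $B$'s) is also correct in spirit.

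There is, however, a real gap at precisely the place you yourself flag as the main obstacle: you need a \emph{joint} local estimate for $\big(S_{n-1},\,Y_n/\Pi_{n-1}\big)$, you observe that the second coordinate is a functional of the whole path, and you stop. The paper's central structural move — which you do not find — is the decomposition $Y_{k_u} = Y_{k_u-L} + \Pi_{k_u-L}\,Y'_L$ at a \emph{large but fixed} time lag $L$, where $Y'_L$ is the perpetuity built from the last $L$ innovations. The point is that $(\Pi_{k_u-L},\,Y_{k_u-L})$ is \emph{independent} of $(M'_L,Y'_L,Y'_{L+1})$, and Lemma~\ref{lem: 333} plus \eqref{eq: wtss}--\eqref{eq: wtss'} show that both $Y_{k_u-L}$ and $M_{k_u-L}$ are negligible (of order $\eta^L u$) on the relevant event, uniformly in $u$. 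Once that is in place, the quantity $\wt P(u,\eps,\d,\g)$ in Step~4 of the proof factorizes: inside an integral over the law of the fixed‑length functionals $(M'_L,Y'_L,Y'_{L+1})$ one is left with a \emph{marginal} Petrov estimate for $\Pi_{k_u-L}$ alone, and the constant $c(\alpha)$ comes out as a finite $\widehat\E[(\cdot)_+]$ over those $L$‑step functionals, followed by $L\to\infty$. Your scheme, tilting and then trying to control $Y_n/\Pi_{n-1}$ as a path functional under $\widehat\P$, is not wrong in principle, but without this truncation‑and‑independence step you have not shown how to extract a sharp constant; the ``dominated convergence'' you invoke needs precisely the moment/tail estimates of Lemmas~\ref{lem: 33} and \ref{lem: 333} to be justified, and those lemmas in turn rely on \eqref{H1.4} in a more specific way than ``the optimal tilt is $\alpha$, not $1$'': they need exponents $\beta_1,\beta_2<1$ with $\lambda(\beta_i)<\lambda(\alpha)$, which is exactly what \eqref{H1.4} provides.

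A second, smaller, inaccuracy: you attribute the role of the density of $A$ to a Stone‑type local CLT for $S_k$. Petrov's theorem already holds for non‑lattice $\log A$ without any density, so that is not where \eqref{H1.6}--\eqref{H1.2} are needed. In the paper they are used in Proposition~\ref{prop: 6.1} and Lemma~\ref{cor: diffomega} quite differently: one conditions on $A_1$ and bounds $\P\big[e^n\le A_1<e^{n+1},\ A_1\in\tfrac1s(J_\eps-B_1)\big]$ by $C\eps e^n\min\{e^{-n},1\}^D$ using the pointwise density bound, and \eqref{H1.2} lets this decouple from the rest of the path. This is what gives the $\eps^{1-\sigma}$ factor in \eqref{62} and makes the one‑point estimate nondegenerate — a more hands‑on mechanism than the overshoot‑density statement you gesture at.

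In short: the strategy is sound and broadly parallel to the paper's, but the decisive ingredient (the $k_u-L$ split exploiting independence and the accompanying Lemmas~\ref{lem: 33}--\ref{lem: 333} controlling the ``past'') is missing, and the role of the density hypotheses is misidentified.
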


 The next result  shows that
 assumption \eqref{H1.4} in Theorem \ref{thm: main theorem} is indispensable and if
$\rho = \Lambda'(\a)$ is close to 0, the behavior of the passage time may be of different order.
\begin{thm}
\label{mthm2}
Given $\rho>0$ suppose there exists $\a < \a_\8$ such that $\rho = \Lambda'(\a)$ and hypothesis \eqref{H1}, \eqref{H2} are satisfied. Assume additionally
\begin{eqnarray}
\label{H2.5} && \Lambda(\a)<\Lambda(1);\\
\label{H2.1} && \mbox{$B>0$, a.s.;}
\end{eqnarray}
there are $0<b_1 < b_2$, a measure $\nu $ and a non vanishing continuous function $g_A$ such that
\begin{equation}
\label{H2.3} \P[A\in da, B_n\in {\bf 1}_{(b_1,b_2)}(b) db ] \ge g_A(a)da d\nu(b).
\end{equation}
Then there exists $\d>0$ such that
$$
\P\big[\tau_u=\lfloor \log u /\rho\rfloor \big]
 \ge  \frac{C}{\sqrt{\log u} } \, u^{-\ov \a}\, u^\d.
$$
\end{thm}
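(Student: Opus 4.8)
The plan is to prove the lower bound by exhibiting an explicit favourable event, built from two independent ``phases'', on which $\tau_u=k_u:=\lfloor \log u/\rho\rfloor$ and whose probability is already of order $u^{-\overline\alpha}u^{\delta}/\sqrt{\log u}$. In the interesting regime hypothesis \eqref{H1.4} fails, so $\alpha_{\min}>1$ and hence $\Lambda'(1)<0$; let $\beta_1>\alpha_{\min}$ be the second solution of $\Lambda(\beta_1)=\Lambda(1)$ (so $\beta_1>1$), and note that $\Lambda(\alpha)<\Lambda(1)$ and convexity of $\Lambda$ force $\alpha<\beta_1$. Put $m=\lfloor \log u/\Lambda'(\beta_1)\rfloor$, so $0<m<k_u$, and $N=k_u-m$. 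Because $B>0$ the sequence $Y_n$ is nondecreasing, so $\{\tau_u=k_u\}=\{Y_{k_u-1}\le u<Y_{k_u}\}$; writing $Y_{m+j}=Y_m+\Pi_m Z_j$, where $Z_j=\sum_{i=1}^{j}\Pi'_{i-1}B_{m+i}$ is a restarted perpetuity and $\Pi'_j=A_{m+1}\cdots A_{m+j}$ (with $\Pi'_0=1$), this reads $\{Z_{N-1}\le c<Z_N\}$ with $c:=(u-Y_m)/\Pi_m$. \emph{Phase~$1$} acts on steps $1,\dots,m$: tilt the law of $(A_i)_{i\le m}$ by the Cram\'er exponent $\beta_1$, under which $\log\Pi_n$ has drift $\Lambda'(\beta_1)=\log u/m$ and variance of order $\log u$. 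On $E_1:=\{\Pi_m\in[u/M,u]\}\cap\{Y_m\in[u/3,u/2]\}$ ($M$ a large fixed constant) a local limit theorem for $\log\Pi_m$, together with the absolutely continuous component from \eqref{H2.3} used to place $Y_m$ in its window ($Y_m$ being $\Pi_m$ times a convergent weighted sum of the $B_i$), gives tilted probability $\gtrsim(\log u)^{-1/2}$; since on $E_1$ the Radon--Nikodym density $\Pi_m^{\beta_1}\lambda(\beta_1)^{-m}\asymp u^{\beta_1}e^{-m\Lambda(1)}$, undoing the tilt yields $\P[E_1]\gtrsim(\log u)^{-1/2}\,u^{-\beta_1}e^{m\Lambda(1)}$, and on $E_1$ one has $c\in[c_1,c_2]$ for fixed $0<c_1<c_2<\infty$.

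\emph{Phase~$2$} acts on steps $m+1,\dots,k_u$, conditionally on $\mathcal F_m$: tilt the law of $(A_{m+i})_{i\le N-1}$ by the Cram\'er exponent $1$, so that $\log\Pi'_{N-1}$ has the negative drift $\Lambda'(1)$ and typically $\Pi'_{N-1}\asymp e^{(N-1)\Lambda'(1)}$ while $Z_{N-1}$ has settled to a value of order $1$. Let $E_2$ be the event that $\Pi'_{N-1}\asymp e^{(N-1)\Lambda'(1)}$, that $Z_{N-1}\in(c-b_1\Pi'_{N-1},\,c-\tfrac12 b_1\Pi'_{N-1})$, and that $B_{k_u}\in(b_1,b_2)$; then on $E_2$, $Z_{N-1}<c$ while $Z_N=Z_{N-1}+\Pi'_{N-1}B_{k_u}>Z_{N-1}+b_1\Pi'_{N-1}>c$, so $E_1\cap E_2\subset\{\tau_u=k_u\}$. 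The middle constraint is an interval of width $\asymp e^{(N-1)\Lambda'(1)}$ about $c$, and its probability is $\gtrsim e^{(N-1)\Lambda'(1)}$ by a lower bound --- uniform for $c\in[c_1,c_2]$ --- on the density of $Z_{N-1}$ near $c$ (which exists thanks to the absolutely continuous component of $A$ in \eqref{H2.3}); the local limit theorem contributes another $(\log u)^{-1/2}$, and undoing the exponent-$1$ tilt multiplies by $\lambda(1)^{N-1}(\Pi'_{N-1})^{-1}\asymp e^{(N-1)(\Lambda(1)-\Lambda'(1))}$. Thus $\P[E_2\mid\mathcal F_m]\gtrsim(\log u)^{-1/2}\,e^{(N-1)\Lambda(1)}$, uniformly on $E_1$.

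Since $E_1\in\mathcal F_m$ and $E_2$ depends only on the increments after time $m$, $\P[\tau_u=k_u]\ge\P[E_1]\,\inf_{E_1}\P[E_2\mid\mathcal F_m]\gtrsim(\log u)^{-1}u^{-\beta_1}e^{k_u\Lambda(1)}\asymp(\log u)^{-1}u^{-(\beta_1-\Lambda(1)/\rho)}$. Finally $\beta_1-\Lambda(1)/\rho<\overline\alpha=\alpha-\Lambda(\alpha)/\rho$: using $\rho=\Lambda'(\alpha)$ and $\Lambda(1)=\Lambda(\beta_1)$ this is equivalent to $\Lambda(\alpha)+\Lambda'(\alpha)(\beta_1-\alpha)<\Lambda(\beta_1)$, i.e.\ to the tangent line of $\Lambda$ at $\alpha$ lying strictly below $\Lambda$ at the point $\beta_1$, which holds by strict convexity of $\Lambda$ since $\beta_1\neq\alpha$; and $\beta_1\neq\alpha$ is precisely what the \emph{strict} inequality $\Lambda(\alpha)<\Lambda(1)$ in \eqref{H2.5} secures. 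Taking $\delta=\tfrac12\bigl(\overline\alpha-\beta_1+\Lambda(1)/\rho\bigr)>0$ absorbs the $(\log u)^{-1}$ factor for $u$ large and proves the bound.

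The main obstacle is the pair of density lower bounds, which must be uniform in $u$ and over the conditioning coming from the tilted walks: one for $Y_m$ near $u/3$ in Phase~$1$, and --- more delicate, since the target window there has the shrinking width $e^{(N-1)\Lambda'(1)}$ --- one for the restarted partial perpetuity $Z_{N-1}$ near a fixed level in Phase~$2$. Both are obtained by isolating the absolutely continuous component furnished by \eqref{H2.3} and pushing it through the one-step identity $Z_{N-1}=B_{m+1}+A_{m+1}(\,\cdot\,)$ and its tilted analogue, while simultaneously pinning $\log\Pi_m$ and $\log\Pi'_{N-1}$ with local limit theorems; making all of this quantitative and uniform is the technical core of the argument.
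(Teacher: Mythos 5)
Your two-phase construction is in essence the argument the paper uses to prove Theorem \ref{mthm2}: tilt by a Cram\'er exponent $>\alpha$ over a first block tuned so that $\Pi$ reaches scale $u$, tilt by $1$ over the remaining block of length forced by $k_u$, fine-tune a single $A$-increment via the density hypothesis \eqref{H2.3}, and read off the exponent gain over $u^{-\ov\a}$ from strict convexity of $\Lambda$. There is, however, a genuine gap in your choice of the Phase~1 tilt. You fix $\beta_1$ to be the second solution of $\Lambda(\beta_1)=\Lambda(1)$ beyond $\a_{\min}$, but nothing in \eqref{H1}, \eqref{H2}, \eqref{H2.5}, \eqref{H2.1}, \eqref{H2.3} guarantees that such a $\beta_1$ exists in $[0,\a_\8)$: hypothesis \eqref{H2} only controls $\lambda$ up to $\alpha+\eps$, and one can perfectly well have $\Lambda(s)<\Lambda(1)$ for all $s<\a_\8$. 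Even when $\beta_1<\a_\8$, the local-limit and Chebyshev arguments underlying your Phase~1 bound (the analogues of Lemmas \ref{lem: 5.9} and \ref{lem: s}) need finite moments of $A$ and $B$ slightly \emph{above} $\beta_1$, which \eqref{H2} does not supply once $\beta_1>\alpha+\eps$. The paper avoids both problems by taking a generic $\beta$ strictly between $\alpha$ and $\alpha+\eps$, splitting via $\Lambda'(\beta)=\Lambda'(\alpha)/p$, and then showing $(\lambda(\beta)/\lambda(\alpha))^n(\lambda(1)/\lambda(\alpha))^m\ge u^{\beta-\alpha}u^\delta$ for $\beta-\alpha$ small; this two-term accounting is slightly less elegant than your tangent-line inequality at the fixed point $\beta_1$, but it works under exactly the stated hypotheses. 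Your argument can be rescued by adopting this generic $\beta$, but as written it proves the theorem only under the extra unstated assumptions $\beta_1<\a_\8$ and $\E A^{\beta_1+\eps'},\E|B|^{\beta_1+\eps'}<\8$.

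A secondary, more structural concern is where you place the shrinking-window constraint. The paper isolates a single pivot variable $A_n$ sitting \emph{between} the two blocks: $\Omega$ constrains $(\Pi_{n-1},Y_{n-1})$ on the left and $(\Pi'_{m-1},Y'_m,B'_m)$ on the right with only $O(1)$-multiplicative or $O(1)$-additive windows, and then $A_n$ alone is confined to an interval of length $\asymp\eps=e^{m\Lambda'(1)}$, so the density hypothesis \eqref{H2.3} is invoked only for $A$ itself on a fixed compact interval $(C_-,C_+)$. Your $E_2$ instead asks the restarted perpetuity $Z_{N-1}$, which involves $A_{m+1}$, to land in a window of shrinking width $\asymp e^{(N-1)\Lambda'(1)}$ while simultaneously pinning $\log\Pi'_{N-1}$ --- another function of $A_{m+1}$ --- by a local limit theorem. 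You flag this coupling as the ``technical core'', but as stated the required uniform lower bound on the density of $Z_{N-1}$ near a fixed level, conditionally on the $\Pi'$-constraint, is not justified; and it is precisely the delicacy the paper's inter-block pivot is designed to eliminate. Separating out one $A$-increment as the fine-tuning variable and keeping both blocks to $O(1)$-windows makes the density step a one-line application of \eqref{H2.3}.
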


\subsection{Some comments}

Study of $\tau _u$ is partly motivated
by the work of Lalley \cite{Laley}, who considered this problem for the negatively drifted  random walk $S_n = \log \Pi_n$. Let $\tau'_u = \inf\{n: S_n > \log u\}$.
Lalley proved the central limit theorem (as in \eqref{eq:clt}) and described large deviations: $\P(\tau'_u < \log u/\rho)$ for $\rho > \rho_0$ and
$\P(\tau'_u > \log u/\rho)$ for $\rho < \rho_0$.  Recently Buraczewski and Ma\'slanka \cite{BM} essentially improved his results applying the techniques introduced in the present paper. Pointwise estimates of $\tau'_u$ analogous to \eqref{eq: aymp} were obtained under hypotheses \eqref{H1} and \eqref{H2} only.

 The general shape of \eqref{H2.3} matches with the
 previous results \cite{BCDZ} but now $\tau_u$ is much better localized. Moreover, contrary to \cite{BCDZ}, Theorem \ref{thm: main theorem} is valid for all $\rho>0$. But there is a  price to pay:  considerably stronger assumptions than those of~\cite{BCDZ}:
\begin{itemize}
\item Hypothesis \eqref{H1.4} indicates the optimal set of indices. Indeed, if $\a _{\min} \leq 1$ then \eqref{eq: aymp} holds for every $\rho>0$ and $\a > \a _{\min} $. If not, we require the condition \eqref{H2.5},
which is well justified by Theorem \ref{mthm2} that provides a class of appropriate counterexamples to \eqref{eq: aymp}.

If $\a_{\min} > 1$ then there is $1<\tilde \a < \a _0$ such that $\Lambda (1)=\Lambda (\tilde \a )$,  \eqref{H2.5} is satisfied for $\a >\tilde \a $, and we have \eqref{eq: aymp} for $\rho = \Lambda'(\a) > \Lambda'(\tilde \a)$. If $\rho = \Lambda'(\a) < \Lambda'(\tilde \a)$ the asymptotic may be of different order as we can see in Theorem \ref{mthm2}.

     \item Assumption \eqref{H1.3} is needed to ensure that the  processes $Y_n$  exceeds with positive probability the level $u$ for an arbitrary large $u$
     and this is the weakest assumption implying that (see \cite{BDM}, Proposition 2.5.4). Then, as explained in \cite{BD}, the constants $c_0$ in \eqref{eq: twostar} and $c(\a)$ in \eqref{eq: aymp} are strictly positive.

          \item Assumptions \eqref{H1.6} and \eqref{H1.2} are technical. The strategy of the proof requires that $\mu $ - the joint law of $(A,B)$ is dominated by a product {law and that the distribution} of $A$ has a density that decays properly at $+\infty $.
\item  The function $\Theta(u)$ in \eqref{eq: aymp} is  a correction term, which is needed because $\tau_u$ is  discrete, whereas $k_u$ depends on $u$ in a continuous way. Nevertheless, to simplify the notation, we  avoid writing the integer part and  below $k_u$ always denotes an integer number.

\item The assumption \eqref{H2.3} on $g_A$  can be improved to:   $g_A da$ is a measure containing non-trivial absolutely continuous part. We comment more on it
in Remark \ref{rem: 8.1}  at the end of Section \ref{sec: thm2}. 

\end{itemize}

To prove the main results we analyze  path properties of perpetuity sequence $Y_n$. This method, although technically quite involved, is ultimately very rewarding and finally it provides a much deeper insight into the process. This strategy has been used by  Enriquez et al. \cite{ESZ} and Collamore, Vidyashankar \cite{CV} to obtain an explicit formula for the  limiting constant $c_+$ in \eqref{eq: twostarrrr} and also by Buraczewski et al. \cite{BDMZ} to prove large deviations results.

\subsection{Acknowledgements} We thank the referees for useful remarks which helped us to improve the previous version of the paper.

\section{Large deviations for random walks}

 To analyze the behavior of the random walk $\{ \Pi_n \}$,
the following uniform large deviation theorem, due to Petrov \cite{Petrov}, Theorem 2, will play a key role (see also the Bahadur-Rao theorem in \cite{DZ}).

\begin{lem}[Petrov]
\label{thm:3.1Petrov} Assume that the law of $\log A$ is nonlattice and
 that $c$ satisfies $ {\mathbb E} \left[ \log A \right] < c < s_0$, and suppose that $\delta(n)$ is an arbitrary function satisfying  $\lim_{n \to \infty} \delta(n) = 0$.
Then with 
$\alpha$ chosen such that $\Lambda '(\alpha )=c$, we have that
\begin{align*} 
{\mathbb P} \big\{ \Pi_n > &\ e^{n(c + \gamma_n)} \big\}\\
&  = \frac{1}{\alpha\sigma(\alpha) \sqrt{2\pi n}} \exp\left\{-n \Big( \alpha(c+\gamma_n) - \Lambda(\alpha) + \frac{\gamma_n^2}{2\sigma^2(\alpha)}
  \left(1 + O(|\gamma_n| \right) \Big) \right\} (1+o(1))
\end{align*}
as $n \to \infty$,
uniformly with respect to $c$ and $\gamma_n$ in the range
\begin{equation} \label{petrov-2}
{\mathbb E} \left[ \log \,A \right] + \epsilon \le c \le s_0 - \epsilon \quad \text{\rm and} \quad |\gamma_n| \le \delta(n),
\end{equation}
where $\epsilon >0$.
\end{lem}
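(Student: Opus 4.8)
The plan is to prove this sharp large deviation estimate by the Cram\'er exponential change of measure combined with a uniform local limit theorem, i.e.\ the Bahadur--Rao method. Write $S_n=\log\Pi_n=\sum_{i=1}^n X_i$ with $X_i=\log A_i$ i.i.d., so that $\P\{\Pi_n>e^{n(c+\gamma_n)}\}=\P\{S_n>n(c+\gamma_n)\}$. For every $n$ I would pick the tilting parameter $h=h_n$ determined by $\Lambda'(h_n)=c+\gamma_n$; since $\Lambda'$ is strictly increasing with $\Lambda'(0)=\E\log A$, the range conditions $\E\log A+\epsilon\le c\le s_0-\epsilon$ and $|\gamma_n|\le\delta(n)\to0$ ensure, for $n$ large, that $h_n$ exists and that $h_n$, together with $\alpha$ (the solution of $\Lambda'(\alpha)=c$), stays in a fixed compact subset $K$ of the interior of the domain of $\Lambda$, bounded away from both endpoints uniformly in $c$ and $\gamma_n$. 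As $\Lambda$ is smooth and strictly convex on $K$, Taylor's theorem gives $h_n=\alpha+\gamma_n/\sigma^2(\alpha)+O(\gamma_n^2)$ uniformly over $K$, where $\sigma^2(\alpha):=\Lambda''(\alpha)$ is the per-step variance under the $\alpha$-tilt.

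Then I would perform the change of measure $\frac{d\wt\P_h}{d\P}=\prod_{i=1}^n e^{hX_i-\Lambda(h)}$. Under $\wt\P_h$ the $X_i$ are i.i.d.\ with mean $\Lambda'(h)=c+\gamma_n$ and variance $\sigma^2(h)=\Lambda''(h)$, so $W_n:=S_n-n(c+\gamma_n)$ is a centered random walk of variance $n\sigma^2(h)$, and the tilting identity reads
\[
\P\{S_n>n(c+\gamma_n)\}=e^{\,n(\Lambda(h)-h(c+\gamma_n))}\,\E_{\wt\P_h}\bigl[e^{-hW_n}\mathbf 1_{\{W_n>0\}}\bigr].
\]
The exponential prefactor is purely analytic: substituting $h=\alpha+\gamma_n/\sigma^2(\alpha)+O(\gamma_n^2)$ and expanding $\Lambda$ to second order around $\alpha$ (using $\Lambda'(\alpha)=c$, $\Lambda''(\alpha)=\sigma^2(\alpha)$) one obtains, uniformly over $K$,
\[
\Lambda(h)-h(c+\gamma_n)=-\Bigl(\alpha(c+\gamma_n)-\Lambda(\alpha)+\tfrac{\gamma_n^2}{2\sigma^2(\alpha)}\bigl(1+O(|\gamma_n|)\bigr)\Bigr),
\]
which is precisely the exponent in the statement.

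It remains to show $\E_{\wt\P_h}[e^{-hW_n}\mathbf 1_{\{W_n>0\}}]=\frac{1+o(1)}{h\sigma(h)\sqrt{2\pi n}}$ uniformly, and then to note that $h\sigma(h)=\alpha\sigma(\alpha)\bigl(1+O(\gamma_n)\bigr)$, so the constant may be replaced by $1/(\alpha\sigma(\alpha)\sqrt{2\pi n})$ at the cost of another $(1+o(1))$. Writing the expectation as $\int_{(0,\infty)}e^{-hr}\,dF_n(r)$ with $F_n$ the law of $W_n$ under $\wt\P_h$, the weight $e^{-hr}$ concentrates the integral at scale $r=O(1)$, where Stone's local limit theorem for nonlattice laws gives $\wt\P_h\{W_n\in(r,r+\Delta]\}=\frac{\Delta}{\sigma(h)\sqrt{2\pi n}}(1+o(1))$; hence $\int_{(0,\infty)}e^{-hr}\,dF_n(r)\sim\frac{1}{\sigma(h)\sqrt{2\pi n}}\int_0^\infty e^{-hr}\,dr=\frac{1}{h\sigma(h)\sqrt{2\pi n}}$, the tail $r\gg1$ being absorbed by crude bounds using uniform exponential moments of $X$ over $K$. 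The hard part will be making this local limit estimate \emph{uniform} over the whole family $\{\wt\P_h:h\in K\}$ and over the relevant range of $r$: near frequency zero one uses uniform Taylor control of the tilted characteristic functions $\wt\varphi_h(\xi)=\E_{\wt\P_h}[e^{i\xi X}]$ (uniform third-moment bounds on $K$), while the decisive input is that the \emph{nonlattice} assumption on $\log A$, together with compactness of $K$ and continuity of $h\mapsto\wt\varphi_h$, forces $\sup_{h\in K}\sup_{\theta_0\le|\xi|\le T}|\wt\varphi_h(\xi)|<1$ for every $0<\theta_0<T$; high frequencies are then controlled via the exponential-moment bound. A final routine point is that tilting by the exact parameter $h_n$ (rather than by $\alpha$) is what renders $W_n$ genuinely centered, which is what allows the local limit theorem to apply even when $\gamma_n$ is far larger than $n^{-1/2}$.
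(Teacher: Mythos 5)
The paper does not prove this lemma at all --- it is recalled verbatim from Petrov (1965, Theorem~2), with a parenthetical pointer to the Bahadur--Rao theorem in Dembo--Zeitouni, so there is no internal proof to compare against. Your reconstruction is correct and is in fact the standard Bahadur--Rao argument that underlies Petrov's original proof: tilt by the exact parameter $h_n$ solving $\Lambda'(h_n)=c+\gamma_n$ so that $W_n=S_n-n(c+\gamma_n)$ is genuinely centered under $\widetilde\P_{h_n}$, read off the exponential prefactor by Taylor-expanding $\Lambda(h_n)-h_n(c+\gamma_n)$ about $\alpha$ (your algebra giving $-\gamma_n^2/(2\sigma^2(\alpha))(1+O(|\gamma_n|))$ checks out), evaluate $\E_{\widetilde\P_h}\bigl[e^{-hW_n}\mathbf 1_{\{W_n>0\}}\bigr]\sim (h\sigma(h)\sqrt{2\pi n})^{-1}$ via a nonlattice local limit theorem, and replace $h\sigma(h)$ by $\alpha\sigma(\alpha)(1+o(1))$. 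Your remarks on uniformity (compactness of the tilting parameter range, joint continuity of the tilted characteristic functions, and the strict bound $\sup_{h\in K}\sup_{\theta_0\le|\xi|\le T}|\widetilde\varphi_h(\xi)|<1$ inherited from the nonlattice hypothesis, which is tilt-invariant since tilting preserves support) are exactly the points that make the estimate uniform in $c$ and $\gamma_n$, which is the whole content of Petrov's refinement over the pointwise Bahadur--Rao statement. The only place you are a bit terse is the absorption of the $r\gg1$ tail of $\int_0^\infty e^{-hr}\,dF_n(r)$; that step is routine given the uniform exponential moments over $K$, and a full write-up would spell out the splitting, but the idea is right.
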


\begin{rem}{\rm
In \eqref{petrov-2}, we may have that $s_0= \infty$ or ${\mathbb E} \left[ \log \,A \right] =-\infty$.  In these cases, the quantities
$\infty -\epsilon$ or  $-\infty -\epsilon$ should be interpreted as arbitrary positive, respectively negative, constants.}
\end{rem}

In fact we will also use some refinements of the last result

\begin{lem}
\label{lem: petrov2.0}
Under the hypotheses of the previous theorem assume additionally that $\d_n,j_n$ satisfy
\begin{equation}
\label{eq: dn}
\max\big\{ \sqrt n|\d_n|,  j_n/\sqrt n\big\} \le \d (n).
\end{equation}
Then
$$\P\big[  \Pi_{n-j_n} \ge t e^{n\d_n} \big] = \frac 1{\a \s(\a)\sqrt{2\pi n}} \cdot t^{-\ov \a} e^{- \a n \d_n} e^{-j_n \Lambda(\a)}
(1+o_1(1))
\qquad \mbox{ as } n\to\8
$$
uniform for all $\Lambda'(\a)$ satisfying  $\E \log A_1 + c_1 \le \Lambda'(\a) \le c_2$, $c_1, c_2>0$, $t=\exp (n\Lambda'(\a))$ and for all $\d_n,  j_n$ satisfying \eqref{eq: dn}.
\end{lem}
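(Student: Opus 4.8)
The plan is to deduce Lemma \ref{lem: petrov2.0} from Petrov's theorem (Lemma \ref{thm:3.1Petrov}) by an exact change of the time index and of the threshold, followed by a careful expansion of the resulting exponent. Set $m := n - j_n$; since \eqref{eq: dn} forces $j_n \le \sqrt{n}\,\d(n) = o(n)$, we have $m\to\8$ and $m/n\to 1$. With $t = e^{n\Lambda'(\a)}$ the event $\{\Pi_{n-j_n} \ge t\,e^{n\d_n}\}$ coincides with $\{\Pi_m \ge e^{m(c+\gamma_m)}\}$ for
\[
 c := \Lambda'(\a), \qquad \gamma_m := \frac{j_n\Lambda'(\a) + n\d_n}{n - j_n},
\]
because $mc + j_n\Lambda'(\a) = n\Lambda'(\a) = \log t$, hence $m(c+\gamma_m) = \log t + n\d_n$.

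Next I would check that $(c,\gamma_m,m)$ lies in Petrov's admissible range, uniformly over all data allowed in the statement. The value $c = \Lambda'(\a)$ runs over the fixed compact interval $[\E\log A + c_1, c_2]$, which under the standing hypotheses is a compact subset of the open interval on which $\Lambda$ is finite and strictly convex; on it $\a$, $\s(\a)=\sqrt{\Lambda''(\a)}$ and $\Lambda(\a)$ stay bounded and $\s(\a),\a$ are bounded away from $0$. From \eqref{eq: dn} one gets $|j_n\Lambda'(\a) + n\d_n| \le \sqrt{n}\,\d(n)\,(|c|+1) = O(\sqrt n\,\d(n))$ with an absolute constant, whence $|\gamma_m| \le C\,\d(n)/\sqrt n$; since $m\le n$ this is dominated by the sequence $\d^*(m) := \sup_{l\ge m} C\,\d(l)/\sqrt l \to 0$, so Lemma \ref{thm:3.1Petrov} applies with error function $\d^*$.

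Then I would plug in and expand. Petrov gives
\[
\P\big[\Pi_m \ge e^{m(c+\gamma_m)}\big] = \frac{1+o(1)}{\a\s(\a)\sqrt{2\pi m}}\exp\Big\{-m\a(c+\gamma_m) + m\Lambda(\a) - \tfrac{m\gamma_m^2}{2\s^2(\a)}\big(1 + O(|\gamma_m|)\big)\Big\}.
\]
Using $m(c+\gamma_m) = \log t + n\d_n$, $m = n - j_n$, $\log t = n\Lambda'(\a)$ and the identity \eqref{eq:onestar} in the form $\a\Lambda'(\a) - \Lambda(\a) = \ov\a\,\Lambda'(\a)$, the first two terms of the exponent become
\[
-\a\log t - \a n\d_n + n\Lambda(\a) - j_n\Lambda(\a) = -\ov\a\log t - \a n\d_n - j_n\Lambda(\a),
\]
so the exponential factors as $t^{-\ov\a}\,e^{-\a n\d_n}\,e^{-j_n\Lambda(\a)}\,e^{-\frac{m\gamma_m^2}{2\s^2(\a)}(1+O(|\gamma_m|))}$. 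It then remains to absorb the last two nuisance factors: $m\gamma_m^2 = (j_n\Lambda'(\a)+n\d_n)^2/(n-j_n) = O(\d(n)^2) = o(1)$, so that exponential is $1+o(1)$, and $\sqrt{m/n} = 1+o(1)$; since every $O$- and $o$-term produced along the way depends only on $\d(\cdot)$, on $c_1,c_2$ and on the compact $\a$-range, they are uniform, and collecting them yields the asserted asymptotics with a single error term $o_1(1)$.

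Finally, the passage from the strict inequality in Petrov's theorem to the weak inequality $\ge$ in the statement is harmless: since $\log A$ is nonlattice one either discards the atom $\P[\Pi_m = t e^{n\d_n}]$ or sandwiches $\{\Pi_m \ge t e^{n\d_n}\}$ between $\{\Pi_m > t e^{n\d_n}\}$ and $\{\Pi_m > t e^{n\d_n}(1-\eta_n)\}$ with $\eta_n\downarrow 0$ slowly and invokes the uniformity of Lemma \ref{thm:3.1Petrov}. The only genuine difficulty is bookkeeping: one must verify that the change of variables $(\d_n, j_n, \a) \mapsto (c,\gamma_m,m)$ truly preserves the uniformity of Petrov's error terms over the whole set \eqref{eq: dn}, i.e. that $|\gamma_m|$ admits a majorant $\d^*(m)\to 0$ valid for all admissible data; once that is in place the algebraic reduction from $\a$ to $\ov\a$ via \eqref{eq:onestar} and the estimate $m\gamma_m^2 = o(1)$ are routine.
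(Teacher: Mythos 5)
Your proof is correct and follows essentially the same route as the paper: set $\bar n = n - j_n$, absorb both $j_n$ and $\d_n$ into a new deviation parameter $\gamma_{\bar n} = (j_n\Lambda'(\a) + n\d_n)/\bar n$ (the paper's $\d'_n$), invoke Petrov's Lemma~\ref{thm:3.1Petrov} at scale $\bar n$, and use the identity $\ov\a\,\Lambda'(\a) = \a\Lambda'(\a) - \Lambda(\a)$ to rewrite the exponent. You spell out a few points the paper leaves implicit, notably the uniform majorant $\d^*(m)$ for $|\gamma_m|$, the $\sqrt{m/n}=1+o(1)$ correction in the prefactor, and the passage from strict to weak inequality, but the argument is the same.
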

\begin{proof}
Let $\bar n=(n-j_n)$ and $\rho =\Lambda '(\a )$. We write $te^{n\d _n}=e^{n\rho + n\d _n}$ and $n\rho + n\d _n=(n-j_n)\rho +j_n\rho + n\d _n= \bar n\rho +\bar n (\frac{j_n\rho }{\bar n} + \frac{n\d _n}{\bar n})$. Now we may apply Lemma \ref{thm:3.1Petrov} with $\d '_n=\frac{j_n\rho }{\bar n} + \frac{n\d _n}{\bar n}$ playing the role of $\gamma _n$ and $\bar n$ playing the role of $n$. Clearly then
$$
\bar n \bigg (\frac{j_n\rho }{\bar n} + \frac{n\d _n}{\bar n}\bigg )^2=O( \d (n)).$$
Hence we may write
\begin{align*}
\P\big[  A_1..A_{n-j_n} \ge t e^{n\d_n} \big] &= \frac 1{\a \s(\a)\sqrt{2\pi n}} \cdot e^{-\rho \bar n\ov \a} e^{- \a \bar n \d '_n} (1+o_1(1))\\
&=\frac 1{\a \s(\a)\sqrt{2\pi n}} \cdot e^{-\rho n\ov \a} e^{- \a n \d _n}e^{-j_n\Lambda (\a )} (1+o_1(1))
\end{align*}
which proves the lemma.
\end{proof}

\section{Lower estimates in Theorem \ref{thm: main theorem}}

In this section we prove lower estimates. Our main result is the following.
\begin{prop} Assume \eqref{H1}, \eqref{H2} and \eqref{H1.3}  
are satisfied, then there is $\eta >0$ such that
\label{lem: lower estimates large n}
$$
 \P[\tau_u =  k_u  +1]  = \P\big[ M_{k_u}<u \mbox{ and \ } Y_{k_u+1}>u \big] \ge \frac{\eta}{\sqrt{\log u} }\ u^{-\ov \a}
$$ for $k_u = \frac{\log u}{\Lambda'(\a)}$ and sufficiently large $u$, where $M_n$ denotes the sequence of maxima of $Y_n$:
$$
M_n = \max\{0, Y_1, \ldots, Y_n\}.
$$
\end{prop}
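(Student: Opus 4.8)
\emph{Step 1 (removing the last step).} Since $B_{k_u+1}$ is independent of $\F_{k_u}:=\s\{(A_j,B_j):j\le k_u\}$ and $Y_{k_u+1}=Y_{k_u}+\Pi_{k_u}B_{k_u+1}$ with $\Pi_{k_u}>0$, conditioning on $\F_{k_u}$ gives, with $\ov F(x):=\P[B>x]$,
\[
\P[\tau_u=k_u+1]=\P\big[M_{k_u}<u,\ Y_{k_u+1}>u\big]=\E\Big[\Ind{\{M_{k_u}<u\}}\,\ov F\big((u-Y_{k_u})/\Pi_{k_u}\big)\Big].
\]
Hypothesis \eqref{H1.3} forces $\operatorname{ess\,sup}B>0$ (if $b_2\le0$ then $b_2/(1-a_2)\ge0$, so $b_1/(1-a_1)>0$ and $b_1>0$; otherwise $b_2>0$), so fix $x_0\in(0,\operatorname{ess\,sup}B)$ with $q_0:=\ov F(x_0)>0$. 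On $\{Y_{k_u}\ge u-x_0\Pi_{k_u}\}$ the argument of $\ov F$ is at most $x_0$, hence
\[
\P[\tau_u=k_u+1]\ \ge\ q_0\,\P\big[M_{k_u}<u,\ Y_{k_u}\ge u-x_0\Pi_{k_u}\big].
\]

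\emph{Step 2 (exponential change of measure).} Let $\P_\a$ be the law under which $(A_1,\dots,A_{k_u})$ has density $\Pi_{k_u}^{\,\a}/\l(\a)^{k_u}$ with respect to $\P$ (the conditional laws of $B_1,\dots,B_{k_u}$ unchanged); this is legitimate by \eqref{H2}. Under $\P_\a$ the variables $\log A_j$ are i.i.d.\ with mean $\rho=\L'(\a)>0$ and variance $\s^2:=\L''(\a)>0$ (non-degenerate, as $a_1\ne a_2$), and $\E_\a\log^+|B|<\8$ by H\"older and \eqref{H2}. Using $\l(\a)^{k_u}=u^{\L(\a)/\rho}$ and $\ov\a=\a-\L(\a)/\rho$, on $\{u\le\Pi_{k_u}\le2u\}$ one has $d\P/d\P_\a=\l(\a)^{k_u}\Pi_{k_u}^{-\a}\ge2^{-\a}u^{-\ov\a}$. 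Thus it suffices (then $\eta=2^{-\a}q_0c$) to prove that for some $c>0$
\begin{equation}\label{eq:reduced}
\P_\a\big[\Pi_{k_u}\in[u,2u],\ M_{k_u}<u,\ Y_{k_u}\ge u-x_0\Pi_{k_u}\big]\ \ge\ \frac{c}{\sqrt{\log u}},\qquad u\to\8.
\end{equation}

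\emph{Step 3 (building the event under $\P_\a$).} Write $Y_j=\Pi_{j-1}W_j$ with the backward perpetuity $W_j=B_j+W_{j-1}/A_{j-1}$; since $\E_\a\log(1/A)=-\rho<0$, the sequence $\{W_j\}$ is tight and converges $\P_\a$-a.s., so $\Sigma:=\sup_{j\ge1}W_j^+<\8$ a.s. Fix $N$ with $\P_\a[\Sigma\le N]\ge\tfrac12$, then $\th\in(0,\min(1,x_0))$, and finally a large integer $R$ with $R\rho>\log(2N)+10$. The plan is to intersect three nearly independent pieces. $(i)$ A \emph{corridor} on the first $k_u-R-1$ steps: $\Pi_{k_u-R-1}\in[u\e^{-R\rho},(1+\th)u\e^{-R\rho}]$ together with $\max_{j\le k_u-R-1}\Pi_j\le u/(2N)$; under $\P_\a$ the walk $\log\Pi_j$ has drift $\rho$ and its time-$(k_u-R-1)$ value sits $O(1)$ below $\log u-R\rho$, so the first constraint has $\P_\a$-probability $\asymp(\log u)^{-1/2}$ by the local central limit theorem, and, conditionally on it, the running-maximum constraint has probability bounded below (a standard one-sided fluctuation estimate, the slack being $R\rho-\log(2N)>10$); alternatively the factor $(\log u)^{-1/2}u^{-\ov\a}$ is obtained directly from Lemma~\ref{lem: petrov2.0} applied to $\{\Pi_{k_u-R-1}\ge u\e^{-R\rho}\}$ minus $\{\Pi_{k_u-R-1}\ge(1+\th)u\e^{-R\rho}\}$, before the change of measure. $(ii)$ The event $\{\Sigma\le N\}$, under which $Y_j=\Pi_{j-1}W_j\le(u/(2N))\cdot N<u$ for every $j\le k_u-R-1$. $(iii)$ A positive-probability event on the last $R+1$ coordinates $(A_j,B_j)_{k_u-R\le j\le k_u}$ on which the $A_j$ carry $\Pi$ from $\approx u\e^{-R\rho}$ up to $\Pi_{k_u}\in[u,(1+\th)u]$ with $A_{k_u}$ in a fixed compact subset of $(1,\8)$, while the $B_j$ are chosen so that $W_j<u/\Pi_{j-1}$ for all $k_u-R\le j\le k_u$ (hence $Y_j<u$) and moreover $W_{k_u}\in\big[(1-x_0(1+\th))^+A_{k_u},\,A_{k_u}/(1+\th)\big)$ — a nonempty interval because $\th<x_0$. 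Since $\Pi_{k_u-1}=\Pi_{k_u}/A_{k_u}\in[u/A_{k_u},(1+\th)u/A_{k_u}]$ and $W_{k_u}\ge0$, this last membership forces $u(1-x_0(1+\th))\le Y_{k_u}<u$, i.e.\ simultaneously $M_{k_u}<u$ and $Y_{k_u}\ge u-x_0\Pi_{k_u}$. Multiplying the probabilities of $(i)$--$(iii)$ yields \eqref{eq:reduced}.

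\emph{The main obstacle} is piece $(iii)$: one must show that a block of $(A_j,B_j)$ drawn from prescribed neighbourhoods in $\supp\mu$ can, with probability bounded below \emph{uniformly} in the conditioned entry data (after rescaling $\Pi$ by $u$ this data ranges over a fixed compact set), both carry $\Pi$ into the target window and push $W_{k_u}$ into the prescribed interval — with the complication that the law of $W$ need not be absolutely continuous, since $A$ is not assumed to have a density in this Proposition. This is exactly the mechanism responsible for $\operatorname{ess\,sup}Y=+\8$: hypothesis \eqref{H1.3} says $y_2:=b_2/(1-a_2)<b_1/(1-a_1)=:y_1$, so steps near $(a_1,b_1)$ steer the perpetuity towards $y_1>y_2$ and subsequent steps near $(a_2,b_2)$, which repel from the fixed point $y_2$, push it upward through any prescribed level; quantitatively this is the content of \cite[Prop.~2.5.4]{BDM}, which I would recast into the uniform lower bound for the block needed above. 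The corridor estimate in $(i)$ — that conditioning the positive-drift walk $\log\Pi_j$ to end in an $O(1)$ window below $\log u$ leaves positive probability of no earlier overshoot of $\log(u/(2N))$ — is a routine fluctuation estimate for random walks with positive drift.
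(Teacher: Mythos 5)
Your Steps~1 and 2 (conditioning on $\F_{k_u}$ to strip off $B_{k_u+1}$, then tilting $A_1,\ldots,A_{k_u}$ by $\Pi_{k_u}^{\a}/\l(\a)^{k_u}$ and using $d\P/d\P_\a \ge 2^{-\a}u^{-\ov\a}$ on $\{\Pi_{k_u}\in[u,2u]\}$) are clean and correct, and they do locate the $u^{-\ov\a}$ factor. Step~3, however, has two problems, one fatal as written and one acknowledged.

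The fatal one is in piece $(ii)$. You set $W_j := Y_j/\Pi_{j-1}$, which satisfies $W_j = B_j + W_{j-1}/A_{j-1}$, and you assert that under $\P_\a$ (where $\E_\a\log(1/A)=-\rho<0$) the sequence $\{W_j\}$ \emph{converges a.s.}, hence $\Sigma := \sup_{j\ge 1}W_j^+ < \infty$ a.s., and then fix $N$ with $\P_\a[\Sigma\le N]\ge 1/2$. This is false. $W_n=\sum_{k=1}^n B_k/(A_k\cdots A_{n-1})$ does not converge a.s.: it is a positive-recurrent Markov chain whose law stabilizes, but the trajectory keeps excursioning, and unless its stationary law has bounded support one has $\sup_{j\ge 1}W_j^+=\infty$ a.s. (What converges a.s.\ is $Y_j=\Pi_{j-1}W_j$, not $W_j$.) Concretely $\P_\a[\max_{j\le k_u}W_j^+\le N]\to 0$ as $u\to\infty$ for any fixed $N$, so your construction forces $N=N(u)\to\infty$, hence $R=R(u)\to\infty$, and the block in $(iii)$ then contributes a vanishing factor. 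The whole scheme of ``bounded $W_j$ times $\Pi_j\le u/(2N)$'' cannot be carried out with constants independent of $u$. Piece $(iii)$ is the gap you flagged yourself: you need a uniform-in-entry-data lower bound for the terminal block, and you only gesture at recasting \cite[Prop.~2.5.4]{BDM}. (There is also a small slip in the target window for $W_{k_u}$: taking the intersection over $\Pi_{k_u-1}\in[u/A_{k_u},(1+\th)u/A_{k_u}]$ gives $[(1-x_0)A_{k_u}, A_{k_u}/(1+\th))$, nonempty iff $\th < x_0/(1-x_0)$, not your $(1-x_0(1+\th))^+A_{k_u}$ and $\th<x_0$; this is repairable but indicative of the fragility.)

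The paper's proof sidesteps both difficulties by a different decomposition. Instead of factoring $Y_j=\Pi_{j-1}W_j$ and trying to bound a running maximum of the ratio, it controls the monotone perpetuity $\ov Y_n = \sum_{j\le n}\Pi_{j-1}|B_j|$, which dominates $M_n$ and \emph{does} converge a.s.\ under the original measure. Lemma~\ref{lem: 5.9} establishes, with no change of measure at the top level (Petrov's theorem is used only internally on the random walk $\log\Pi_n$), that one can condition $\Pi_{k_u-j}$ into a window of size $O(u)$ \emph{and} keep $\ov Y_{k_u-j}\le \g s u$ small, at a cost of only a constant factor; critically the constant $s$ can be chosen independently of the window parameters $\g, r$. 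Then the terminal block is built not from a soft positivity claim but from a concrete support point $(C_A,C_B)$ with $C_A>1$, $C_B>0$ (or, when $\P[A>1,B>0]=0$, from Lemma~\ref{lem: mult}, which manufactures such a block inside $\supp\mu^{*n}$ using exactly the fixed-point inequality in \eqref{H1.3}). This removes both the $\sup_jW_j^+$ obstacle and the uniformity issue in your piece $(iii)$. If you want to salvage your approach, replacing the $W_j$-maximum by a bound on $\ov Y_{k_u-R-1}$, and replacing the ``recast \cite{BDM}'' step by the explicit support-point construction of Lemma~\ref{lem: mult}, is essentially what is needed — at which point you would have rederived the paper's argument.
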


The proof of this Proposition is based on two lemmas.
First we consider  the following perpetuity
$$
\ov Y_n = \sum_{j=1}^n \Pi_{j-1}|B_j|
$$ and we study the joint distribution of $\Pi_n$ and $\ov Y_n$ as $n \to \infty.$
 As we will see below we are able to control probability of the event
$$\big\{ c_1 u < \Pi_{k_u-j} < c_2 u,\ \ov Y_{k_u-j} < \gamma u  \big\},$$
for some constants $\gamma <1, c_1, c_2, j$. Next we can choose $((A_{k_u-j+1}, B_{k_u-j+1}), \ldots, (A_{k_u+1}, B_{k_u+1}))$ with positive probability (depending on $c_1,c_2,\gamma,j$, but not on $u$)
such that the corresponding perpetuity exceeds the level $u$ exactly at time $k_u+1$. Observe that this part of the proof does not require our continuity hypotheses \eqref{H1.6} and \eqref{H1.2}. 
\begin{lem}
\label{lem: 5.9}
Assume \eqref{H1} and \eqref{H2}. For a
fixed $r_0>1$, there is $s \geq 1$, such that for every $\gamma >0$, $1< r \leq r_0$ and $u\geq u(\gamma, r )$
\begin{equation} \label{aaa1}
\P\big\{ \g u< \Pi_{k_u} \le \g r u, \quad \! \ov Y_{k_u} \le \g s u \big\} \ge
\frac{D(r)\gamma^{-\a} }{\sqrt{\log u}} u^{-\ov \a},
\end{equation}
where $k_u = \frac{\log u}{\Lambda'(\a)}$ and the function $D(r)>0$ for $1<r\leq r_0$ is increasing.
\end{lem}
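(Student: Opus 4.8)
The plan is to estimate $\P\{\g u < \Pi_{k_u} \le \g r u,\ \ov Y_{k_u} \le \g s u\}$ by first dropping the constraint on $\ov Y_{k_u}$, applying the uniform large deviation asymptotics to the main term $\P\{\g u < \Pi_{k_u} \le \g r u\}$, and then showing that the tail event $\{\ov Y_{k_u} > \g s u\}$ intersected with $\{\Pi_{k_u} > \g u\}$ has probability that is, for $s$ large, a small fraction of the main term. Concretely, I would write
$$
\P\{\g u < \Pi_{k_u} \le \g r u,\ \ov Y_{k_u} \le \g s u\}
\ge \P\{\g u < \Pi_{k_u} \le \g r u\} - \P\{\g u < \Pi_{k_u} \le \g r u,\ \ov Y_{k_u} > \g s u\},
$$
so the problem splits into a lower bound for the first probability and an upper bound for the second.

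For the first (main) term, I would apply Lemma \ref{lem: petrov2.0} (or directly Petrov's Lemma \ref{thm:3.1Petrov}), with $j_n = 0$ and a suitable $\d_n$. Writing $\g u = e^{k_u \Lambda'(\a) + \log\g}$ and $k_u = \log u/\Lambda'(\a)$, the shift $\log\g$ corresponds to $\d_n = \log\g / k_u$, which satisfies $\sqrt{k_u}|\d_n| \to 0$; hence $\P\{\Pi_{k_u} > \g u\} \sim \frac{1}{\a\s(\a)\sqrt{2\pi k_u}}(\g u)^{-\ov\a} e^{-\a k_u \d_n}(1+o(1))$, and after collecting the $u$-dependence this becomes a constant multiple of $\g^{-\a}(\log u)^{-1/2} u^{-\ov\a}$ (the exponent $\ov\a$ on $u$ comes from $\rho\ov\a = \Lambda^*(\rho)/\Lambda'(\a)\cdot$ — precisely the identity \eqref{eq:onestar} relating $k_u$, $\ov\a$ and the rate). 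Subtracting the same asymptotics at level $\g r u$ gives a main term of the form $\frac{c(\a)}{\sqrt{\log u}}\g^{-\a}u^{-\ov\a}(1 - r^{-\a} + o(1)) \ge \frac{c(\a)}{2\sqrt{\log u}}\g^{-\a}u^{-\ov\a}(1-r^{-\a})$ for $u$ large, which is positive for $r>1$ and increasing in $r$, so it can serve as $D(r)\g^{-\a}(\log u)^{-1/2}u^{-\ov\a}$ — provided the correction below does not eat more than half of it.

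For the correction term I would condition on the first exceedance time pattern and use hypothesis \eqref{H2}: since $\E A^{\a+\eps} < \8$ and $\E|B|^{\a+\eps}<\8$, the random variable $\ov Y = \sum_{j\ge1}\Pi_{j-1}|B_j|$ has a finite $(\a+\eps')$-moment for some $\eps'>0$ (this is the standard Goldie/Kesten moment bound under the Cram\'er setup, valid here because $\la(\a+\eps)<\8$ and $\a+\eps$ can be taken with $\Lambda(\a+\eps)$ still finite). Then on the event $\{\Pi_{k_u}>\g u\}$ one splits $\ov Y_{k_u} = \ov Y_{k_u}' + \Pi_{m}\,\wt{\ov Y}$ where $m$ is chosen so that $\Pi$ has "just crossed" the relevant level; more efficiently, I would use a union bound: $\P\{\Pi_{k_u}>\g u,\ \ov Y_{k_u} > \g s u\} \le \P\{\ov Y_{k_u} > \g s u\}\cdot(\text{something})$ is too lossy, so instead one writes $\ov Y_{k_u} \le (\max_{j\le k_u}\Pi_{j})\cdot \ov Y_\8^{(shift)}$-type bounds and uses that, conditionally on $\Pi_{k_u}\in(\g u,\g r u]$, the "overshoot" factor contributing to $\ov Y_{k_u}$ has a tail $\P(\cdot > s) \le C s^{-(\a+\eps')}$ uniformly in $u$. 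This yields $\P\{\g u<\Pi_{k_u}\le\g r u,\ \ov Y_{k_u}>\g s u\} \le C s^{-\eps'}\cdot \P\{\g u<\Pi_{k_u}\le \g r u\}$, and choosing $s = s(r_0)$ large enough that $C s^{-\eps'} < \tfrac14(1-r_0^{-\a})$ makes the correction at most half the main term, uniformly for $1<r\le r_0$.

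The main obstacle is the correction estimate: making rigorous the claim that, conditionally on $\Pi_{k_u}$ lying in a fixed multiplicative window around $u$, the quantity $\ov Y_{k_u}/\Pi_{k_u}$ (or the appropriate reversed-time perpetuity) has a $u$-uniform polynomial tail of order $\a+\eps'$. The clean way is a time-reversal: $\ov Y_{k_u}$ has the same law as $\sum_{j=1}^{k_u}\Pi_{k_u}/\Pi_{j}\cdot|B_j|$ read backwards, i.e. $\Pi_{k_u}\sum_{j=0}^{k_u-1} \wt\Pi_j |\wt B_{j+1}|$ with $(\wt A,\wt B)$ independent i.i.d. copies, so $\ov Y_{k_u} \stackrel{d}{=} \Pi_{k_u}\cdot \ov Y_{k_u}^{rev}$ where $\ov Y_{k_u}^{rev}\le \ov Y_\8$ is independent of $\Pi_{k_u}$ (for the $\le$ one needs $\wt A$'s; this is essentially the standard duality used in \cite{Go}). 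Then the correction event is $\{\g u<\Pi_{k_u}\le\g r u\}\cap\{\ov Y^{rev}_{k_u}>s\}$, the two events are independent, and Markov with the finite $(\a+\eps')$-moment of $\ov Y_\8$ finishes it. I would present this time-reversal identity carefully, since it is the one place where the argument is not purely a matter of quoting Petrov plus moment bounds.
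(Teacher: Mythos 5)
Your outer structure matches the paper exactly: you split
\[
\P\{\gamma u<\Pi_{k_u}\le\gamma r u,\ \ov Y_{k_u}\le\gamma s u\}
\ge \P\{\gamma u<\Pi_{k_u}\le\gamma r u\}
-\P\{\gamma u<\Pi_{k_u}\le\gamma r u,\ \ov Y_{k_u}>\gamma s u\},
\]
handle the main term with Petrov's theorem (correctly), and want to show the correction is a small, $s$--tunable fraction of the main term. Up to that point the proposal is fine and is the same strategy the paper uses.

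The gap is in the correction estimate, precisely where you flagged it. The time--reversal identity you invoke does not give independence. Writing $\ov Y_{k_u}=\Pi_{k_u}\sum_{j=1}^{k_u}|B_j|/(A_j\cdots A_{k_u})$ and relabeling indices in reverse gives
\[
\ov Y_{k_u}\stackrel{d}{=}\Pi_{k_u}\cdot\sum_{i=1}^{k_u}\frac{|\wt B_i|}{\wt A_1\cdots\wt A_i}, \qquad (\wt A_i,\wt B_i)=(A_{k_u+1-i},B_{k_u+1-i}),
\]
but the sum on the right is built from exactly the same $A$'s that form $\Pi_{k_u}$: there is no factorization of the joint law $(\Pi_{k_u},\ov Y_{k_u})$ as $(\Pi_{k_u},\Pi_{k_u}\cdot Z)$ with $Z$ independent of $\Pi_{k_u}$. (Already at $k_u=2$, $\ov Y_2/\Pi_2=|B_1|/(A_1A_2)+|B_2|/A_2$ is manifestly not independent of $A_1A_2$.) If instead you really take the $(\wt A,\wt B)$ to be fresh independent copies, then the distributional identity $\ov Y_{k_u}\stackrel{d}{=}\Pi_{k_u}\ov Y^{rev}_{k_u}$ fails. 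So the two events in the correction do not decouple and Markov cannot be applied directly.

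There are two further problems even if one could somehow decouple. First, your reversed "perpetuity" has weights $1/A$ with $\E\log(1/A)>0$, so it is expansive; the partial sums do not converge and there is no finite $\ov Y_\infty^{rev}$ to dominate by. Second, the moment claim "$\ov Y$ has finite $(\a+\eps')$-moment" is false in general in this theorem's regime: the lemma must hold for arbitrary $\rho=\Lambda'(\a)>0$, so $\a$ can exceed the Cram\'er index $\a_0$ with $\lambda(\a_0)=1$, and then $\ov Y$ has a power tail of index $\a_0<\a$ and no finite $\a$-moment. The standard duality in Goldie is used together with an exponential change of measure (under which the reversed chain becomes contractive); without that tilt the argument does not close. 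The paper instead avoids all of this by a direct union bound over the summands $\Pi_{k_u-i-1}|B_{k_u-i}|$, splitting into ranges of $i$ and applying Chebyshev with a second exponent $\b<\a$ (so $\lambda(\b)/\lambda(\a)<1$ gives geometric decay in $i$), combined with Petrov on a good event for the bulk regime; this produces the factor $s^{-\eps}$ with $\eps=\a-\b$ needed to make the correction uniformly small in $s$.
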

\begin{rem}
For given $r_0$ and $s$ the value of  $u(\gamma, r )$  is not uniformly bounded in $\gamma, r \to 0$.
The property that $s$ can be chosen independently of $\gamma>0$ and $1<r\leq r_0$ is crucial for our proof of Proposition \ref{lem: lower estimates large n}.

\end{rem}
\begin{proof}
By Lemma \ref{thm:3.1Petrov}, there exists a constant $D(r)$ such that
\begin{equation}
\label{eq: wt1}
\P\big\{\gamma u < \Pi_{k_u} \le \g r u  \big\} =\frac{D(r)}{\sqrt{\log u}} \gamma^{-\a} u^{-{\ov \a}} (1+o(1)) \quad \mbox{as} \quad u \to \infty.
\end{equation} Recall $u^{-\ov \a} = u^{-\a} (\E A^\a)^{k_u}$.
Notice we have
\begin{multline*}
\P\big\{ \g u\le \Pi_{k_u} \le \g r u, \quad \! \ov Y_{k_u} \le \g s u \big\} \\
=
\P\big\{ \g u\le \Pi_{k_u} \le \g r u\big\}
- \P\big\{ \g u\le \Pi_{k_u} \le \g r u, \quad \! \overline Y_{k_u} > \g s u \big\}.\\
\end{multline*}
We are going to estimate the second summand in the
last expression and prove
\begin{equation}
\label{eq: d1}
\P\big\{ \g u\le \Pi_{k_u} \le \g r u, \quad \! \overline Y_{k_u} > \g s u \big\} \le
\frac{C  \g^{-\a}}{s^{\eps} }u^{-\ov \a}
\cdot \bigg(
\frac{ D(r) }{\sqrt{\log u} } +\frac 1{{ \log u} }
\bigg)
\end{equation}
for $s\geq 1$ and $u\geq u(\g, r, s)$.
Then we take $s=1+(2C)^\frac{1}{\eps}$,  increase $u(\g, r, s)$ if necessarily and conclude the Lemma.

\medskip

Notice that to prove \eqref{eq: d1}, without any loss of generality, we may assume that $|B_k|>1$ a.s.
We write
$$ \P\big\{ \g u\le \Pi_{k_u} \le \g r u,  \overline Y_{k_u} > \g s u \big\}
 \le \sum_{i\ge 0} \P\bigg\{ \g u\le \Pi_{k_u} \le \g r u,  \Pi_{k_u - i - 1}|B_{k_u -i }| > \frac{\g s u}{2 (i+1)^2} \bigg\}
 $$
We take large $K$ and we divide the sum into two parts depending whether $i>K\log k_u$ or $i\leq K\log k_u$.

\medskip
{\bf Case 1.} Suppose that $i>K\log k_u$. We take $\b<\a$ and define $\eps = \a - \b$, $\d = \lambda(\b)/\lambda(\a)<1$. Moreover, let $\Pi '_i=A_{k_u-i+1}\cdots A_{k_u}$. We write
\begin{align*}
  \P\bigg\{ \g u \le &\  \Pi_{k_u} \le \g r u,  \quad \! \Pi_{k_u - i - 1}|B_{k_u -i }| > \frac{\g s u}{2 i^2} \bigg\}\\
 &\le  \sum_{m\ge 0}
 \P\bigg\{  \Pi_{k_u} \ge \g  u, \quad \!  \frac{\g s u e^{m}}{2 i^2} \le \Pi_{k_u - i - 1}|B_{k_u -i }| < \frac{\g s u e^{m+1}}{2 i^2} \bigg\}\\
 &=\sum_{m\ge 0}
 \P\bigg\{  \Pi_{k_u-i-1}A_{k_u-i}\Pi '_i \ge \g  u, \quad \!  \frac{\g s u e^{m}}{2 i^2} \le \Pi_{k_u - i - 1}|B_{k_u -i }| < \frac{\g s u e^{m+1}}{2 i^2} \bigg\}\\
&\le  \sum_{m\ge 0}
\int \P\bigg\{  \Pi_{k_u-i-1} a \Pi '_i \ge \g  u, \quad \!  \frac{\g s u e^{m}}{2 i^2} \le \Pi_{k_u - i - 1}b < \frac{\g s u e^{m+1}}{2 i^2} \bigg\}\mu(da\, d|b|)\\
 &\le  \sum_{m\ge 0}
\int \P\bigg\{  \Pi _{k_u-i-1} \ge  \frac{\g s u e^m}{2i^2 b}\bigg\}
\cdot
\P\bigg\{  \Pi '_{i} \ge \frac{2i^2 b}{as e^{m+1}} \bigg\}
\mu(da\, d|b|)\\
 \end{align*}

 Now we apply twice the Chebyshev inequality and estimate the last expression by
\begin{align*}
  \sum_{m\ge 0}
\int\P\bigg\{  \Pi_{k_u-i-1} \ge & \ \frac{\g s u e^m}{2i^2 b}\bigg\} \cdot \P\bigg\{  \Pi_{i} \ge \frac{2i^2 b}{as e^{m+1}} \bigg\}\mu(da\, d|b|)\\
& \le  \sum_{m\ge 0}
\int  \frac{(2i^2 b)^\a}{(\g s u e^m)^\a}\, \l(\a)^{k_u-i}
\cdot \frac{(as e^m)^\b}{(2i^2 b)^\b} \l(\b)^i \mu(da\, d|b|)\\
&\le \frac {2^{\a -\b}}{\g^\a s^{\eps}} \frac 1{u^{\ov \a}}  i^{2\eps}\d^i\sum_{m\ge 0} e^{-m \eps}\int  b^{\eps} a^{\b}\mu(da\, d|\b|)\\
&= \frac{C  i^{2\eps}\d^i }{\g ^{\a}s^{\eps}} u^{-\ov \a}
\end{align*}
with the constant $C$ depending only on $\a , \b $ and $\mu $.
Summing over $i$ we obtain
\begin{align*}
\sum_{i>K \log k_u}   \P\bigg\{ \g u\le \Pi_{k_u} \le \g r u,& \quad \! \Pi_{k_u - i - 1}|B_{k_u -i }| > \frac{\g s u}{2 i^2} \bigg\}\\
&\le \sum_{i>K \log k_u} \frac{C  i^{2\eps}\d^i }{\g ^{\a}s^{\eps}} u^{-\ov \a}
\leq \frac{C}{s^{\eps}\g ^{\a}{\log u}} u^{-\ov \a}
\end{align*}
provided $K$ is sufficiently large. Note that we can choose $K$ depending only on $\mu, \alpha $ and $\beta$.

{\bf Case 2.} Now we assume that $i\le K\log k_u$ and that $L$ is large enough and satisfy
\begin{equation}
\label{eq: L ineq}
\begin{split}
-\a L +1 < 0&\quad \mbox{if }\ \Lambda(\a)\ge 0,\\
-\a L +1 -\Lambda(\a) K < 0&\quad \mbox{if }\ \Lambda(\a)<  0.\\
\end{split}
\end{equation}
Then
\begin{equation}
\label{eq: 34}
\begin{split}
\P\bigg\{\g u \leq \Pi_{k_u} \leq &\  \g  ru, \quad \! \Pi_{k_u - i - 1}|B_{k_u -i }| > \frac{\g s u}{2 (i+1)^2} \bigg\}\\
& \le
\P\bigg\{  \Pi_{k_u - i - 1}|B_{k_u -i }| \ge  \g s u\cdot k_u^L \bigg\}\\
& \quad + \P\bigg\{   \g u \leq \Pi_{k_u} \leq \g  ru    ,  \,     \frac{\g s u}{2 (i+1)^2} \le  \Pi_{k_u - i - 1}|B_{k_u -i }| \le \g s u \cdot k_u^L \bigg\}\\
\end{split}
\end{equation}
The first term is asymptotically negligible, since by the Chebyshev inequality and \eqref{eq: L ineq} (then the term in the brackets below is uniformly bounded for $i\le K \log k_u$)
\begin{equation}
\label{eq: resasym}
\begin{split}
\P\big\{  \Pi_{k_u - i - 1}|B_{k_u -i }| \ge  \g s u\cdot k_u^L \big\}
&\le \P\big\{  \Pi_{k_u - i - 1} \ge  \g s u\cdot k_u^L \big\}\\
&\le \frac 1{\gamma^\a s^\a u^\a k_u^{\a L}} \lambda(\alpha)^{k_u-i-1}\E\big[|B|^{\a}\big] \\
&\le \frac { C\gamma^{-\a} s^{-\a}}{{\log u}} u^{-\ov \a}\Big( k_u^{- \a L + 1} e^{-(i+1) \Lambda(\a)} \Big)\\
&\le \frac { C}{\gamma^{\a} s^{\a}{\log u}} u^{-\ov \a}
\end{split}
\end{equation}
We will use this estimate later on.

Next to estimate the second term in \eqref{eq: 34} we write
\begin{align*}
 \P\bigg\{    \g u \leq \Pi_{k_u} \leq &\ \g  ru,  \quad \!   \frac{\g s u}{2 i^2} \le  \Pi_{k_u - i - 1}|B_{k_u -i }| \le \g s u \cdot k_u^L \bigg\}\\
&=\P\bigg\{\g u \leq \Pi_{k_u-i-1}A_{k_u-i}\Pi '_i \leq \g  ru, \quad \!   \frac{\g s u}{2 i^2} \le  \Pi_{k_u - i - 1}|B_{k_u -i }| \le \g s u \cdot k_u^L \bigg\}\\
&\le \sum_{0\le m\le \log (2i^2 k_u^L)} \int \P \big( U(a,b,m) \big)\mu(da, d|b|)
\end{align*}
for
$$
U(a,b,m) = \bigg\{\g u \leq \Pi_{k_u-i-1}a\Pi '_i \leq \g  ru, \quad \!   \frac{\g s u e^m}{2 i^2} \le  \Pi_{k_u - i - 1}b \le \frac{ \g s u \cdot e^{m+1}}{2i^2} \bigg\}.
$$ Now we dominate the sets $U(a,b,m)$ as  follows
\begin{align*}
\P\big(U(a,b,m)\big) &\le
  \P\bigg\{\g u \leq \Pi_{k_u-i-1}a\Pi '_i \leq \g  ru, \quad \!   \frac{2bi^2}{s a e^{m+1}} \le  \Pi_{i}' \le    \frac{2r b i^2}{s a e^{m}}         \bigg\}\\
&\le  \int \P\bigg\{\frac{\g u}{a w} \leq \Pi_{k_u-i-1} \leq  \frac{\g r u }{a w}  \bigg\} \P\bigg\{ \frac{2bi^2}{s a e^{m+1}} \le  \Pi_{i}' \le    \frac{2r b i^2}{s a e^{m}} ,   \Pi_{i}' \in (w, w+dw)  \bigg\}\\
&\le  \sup_{\frac{2bi^2}{s a e^{m+1}} \le  w \le    \frac{2r b i^2}{s a e^{m}}}\P\bigg\{\frac{\g u}{a w} \leq \Pi_{k_u-i-1} \leq  \frac{\g r u }{a w}  \bigg\} \P\bigg\{ \frac{2bi^2}{s a e^{m+1}} \le  \Pi_{i}' \le    \frac{2r b i^2}{s a e^{m}} \bigg\}
\end{align*}
 Since both $m$ and $i$ are bounded by a constant times $\log k_u$, we can apply Petrov's result  \ref{thm:3.1Petrov} on the set
$$
\Theta = \big\{ b\le e^{\sqrt{k_u}}  \big\}
$$
{\bf Case 2a.} 
Applying Lemma  \ref{thm:3.1Petrov} and Chebyshev inequality we have
\begin{align*}
\sum_{0\le m\le \log (2i^2 k_u^L)}& \int  {\bf 1}_{\Theta}(b) \P \big( U(a,b,m) \big)\mu(da, d|b|)\\
&\le \!\!\!\sum_{0\le m\le \log (2i^2 k_u^L)}\!\! \int \!\! {\bf 1}_{\Theta}(b)
\frac {C D(r)}{\sqrt{k_u-i-1}} \frac{(i^2 b)^\a}{(\g s e^m)^\a} \frac{\lambda(\a)^{k_u-i}}{u^\a}  \frac{(as e^m)^\b}{(b i^2)^\b}
\lambda(\b)^i
 \mu(da,\,d|b|)\\
&\le \frac{C D(r)}{\g^\a s^{\eps}} \frac 1{\sqrt {k_u}} u^{-\ov \a} i^{2\eps}\d^i \sum_{m\geq 0} e^{-m\eps} \cdot \int b^{\eps} a^\b \mu(da,\,d|b|)\\
&\le \frac{C D(r)}{\g^\a s^{\eps}\sqrt {k_u}} u^{-\ov \a}
\end{align*}
with the constant $C$ depending only on $\a , \b$ and $\mu $.

\noindent
{\bf Case 2b.} Applying twice the Chebyshev inequality we obtain
\begin{align*}
\sum_{0\le m\le \log (2i^2 k_u^L)}& \int  {\bf 1}_{\Theta^c}(b) \P \big( U(a,b,m) \big)\mu(da, d|b|)\\
&\le \sum_{0\le m\le \log (2i^2 k_u^L)} \int {\bf 1}_{\Theta^c}(b)
 \frac{(2i^2 b)^\a}{(\g s e^m)^\a}\cdot \frac{\lambda(\a)^{k_u-i}}{u^\a} \cdot \frac{(as e^m)^\b}{(b i^2)^\b}\cdot
\lambda(\b)^i
 \mu(da,\,d|b|)\\
&\le \frac{2^{\a}}{\g^\a s^{\eps}}  u^{-\ov \a} i^{2\eps}\d^i \sum_{m\geq 0} e^{-m\eps} \cdot \int {\bf 1}_{\Theta^c}(b) b^{\eps} a^\b \mu(da,\,d|b|)\\
&\le \frac{C}{\g^\a s^{\eps}}  u^{-\ov \a}\E |B|^{\eps }A^{\beta }{\bf 1}_{\Theta^c}
\end{align*}
We estimate the integral by the H\"older inequality with
$p_1=\frac{\alpha}{\b}, p_2=\frac{\alpha}{\epsilon}$, $\frac 1{p_1}+ \frac 1{p_2}=1$, applied to variables $A^\b$, $B^\epsilon {\bf 1}_{\Theta^c}$.
We get
$$
\int {\bf 1}_{\Theta^c}(b) b^{\eps} a^\b \mu(da,\,d|b|) \le
\E \big[ A^{\alpha}\big]^{\frac{\b}{\alpha}}   \E\big|B|^{\alpha}{\bf 1}_{\Theta^c} \big]^{\frac {\epsilon}{\alpha}}\leq
e^{-\frac{\d\epsilon}{\alpha}\sqrt{k_u}}\E \big[ A^{\alpha}\big]^{\frac{\b}{\alpha}}   \E\big|B|^{\alpha +\delta} \big]^{\frac {\epsilon}{\alpha}} \le \frac {C} {{k_u}}$$
The second inequality follows by Chebyshev inequality.

Finally, by \eqref{eq: resasym} and above estimates we obtain  the estimate in
case 2
\begin{align*}
\sum_{i \le K \log k_u} \P\bigg\{ \g u\le &\ \Pi_{k_u} \le \g r u,  \Pi_{k_u - i - 1}|B_{k_u -i }| > \frac{\g s u}{2 (i+1)^2} \bigg\}\\
&\le \frac{C}{\g^\a \sqrt{\log u}} \, u^{-\ov \a}
\sum_{i\le K \log k_u} \Big(\frac{ s^{-\a} k_u^{-\a L +1} e^{-i \Lambda(\a)}}{\sqrt{log u}} + 2D(r)s^{-\eps} i^{2\eps} \d^i )
\Big)\\
&\le \frac{C}{\g^\a {\log u}} \, u^{-\ov \a}s^{-\a } + \frac{CD(r)}{\g^\a \sqrt{\log u}} \, u^{-\ov \a}s^{-\epsilon}
\end{align*}
for $s\geq 1$. Combining both cases we obtain \eqref{eq: d1} and the lemma follows.

\end{proof}

\begin{lem}
\label{lem: mult}
Suppose that \eqref{H1.3} is satisfied. 
Then there is $n$ such that
\begin{equation}
\label{eq: ws}
\P\big[ \Pi_n>1\ \mbox{ and }\ Y_n>0 \ \mbox{ and }\ Y_k < Y_n \ \mbox{ for } k=1,\ldots,n-1
\big] >0.
\end{equation}
In particular
$$
\P\big[ \Pi_n>1\ \mbox{ and }\ Y_n>0
\big] >0
 $$ for some $n$.
\end{lem}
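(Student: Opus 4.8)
I would use the standard ``steering'' argument: it suffices to exhibit one deterministic finite sequence $\big((\bar a_1,\bar b_1),\dots,(\bar a_n,\bar b_n)\big)$, with every $(\bar a_k,\bar b_k)\in\supp\mu$, along which all three requirements in \eqref{eq: ws} hold with \emph{strict} inequalities. Indeed, the event in \eqref{eq: ws} depends only on $(A_1,B_1,\dots,A_n,B_n)$ and equals $\{(A_1,B_1,\dots,A_n,B_n)\in G\}$ for an \emph{open} set $G\subseteq(\R^+\times\R)^n$, since $\Pi_n$ and the $Y_k$ are polynomial --- hence continuous --- in these variables and the inequalities are strict. Then $G$ contains a product neighbourhood $U_1\times\cdots\times U_n$ of the chosen point, each $U_k$ is a neighbourhood of a point of $\supp\mu$ so $\mu(U_k)>0$, and by independence the probability in \eqref{eq: ws} is at least $\prod_{k=1}^n\mu(U_k)>0$. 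The last assertion of the lemma is then immediate, since that event is contained in $\{\Pi_n>1,\ Y_n>0\}$.

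Everything thus reduces to constructing the trajectory, which is where \eqref{H1.3} is used. Write $\ell_i=b_i/(1-a_i)$, so that $a_1<1<a_2$ and $\ell_1>\ell_2$. I would first record the elementary fact that at least one of $b_1,b_2$ is positive: since $1-a_1>0$ and $1-a_2<0$ we have $\operatorname{sign}\ell_1=\operatorname{sign}b_1$ and $\operatorname{sign}\ell_2=-\operatorname{sign}b_2$, so $b_1\le 0$ would give $\ell_1\le 0$, hence $\ell_2<0$, hence $b_2>0$. If $b_2>0$ the construction is trivial: take $n=1$ and $(\bar a_1,\bar b_1)=(a_2,b_2)$, for then the running-maximum condition is vacuous and $G=\{A_1>1,\ B_1>0\}$ is a nonempty open set meeting $\supp\mu$.

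The substantial case is $b_2\le 0$, so that $b_1>0$, $\ell_1>0$ and $\ell_2\ge 0$. Here I would take the two-block trajectory equal to $(a_2,b_2)$ at steps $1,\dots,N_2$ and to $(a_1,b_1)$ at steps $N_2+1,\dots,N_2+N_1=:n$. For it, $\Pi_n=a_2^{N_2}a_1^{N_1}$; on the first block $Y_k=b_2\frac{a_2^k-1}{a_2-1}\le 0$; on the second block the increments $Y_{N_2+m}-Y_{N_2+m-1}=\Pi_{N_2+m-1}b_1=a_2^{N_2}a_1^{m-1}b_1$ are strictly positive, so $Y$ increases strictly there, with terminal value
\[
Y_n=\ell_2+a_2^{N_2}\big(\ell_1(1-a_1^{N_1})-\ell_2\big).
\]
I would first pick $N_1$ so large that $\ell_1(1-a_1^{N_1})>\ell_2$ --- possible exactly because $\ell_1(1-a_1^{N_1})\uparrow\ell_1>\ell_2$, which is \eqref{H1.3} --- so the bracket is positive; then, with $N_1$ fixed, I pick $N_2$ large enough that $\Pi_n>1$ (i.e.\ $N_2\log a_2>N_1\log(1/a_1)$) and $Y_n>0$; both hold for all large $N_2$, as $\Pi_n\to\infty$ and $Y_n\to\infty$. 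Then $Y_k\le 0<Y_n$ for $k\le N_2$ and $Y_k<Y_n$ for $N_2<k<n$ by the monotonicity just noted, so \eqref{eq: ws} holds along this trajectory.

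The only genuine obstacle is this case split and the coordination of the two block lengths: iterating a single support point does not suffice when neither distinguished point has both $a>1$ and $b>0$, and the inequality $\ell_1>\ell_2$ in \eqref{H1.3} is precisely what forces the $(a_1,b_1)$-block (with $b_1>0$) to push $Y$ back up to a strictly positive value, indeed to $+\infty$, after the $(a_2,b_2)$-block has inflated $\Pi$ past $1$ at the cost of driving $Y$ non-positive. Once $N_1$ and $N_2$ are fixed, the passage from the deterministic trajectory to a positive-probability statement is routine.
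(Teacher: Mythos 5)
Your proof is correct and takes essentially the same route as the paper's: reduce to exhibiting a deterministic trajectory consisting of a block of $(a_2,b_2)$-steps followed by a block of $(a_1,b_1)$-steps, check the strict inequalities at the exact support point (where the key computation is the same formula for $Y_n$ in terms of $\ell_i=b_i/(1-a_i)$, choosing the $(a_1,b_1)$-block length first and then the $(a_2,b_2)$-block length), and then pass to positive probability via openness of the constraint set and the support-point property. Your presentation is marginally cleaner on two small points — you handle $b_2=0$ uniformly with $b_2<0$, and you state the openness argument explicitly rather than tracking a $\delta$-parametrized lower bound $f(\delta)$ — but the content is the same.
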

\begin{proof} Here we use assumption \eqref{H1.3}.
 Of course the lemma holds for  $n=1$  when $b_2>0$, hence we assume in the proof  $b_2<0$. Then $b_1>0$.
We fix parameters $\d,N,M$ (their values will be specified below). Define
 $$
 U_\d(a,b) = \big\{ (a',b'):\; |a'-a| \le \d a  \ \mbox{ and }\ |b'-b|\le \d |b| \big\}
 $$
 and let
 \begin{align*}
 U = \Big\{ \big\{(A_k,B_k)\big\}_{k=1}^{N+M}:\
&(A_k, B_k) \in  U_\d (a_2,b_2)\ \mbox{ for }\ k=1,\ldots , N;\\
&(A_k, B_k) \in  U_\d (a_1,b_1)\ \mbox{ for }\ k=N+1,\ldots , N+M
 \Big\}
 \end{align*}
 for $\d< \min\{|b_2|, b_1\}$.
By assumption \eqref{H1.3} the  probability of $U$.  From now we  consider  the perpetuity
$$ Y_j = \sum_{i=1}^j A_1\cdots A_{i-1}B_i
= \sum_{i=1}^j \Pi_{i-1}B_i. $$ on the set $U$.
 We have
\begin{eqnarray*}
Y_{N+M} &=& Y_N + \Pi_N \sum_{j=1}^M a_1^{j-1} b_1\\
&\ge& \sum_{j=1}^N \big((1+\d)a_2\big)^{j-1} b_2 (1+\d) + \big( (1-\d) a_2 \big)^N \sum_{j=1}^M\big((1-\d)a_1\big)^{j-1} b_1 (1-\d) \\
&=& b_2(1+\d) \frac{(1+\d)^N a_2^N-1}{(1+\d)a_2 - 1} + (1-\d)^N a_2^N \frac{(1-\d)^M a_1^M - 1}{(1-\d)a_1 - 1} b_1 (1-\d).
\end{eqnarray*}
Denote the last expression by $f(\d)$.
We will find integers $N$ and $M$ such that $a_2^N a_1^M>1$ and $f(0)>0$. Then by continuity,
there exists $\d>0$ such that $f(\d)>0$ and simultaneously $\Pi_{N+M}=A_1...A_{M+N}>1$ for any $A_1,..., A_{M+N}$ in $U$.
We have
$$
f(0) = \frac{b_1}{1-a_1} (1-a_1^M) a_2^N + \frac {b_2}{1-a_2} (1-a_2^N).
$$
To prove that the last expression is strictly positive recall
$$
\frac{b_1}{ 1-a_1} > \frac{b_2}{1-a_2}>0.
$$ Since this is strict inequality and $a_1<1$ we can take large $M$ such that
$$
\frac{b_1}{ 1-a_1}(1-a_1^M) > \frac{b_2}{1-a_2}.
$$ Now, for any $N\geq 1$ we have
$$
\frac{b_1}{ 1-a_1}(1-a_1^M)a_2^N > \frac{b_2}{1-a_2}a_2^N > \frac{b_2}{1-a_2}(a_2^N-1)
$$ and this imply $f(0)>0$. We take $N$ large enough to satisfy  $a_1^M a_2^N > 1$.

\medskip

Notice that $Y_j<0$ for $j\le N$, hence $$Y_j<Y_{N+M}  \mbox { for } j=1,\ldots,N.$$
Moreover since for $j>N$
$$
Y_{j+1} - Y_j = \Pi_j B_{j+1} > 0
$$  the sequence increases for $j>N$ and attains its maximum for $j=N+M$. Therefore also
$$Y_j<Y_{N+M}  \mbox { for } j=N+1,\ldots,N+M-1.$$

Finally since
$$
\P\big[ \Pi_{N+M}>1\ \mbox{ and }\ Y_{N+M}>0  \ \mbox{ and }\ Y_k < Y_{N+M} \mbox{ for } k=1,\ldots,{N+M}-1
\big] \\
 \ge \P(U) > 0
$$
we conclude the lemma for $n=N+M$.
\end{proof}

\begin{proof}[Proof of Proposition \ref{lem: lower estimates large n}]  We will first prove the result under an additional assumption that
$\P[A>1, B>0]>0$.
Fix a point $(C_A,C_B)\in {\rm supp \mu}$ such that $C_A > 1$ and $C_B>0$.
Define
$$
\theta =\Big( 1 - \frac 34\eps_0 \Big)\bigg( \frac {C_B}{C_A^{j}}\ \frac{C_A^j-1}{C_A-1} \bigg)^{-1},
$$
Notice that for all $0<\eps _0\leq 1$ and $j\geq 2$ we have the inequalities   $\theta_1\geq \theta \geq \theta _0>0$, where  constants  $\theta_0,\theta_1$ depends on $C_A, C_B$ only.
Fix any   $\eps_0\le \frac{C_A-1}{4C_A}\le 1$ satisfying
$$
\theta_0 C_B > 2\eps_0.
$$
Let $s_0$ satisfy the conclusion of Lemma \ref{lem: 5.9}
for $r_0=4$.
We put $C_2 = s_0\theta_1$ and fix large enough $j$ such that
$$
\frac{C_2}{C_A^j} < \frac{\eps_0}{2}
$$

For very small $0< \d\leq \frac12$ (that will be defined slightly later) consider the set
\begin{multline*}
\Omega = \bigg\{ \frac{\theta u(1-\d)}{C_A^j} < \Pi_{k_u-j} <\frac{\theta u(1+\d)}{C_A^j},\ \ov Y_{k_u-j}  <\frac{C_2 u}{C_A^j}, \\
(A'_k, B'_k) \in U_\d( C_A , C_B) \ \mbox{ for }\; k=1,\ldots,j+1
\bigg\}
\end{multline*}
Notice that on the set $\Omega$ we have
\begin{eqnarray*}
\Pi_{k_u-j}Y'_{j}&=& \Pi_{k_u-j}\big( B'_1 + A'_1 B'_2 +\cdots + A'_1\ldots A'_{j-1} B'_j \big)\\
&\le& \frac{\theta u (1+\d)}{C_A^j} \cdot (1+\d)C_B\cdot \frac{C_A^j(1+\d)^j-1}{C_A(1+\d)-1}\\
&=& D_1(\d) u\\
\Pi_{k_u-j}Y'_{j}
&\ge& \frac{\theta u (1-\d)}{C_A^j}\cdot (1-\d)C_B \cdot  \frac{C_A^j(1-\d)^j-1}{C_A(1-\d)-1}\\
&=& D_2(\d) u\\
\Pi_{k_u}B_{k_u+1} &\ge&  \frac{\theta u (1-\d)}{C_A^j}\cdot C_A^j(1-\d)^j \cdot C_B(1-\d) = \theta C_B(1-\d)^{j+2}\\
&=& D_3(\d)u.
\end{eqnarray*}
moreover, by direct computation     
\begin{eqnarray*}
D_1(0) &=& D_2(0) =  \frac {\theta C_B}{C_A^{j}}\cdot \frac{C_A^j-1}{C_A-1} = 1-\frac 34 \eps_0,\\
D_3(0) &=& \theta C_B \ge  \theta_0 C_B \ge 2\eps_0.
\end{eqnarray*}
Therefore, by continuity of   $D_i(\d)$     one can choose $\d \leq \frac 12$ such that
$$
D_1(\d) < 1-\frac{\eps_0}2,\qquad D_2(\d)>1-\eps_0, \qquad D_3(\d)\ge \frac 32 \eps_0.
$$
Hence we have
\begin{align*}
\Omega &\subset \Big\{ \Pi_{k_u-j}Y'_{j-1}\in\!\big( (1-\eps_0)u, (1-\eps_0/2)u \big), \ov Y_{k_u-j} <\frac{\eps_0}2u, \Pi_{k_u}B_{k_u+1}\ge  (3/2) \eps_0 u, M_{k_u}<u
\Big\}\\
&\subset \Big\{ Y_{k_u}\in \big( (1-3\eps_0/2)u,u \big),\ \Pi_{k_u}B_{k_u+1}\ge 3\eps_0 u/2,\  M_{k_u}<u \Big\}\\
&\subset \Big\{ M_{k_u}<u \mbox{ and }\  Y_{k_u+1} >u \Big\} ,
\end{align*}
On the other hand by Lemma \ref{lem: 5.9} applied with $$\gamma= \frac{\theta (1-\d)}{C_A^j},\quad r= \frac{1+\d}{1-\d}, \quad r_0=4, \qquad \mbox{and}\quad
s=\frac{C_2}{\theta(1-\d)}\geq s_0 =\frac{C_2}{\theta_1}$$ we obtain
\begin{multline*}
\P(\Omega) = \P\bigg\{ \frac{\theta u(1-\d)}{C_A^j} < \Pi_{k_u-j} <\frac{\theta u(1+\d)}{C_A^j},\ \ov Y_{k_u-j} <\frac{C_2 u}{C_A^j}
 \bigg\} \\
\cdot
\P\big\{ (A'_k, B'_k) \in U_\d( C_A , C_B) \ \mbox{ for }\; k=1,..,j+1
 \big\}
\ge \frac{\eta }{\sqrt{\log u}} \ u^{-\ov \a}
\end{multline*}
for some very small constant $\eta $.

\medskip

If $\P[A>1, B>0]=0$ we apply Lemma \ref{lem: mult} and proceed as above. This time
we fix a point $(C_A, C_B)\in {\rm supp } \mu^{*n}$ such that $C_A>1$ and $C_B>0$, but instead of choosing $(A_k', B_k')$ with the law $\mu $ close to $(C_A,C_B)$ one has to pick up $(\tilde A_k, \tilde B_k)$ with the law $\mu^{*n}$ (i.e. partial products and perpetuities). This means: 
\begin{align*}
\tilde A_1=A' _1\cdots A '_n, &\quad \tilde B_1=\sum _{i=1}^n A' _1\cdots A '_{i-1}B' _i\\
\tilde A_k=A' _{(k-1)n+1}\cdots A '_{kn}, &\quad \tilde B_k=\sum _{i=(k-1)n+1}^{kn} A' _{(k-1)n+1}\cdots A '_{i-1}B' _i
\end{align*}
and $(\tilde A_k, \tilde B_k)$ are chosen accordingly.
Exactly the same calculations as above give the result.
The condition $\{Y_k<Y_n, k<n\}$ in \eqref{eq: ws} is needed, to ensure that the perpetuity will not exceed the level $u$ before time $k_u$.
We omit the details.
\end{proof}

\section{Upper estimates in Theorem \ref{thm: main theorem}}

In this section we prove the following result, which gives upper estimates in Theorem \ref{thm: main theorem}
\begin{prop}
\label{prop: 6.1}
Assume that \eqref{H1},  \eqref{H2}, \eqref{H1.4},   \eqref{H1.6} and \eqref{H1.2} are satisfied.  Then there exists $C$ such that for every $u\geq 2$  we have

$$
\P\big[ M_{k_u}<u,\; Y_{k_u+1}>u \big] \le\P\big[ Y_{k_u}<u,\; Y_{k_u+1}>u \big] \le \frac C{\sqrt{\log u}}\ u^{-\ov \a}
$$
Moreover  there is $\s<1 $ and $C >0$ 
such that  for  every $\eps>0$ and $u\geq 2$ we have
\begin{equation}\label{62}
\P\big[ \tau_u = k_u +1\big]
\le \frac {C(\eps^{1-\s}+u^{-\xi '})}{\sqrt{\log u}\ } \ u^{-\ov \a}.
\end{equation}
\end{prop}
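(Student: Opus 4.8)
The plan is to decompose the event $\{\tau_u = k_u+1\}$ according to where the ``big jump'' that pushes the perpetuity above $u$ occurs, and to bound each contribution using the refined Petrov estimate of Lemma \ref{lem: petrov2.0}. Write $Y_{k_u+1} = \Pi_{k_u} B_{k_u+1} + Y_{k_u}$ and, more generally, for an index $\ell$ with $1 \le \ell \le k_u$, split $Y_{k_u+1}$ into the prefix $Y_{\ell-1}$, the single term $\Pi_{\ell-1} B_\ell$, and the ``tail'' contributed by $(A_k,B_k)_{k>\ell}$. On $\{\tau_u = k_u+1\}$ we have $Y_{k_u}<u$ and $Y_{k_u+1}>u$; since $Y$ is a sum of products and $\Pi_n \to 0$, with overwhelming probability a single product term $\Pi_{\ell-1}|B_\ell|$ must be comparable to $u$ at some scale. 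So I would write
\begin{equation*}
\P[\tau_u = k_u+1] \le \sum_{\ell=1}^{k_u} \sum_{m \ge 0} \P\Big[ Y_{k_u}<u,\ \frac{u e^m}{2\ell^2} \le \Pi_{\ell-1}|B_\ell| < \frac{u e^{m+1}}{2\ell^2} \Big] + (\text{negligible}),
\end{equation*}
exactly as in the proof of Lemma \ref{lem: 5.9}, the negligible part coming from the region where no single term is large (handled by the two-sided Chebyshev/H\"older argument already carried out there, which is where the $u^{-\xi'}$ error and the exponent $\s<1$ will come from).

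The first (and easiest) inequality, $\P[Y_{k_u}<u, Y_{k_u+1}>u] \le \tfrac{C}{\sqrt{\log u}} u^{-\ov\a}$, follows from the crude bound $\P[Y_{k_u+1}>u] \le \P[\Pi_{k_u}|B_{k_u+1}|>u/2] + \P[Y_{k_u}>u/2]$ iterated, or more efficiently by noting $\{Y_{k_u+1}>u\} \subset \bigcup_\ell \{\Pi_{\ell-1}|B_\ell| > u/(2\ell^2)\}$ up to the negligible set, then applying Lemma \ref{lem: petrov2.0} to each $\P[\Pi_{\ell-1} \ge u/(2\ell^2|B_\ell|)]$ after conditioning on $(A_\ell,B_\ell)$ and using $\E|B|^{\a+\eps}<\infty$ to sum the resulting $\ell^{2\a}$ against the subexponential factor $\lambda(\a)^{-(k_u-\ell)}$ — here hypothesis \eqref{H1.4} guarantees that $\ell = k_u$ (no prior big jump) dominates and produces the clean $u^{-\ov\a}/\sqrt{\log u}$ order. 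This is the point where \eqref{H1.4} is genuinely used: if $\Lambda(1) \ge \Lambda(\a)$ and $\a_{\min}>1$, the sum over $\ell$ of $\lambda(\a)^{-(k_u-\ell)} \E|B|^\a$ would instead be dominated by small $\ell$ and blow up, which is precisely the phenomenon Theorem \ref{mthm2} exhibits.

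For the sharper bound \eqref{62} I would further exploit that on $\{\tau_u = k_u+1\}$ the prefix $Y_{k_u}$ is strictly below $u$, so the overshoot $Y_{k_u+1}-u$ is controlled: conditioning on the last jump being at $\ell = k_u$ gives $\Pi_{k_u-1}|B_{k_u}|$ in a window, and the requirement $Y_{k_u}<u$ forces $\Pi_{k_u-1}|B_{k_u}| < u$, i.e. $m$ bounded; the factor $\eps^{1-\s}$ arises from the measure of the thin shell $\{(1-c\eps)u < \Pi_{\ell-1}|B_\ell| < u\}$ once one invokes hypotheses \eqref{H1.6} and \eqref{H1.2} to write $\mu(da\,d|b|) \le f_A(a)\,da\,d\nu(b)$ and integrate the Petrov density in the $a$-variable over an interval of length $O(\eps)$, the density decay $f_A(a) \le C(1+a)^{-D}$ with $D>1+\a$ making that integral finite and of order $\eps^{1-\s}$ for an appropriate $\s<1$ (the loss of a power being the usual cost of trading a density bound for an interval estimate). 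The main obstacle I expect is bookkeeping the interaction of the three scales — the prefix sum $Y_{\ell-1}$, the single term $\Pi_{\ell-1}|B_\ell|$, and the suffix perpetuity built from $(A_k,B_k)_{k>\ell}$ — uniformly in $\ell$ and $m$, and in particular showing that the suffix contribution together with $Y_{\ell-1}$ cannot by itself carry the perpetuity above $u$ except on a set absorbed into the $u^{-\xi'}$ term; this requires the same bivariate Chebyshev estimate (with exponents $\a$ and $\b<\a$, $\eps = \a-\b$) used repeatedly in Lemma \ref{lem: 5.9}, now run ``from both ends'' so that the product of $\lambda(\a)^{\ell}$ and $\lambda(\b)^{k_u-\ell}$ types of factors is summable, which is exactly where \eqref{H1.4} re-enters to keep the geometric ratios below $1$.
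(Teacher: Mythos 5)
Your plan diverges from the paper's proof in a way that leaves a genuine gap, and the key mechanism that produces the $\varepsilon^{1-\sigma}$ factor is not correctly identified.

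\textbf{Logical structure.} The paper does \emph{not} prove the first inequality separately and then sharpen it to \eqref{62}; the logic runs the other way. It proves a single localized estimate
$$
\P\big[Y_{k_u}\in\big((1-\eps)u,(1-\eps/2)u\big),\; Y_{k_u+1}>u\big]\le \eps^{1-\sigma}\,\frac{Cu^{-\ov\a}}{\sqrt{\log u}},
$$
and then the first inequality of the Proposition follows by summing this over dyadic $\eps=2^{-n}$. Your proposed direct argument for the first inequality — the union bound $\{Y_{k_u+1}>u\}\subset\bigcup_\ell\{\Pi_{\ell-1}|B_\ell|>u/(2\ell^2)\}$ followed by Petrov on each term — does not deliver $u^{-\ov\a}/\sqrt{\log u}$. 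Each single-term probability carries a factor $(2\ell^2)^\a$, which near $\ell=k_u$ is of order $(\log u)^{2\a}$, and the geometric damping $\lambda(\a)^{-(k_u-\ell)}$ does not remove it: you end up with $(\log u)^{2\a-1/2}u^{-\ov\a}$, off by a polynomial in $\log u$. Whenever the paper uses a weighted decomposition of this kind (e.g.\ in Lemma \ref{lem: 5.9}) it always does so under a \emph{pair} of simultaneous constraints — one bounded by Chebyshev with exponent $\a$ and the other with $\b<\a$ — so that the $\ell^{2\a}$ and $\ell^{-2\b}$ partially cancel to $\ell^{2(\a-\b)}$, which is summable against the geometric term. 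A one-sided union bound does not give you this cancellation. In Lemma \ref{lem: 333} the paper avoids the blowup via an even more refined ``dense level $j$'' decomposition (picking two indices $m_1,m_2\in W_j^u$ with $m_2-m_1>e^j/(10j^2)$), not a single-big-jump decomposition.

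\textbf{Where the $\eps$ factor comes from.} You locate the big jump at some $\ell$ and claim the constraint $\{\tau_u=k_u+1\}$ puts $\Pi_{\ell-1}|B_\ell|$ into a thin shell of width $O(\eps u)$. This is not forced: $Y_{k_u}$ is a sum, and a single product term can sit anywhere below $u$ while the remaining terms absorb the slack. The paper's mechanism is different and specific: it writes $Y_{k_u}=B_1+A_1Y'_{k_u-1}$ with $Y'_{k_u-1}$ the perpetuity built from indices $\ge 2$. Conditional on $Y'_{k_u-1}=s\sim u/e^n$ and on $B_1$, the event $Y_{k_u}\in J_\eps$ pins $A_1$ to an interval of length $\le \frac32\eps e^n$, and the density bound \eqref{H1.6} gives $\P[e^n\le A_1<e^{n+1},\,A_1\in\frac1s(J_\eps-B_1)]\le C\eps e^n\min\{e^{-n},1\}^D$. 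That is the sole place where the density hypotheses are used, and it is what produces the linear $\eps$ factor. The loss to $\eps^{1-\sigma}$ comes not from the density but from Lemma \ref{lem: 333}, which is applied to the remaining probability $\P[\Pi'_{k_u-1}B_{k_u+1}>\eps u/e^{n+1},\; Y^{+\prime}_{k_u-1}>u/(3e^{n+2})]$ and carries an $\eps^{-\theta}$ factor with $\theta<1$. You never isolate this auxiliary lemma, which is the technical core and the place where \eqref{H1.4} actually enters (through the choice $\b_1,\b_2<1$ with $\lambda(\b_i)<\lambda(\a)$ in a bivariate Chebyshev bound). In short: your overall intuition (density of $A$ gives the $\eps$, Petrov gives the $1/\sqrt{\log u}$) points in the right direction, but the decomposition you write down does not realize it, and without splitting off $A_1$ — rather than chasing the biggest term in the sum — you will not obtain the thin-interval constraint on a single $A$ variable that the density bound requires.
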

 As in the previous Section we consider here the joint law of $(Y_{k_u},\Pi_{k_u}B_{k_u+1} )$ and our main effort is to estimate
$$
\P\big[  Y_{k_u}\in\big((1-\eps)u,(1-\eps/2)u\big), \Pi_{k_u}B_{k_u+1} > \eps u \big].
$$
First we need two  technical lemmas.

\begin{lem}
\label{lem: 33}
Assume that \eqref{H1} and \eqref{H2} are satisfied.  For any fixed (small)  $\delta >0$ there exist $C_\d$ such that for every $n\in \Z$, $\epsilon>0$ and $u\geq 2$  we have
\begin{equation}\label{eq:33}
\P\bigg[ \Pi_{k_u-1} B_{k_u} > \frac{\eps u}{e^n} \bigg] \le \frac {C_\d}{\eps^{\a}} \frac{e^{\a n+\d |n|}}{\sqrt{\log u}} u^{-\ov \a}
\end{equation}
\end{lem}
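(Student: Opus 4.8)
The plan is to decompose the event $\{\Pi_{k_u-1}|B_{k_u}| > \eps u/e^n\}$ according to the dyadic size of $|B_{k_u}|$, then apply the Chebyshev/Petrov machinery from Section~2 to the remaining product $\Pi_{k_u-1}$, which is a genuine $(k_u-1)$-fold product of i.i.d.\ copies of $A$. First I would condition on $(A_{k_u},B_{k_u})=(a,b)$ and write
\begin{align*}
\P\bigg[\Pi_{k_u-1}|B_{k_u}| > \frac{\eps u}{e^n}\bigg]
&= \int \P\bigg[\Pi_{k_u-1} > \frac{\eps u}{b\,e^n}\bigg]\,\mu(da\,d|b|)\\
&= \sum_{m\ge 0}\int_{e^m \le |b| < e^{m+1}} \P\bigg[\Pi_{k_u-1} > \frac{\eps u}{b\,e^n}\bigg]\,\mu(da\,d|b|),
\end{align*}
and on the $m$-th slice one replaces $b$ by $e^m$ in the lower bound (losing a factor $e^{\a}$, which is harmless). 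The sum over $m$ converges because $\E|B|^{\a+\eps}<\8$ by \eqref{H2}, giving the geometric damping $\sum_m e^{m\a}\P[|B|\ge e^m]\lesssim 1$.

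The core estimate is then $\P[\Pi_{k_u-1} > \eps u e^{-m-n}/b_0]$ for $b_0$ of order $e^m$, which I would handle via Lemma~\ref{lem: petrov2.0} (or directly Lemma~\ref{thm:3.1Petrov}): writing the threshold as $t e^{n'\d_{n'}}$ with $t = \exp((k_u-1)\Lambda'(\a))$ — noting $k_u = \log u/\Lambda'(\a)$ so $t \asymp u$ — and with the perturbation exponent $\d$ absorbing the $\eps$, $e^n$ and $e^m$ corrections. Here one must check that the shift parameter stays within Petrov's admissible range $|\gamma_n|\le\delta(n)$; when $|n|$ or $m$ is not $o(\sqrt{\log u})$ one instead uses the crude Chebyshev bound $\P[\Pi_{k_u-1}>x]\le x^{-\a}\lambda(\a)^{k_u-1}$, which already yields $x^{-\a}\lambda(\a)^{k_u} \asymp (\eps u)^{-\a}e^{\a(n+m)}\lambda(\a)^{k_u} = \eps^{-\a} e^{\a(n+m)} u^{-\ov\a}$ up to constants — losing only the $1/\sqrt{\log u}$, which is why the statement carries the extra $e^{\d|n|}$ slack rather than a sharp exponent. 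Collecting the two regimes, summing the geometric series in $m$, and keeping track of the $e^{\a n + \d|n|}$ factor (the $\d|n|$ coming from the $O(\gamma_n^2 n)$ term in Petrov's exponent when $|n|$ grows) gives \eqref{eq:33} with a constant $C_\d$ depending on $\d$, $\a$, and $\mu$.

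The main obstacle is the uniformity over \emph{all} $n\in\Z$ and $\eps>0$: for $n$ negative and large, or $\eps$ tiny, the effective threshold $\eps u e^{-n}$ can be much larger or much smaller than $u$, so the naive application of Petrov's local theorem (which is calibrated to thresholds $\asymp u$) breaks down, and one has to interpolate between the sharp Petrov asymptotics in the "central" range and the lossy Chebyshev bound in the "tails"; the bookkeeping that shows the loss is no worse than the claimed $e^{\d|n|}$ is the delicate part. A secondary technical point is justifying the interchange of $\sum_m$ and $\P[\cdot]$ and that the slice-wise replacement $b \mapsto e^{m+1}$ in the event (making it larger) combined with $b\mapsto e^m$ in the moment bound is consistent — this is routine but must be done carefully to avoid an off-by-$e^{\a}$ that would spoil nothing but should be acknowledged.
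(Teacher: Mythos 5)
Your proposal is correct and follows the same strategy as the paper's proof: decompose according to the size of $|B_{k_u}|$ and of $n$, apply Petrov's local large-deviation estimate (Lemma~\ref{thm:3.1Petrov}) in the central regime, and fall back to Chebyshev with exponent $\a$ in the tail regimes, where the $e^{\d|n|}$ (or the extra geometric decay in $m$) compensates for the missing $1/\sqrt{\log u}$. The paper organizes this as three explicit cases --- $\d n>\sqrt{k_u}$ (pure Chebyshev), $\log|B_{k_u}|>\sqrt{k_u}$ (dyadic split of $|B|$ plus Chebyshev twice and $\E|B|^{\a+\eps}<\infty$), and the complementary central case (Petrov) --- which is exactly the centre/tail dichotomy you describe.
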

\begin{rem}
The above formula is meaningful only if the right hand side is smaller than 1 but it is useful to write the estimate in the unified way.
\end{rem}
\begin{proof}
We consider three cases making distinction depending on whether $n, \log |B_{k_u}|$ are bigger or smaller than $\sqrt{k_u}$.

\noindent
{\bf Case 1.}
Let first $\delta n> \sqrt{k_u}$. Then by the Chebychev inequality with exponent $\a $ we have
\begin{align*}
\P\bigg[ \Pi_{k_u-1} B_{k_u} > \frac{\eps u}{e^n} \bigg] &\le Ce^{k_u\L (\a)}\E |B_{k_u}|^{\a }\eps^{-\a} u^{-\a} e^{\a n}\\
&\le C\eps^{-\a} e^{\a n+\d |n|}e^{-\sqrt{k_u}} u^{-\ov \a}
\end{align*}
and \eqref{eq:33} follows.

\noindent
{\bf Case 2.}
If $\log |B_{k_u}|>\sqrt{k_u}$, we write
\begin{align*}
\P\bigg[ \Pi_{k_u-1} B_{k_u}>\frac{\eps u}{e^n}\bigg ]&\leq \sum _{m\geq  \sqrt{k_u}}
\P\bigg[ \Pi_{k_u-1}>\frac{\eps u}{e^{n+m+1}} \bigg ]\P\bigg[ B_{k_u}>e^m\bigg ]\\
&\leq Ce^{k_u\L (\a)}\eps^{-\a}u^{-\a} e^{\a n}\E |B_{k_u}|^{\a }\sum _{m\geq  \sqrt{k_u}}e^{-\d m}\\
&\leq C\eps^{-\a} e^{\a n}e^{-\d \sqrt{k_u}} u^{-\ov \a}
\end{align*}
which gives \eqref{eq:33}.

\noindent
{\bf Case 3.}
If $\log |B_{k_u}|\le \sqrt{k_u}$ and $n\le \sqrt{k_u}$, we use Petrov theorem and we obtain
\begin{equation}
\P\bigg[ \Pi_{k_u-1} B_{k_u} > \frac{\eps u}{e^n} \bigg] \le C \frac{u^{-\ov \a}}{\sqrt{\log u}}\eps^{-\a} e^{\a n}\E |B_{k_u}|^{\a },
\end{equation}
 which completes the proof.
\end{proof}
Let $B^+_n = \max\{B_n,0\}$ and $Y^+_n =\sum_{k=1}^n\Pi_{k-1}B_k^+$. Then of course $Y_n^+ \ge Y_n$.

\begin{lem}
\label{lem: 333}
Assume that \eqref{H1}, \eqref{H2} and \eqref{H1.4} are satisfied.  For any fixed (small)  $\delta >0$
there exist $C_\d$ and $\theta <1$
 such that for every $n\in \Z$, $\epsilon>0$, $c\geq 1$ and $u\geq 2$  we have
$$
\P\bigg[ \Pi_{k_u-1} B_{k_u} > \frac{\eps u}{e^n} \ \mbox{ and }  Y^+_{k_u-1}
 > \frac {c u}{e^n} \bigg] \le \frac {C_\d}{\eps^{\theta}} \frac{e^{\a n+\d |n|}}{\sqrt{\log u}} u^{-\ov \a}
$$
\end{lem}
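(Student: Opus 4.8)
The plan is to prove Lemma \ref{lem: 333} by splitting the event $\{\Pi_{k_u-1}B_{k_u}>\eps u e^{-n}\} \cap \{Y^+_{k_u-1}>cu e^{-n}\}$ according to which summand of $Y^+_{k_u-1}=\sum_{k=1}^{k_u-1}\Pi_{k-1}B_k^+$ is the dominant one. Writing $v = cue^{-n}$, if $Y^+_{k_u-1}>v$ then for some $i\ge 1$ we have $\Pi_{k_u-1-i}B^+_{k_u-i}> v/(2i^2)$ (a dyadic/polynomial pigeonhole as already used in the proof of Lemma \ref{lem: 5.9}). So the target probability is at most $\sum_{i\ge 1}\P[\Pi_{k_u-1}B_{k_u}>\eps u e^{-n},\ \Pi_{k_u-1-i}B^+_{k_u-i}>v/(2i^2)]$. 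The two events involve the \emph{overlapping} products $\Pi_{k_u-1}$ and $\Pi_{k_u-1-i}$; I would factor $\Pi_{k_u-1}=\Pi_{k_u-1-i}\cdot A_{k_u-i}\cdots A_{k_u-1}$ and condition, exactly as in Case 1 of Lemma \ref{lem: 5.9}, so that the tail of the common block $\Pi_{k_u-1-i}$ and the tail of the trailing block of length $i$ are handled by independent factors via Chebyshev with exponents $\a$ and $\b<\a$ respectively.

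Next I would organize the estimate as in Lemma \ref{lem: 5.9}, dividing $i$ into the range $i>K\log k_u$ (where the geometric factor $\delta^i=(\lambda(\b)/\lambda(\a))^i$ kills everything and contributes $o(u^{-\ov\a}/\sqrt{\log u})$) and the range $i\le K\log k_u$, where $i$ is small enough to apply Petrov's Lemma \ref{thm:3.1Petrov}/Lemma \ref{lem: petrov2.0} on the leading product while bounding the trailing block by a moment. The key quantitative input is that the bound from Lemma \ref{lem: 33} already gives the factor $\eps^{-\a}e^{\a n+\d|n|}u^{-\ov\a}/\sqrt{\log u}$ for the single event $\{\Pi_{k_u-1}B_{k_u}>\eps u e^{-n}\}$; the presence of the additional constraint $Y^+_{k_u-1}>cue^{-n}$ with $c\ge1$ gives an \emph{extra} factor that is a negative power of $c$ and, crucially, allows one to trade a power of $\eps$: replacing one Chebyshev exponent $\a$ by $\b$ in an appropriate place converts $\eps^{-\a}$ into $\eps^{-\theta}$ with $\theta=\b<1$ (after choosing $\b<1$, which is exactly where hypothesis \eqref{H1.4}, guaranteeing $\Lambda(\a)$ can be compared with $\Lambda(1)$ and that $\beta<1<\a$ is admissible, enters). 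One must also separate the cases $\log|B_{k_u}|\gtrless\sqrt{k_u}$ and $|n|\gtrless\sqrt{k_u}$ as in Lemma \ref{lem: 33} so that Petrov applies in the nonlattice ``local CLT'' regime and the error terms absorb into $(1+o(1))$.

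Concretely, on the main range one writes, for the generic term,
\begin{align*}
\P\Big[\Pi_{k_u-1-i}B_{k_u-i}^+>\tfrac{v}{2i^2},\ \Pi_{k_u-1}B_{k_u}>\tfrac{\eps u}{e^n}\Big]
&\le \int \P\Big[\Pi_{k_u-1-i}>\tfrac{v}{2i^2 b'}\Big]\,\P\Big[\Pi'_i>\tfrac{2i^2 b'\eps u}{e^n b a\, v}\Big]\,\mu(da\,d b)\,\mu(da'\,d b'),
\end{align*}
apply Chebyshev with exponent $\b$ to the first factor and exponent $\a$ to the $\Pi'_i$-factor (or Petrov when $i$ is small), collect the powers of $\eps$, $c$, $e^n$, the geometric factor $\delta^i$, and the polynomial $i^{2\cdot}$, and sum over $i$. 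The bookkeeping must show the total is $\le C_\d \eps^{-\theta}e^{\a n+\d|n|}u^{-\ov\a}/\sqrt{\log u}$ uniformly in $c\ge1$, $n\in\Z$, $\eps>0$, $u\ge 2$.

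The main obstacle I anticipate is the simultaneous control of the \emph{overlap} between $\Pi_{k_u-1}$ (appearing in the $B_{k_u}$-constraint) and $\Pi_{k_u-1-i}$ (appearing in the dominant-summand constraint): one cannot treat the two tail events as independent, and the factorization through the length-$i$ block must be done so that \emph{after} factoring, the bound from applying Petrov to the long common product is not double-counted. Getting the exponent down to $\theta<1$ — rather than the naive $\theta=\a$ inherited from Lemma \ref{lem: 33} — is the delicate point, and it is precisely here that one needs \eqref{H1.4}: either $\a_{\min}\le1$, so that $\b$ can be chosen with $\b<1\le\a_{\min}<\a$ and $\lambda(\b)<\lambda(\a)$ fails to help but $\lambda$ is still controlled, or $\Lambda(1)<\Lambda(\a)$, so that the exponent $1$ is admissible in the Chebyshev step applied to the trailing block and yields a genuinely summable geometric factor. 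I would isolate this choice of $\b$ (and the resulting $\theta$) at the very start of the proof and then let the rest be a routine, if lengthy, repetition of the estimates already developed for Lemma \ref{lem: 5.9} and Lemma \ref{lem: 33}.
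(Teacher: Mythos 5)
Your route is genuinely different from the paper's. The paper decomposes $Y^+_{k_u-1}$ dyadically in $m$, then applies a \emph{two-index} pigeonhole: if $Y^+_{k_u-1}>cue^m/e^n$ there must be a scale $j$ with more than $e^j/(10j^2)$ summands in the band $W_j^u$, from which one extracts two indices $m_1<m_2$ with $m_2-m_1>e^j/(10j^2)$. This produces \emph{three} independent blocks and, crucially, the extra geometric factor $\rho_1^{\,m_2-m_1}$ that kills the $e^{j(\a-\b)}$ growth coming from the scale parameter. You instead use the single ``dominant summand'' pigeonhole from Lemma \ref{lem: 5.9}, which merges the scale and the position into one index $i$ via the $1/(2i^2)$ weight and avoids a separate $j$-sum entirely. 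This is a legitimate simplification in spirit, but you still need to supply the case analysis for the Petrov regime (separating $|n|\lessgtr\sqrt{k_u}$, $|b'|\lessgtr e^{\sqrt{k_u}}$, $i\lessgtr K\log k_u$, and the dyadic excess level of $\Pi_{k_u-1-i}$) that the paper carries out in detail and you only gesture at.

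There is, however, a concrete error in your exponent allocation, and it is fatal as written: you assign Chebyshev exponent $\b$ to the long common block $\Pi_{k_u-1-i}$ and exponent $\a$ to the short trailing block $\Pi'_i$. This must be reversed. First, the $\eps$-threshold sits in the $\Pi'_i$-factor, so applying $\a$ there yields $\eps^{-\a}$, not $\eps^{-\theta}$ with $\theta<1$ (unless $\a<1$, which is not assumed). Second, and worse, Chebyshev with $\b$ on $\Pi_{k_u-1-i}$ gives $u^{-\b}\lambda(\b)^{\,k_u-1-i}$, and combined with $\lambda(\a)^{i}$ from the short block the sum over $i$ carries the factor $(\lambda(\a)/\lambda(\b))^i>1$, which blows up; even taking the maximal term, one lands on $u^{-\b}\lambda(\a)^{k_u}=u^{\a-\b}\,u^{-\ov\a}$, off by the positive power $u^{\a-\b}$. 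By strict convexity of $\Lambda$ one has $\Lambda(\a)-\Lambda(\b)<(\a-\b)\Lambda'(\a)$, so $u^{-\b}\lambda(\b)^{k_u}>u^{-\ov\a}$ as well — there is no way to recover the correct exponent with $\b$ on the long block. The allocation you need (and the one the paper makes in \eqref{eq: star1}, with $\a$ on $\Pi_{m_1-1}$ and $\b_1,\b_2<1$ on the later blocks) is the opposite of what you wrote: $\a$ on the long common block $\Pi_{k_u-1-i}$ to recover $u^{-\ov\a}$ via Petrov/Chebyshev, and $\b<1$ (possible under \eqref{H1.4}) on the short trailing block, so that the $\eps$-constraint produces $\eps^{-\b}$ with $\theta=\b<1$ and the geometric factor $(\lambda(\b)/\lambda(\a))^i<1$ makes the $i$-sum converge. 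Your own prose earlier in the proposal (``replacing one Chebyshev exponent $\a$ by $\b$ in an appropriate place converts $\eps^{-\a}$ into $\eps^{-\theta}$'') points at this; the displayed allocation contradicts it.
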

\begin{rem}
The above formula is meaningful only if the right hand side is smaller than 1 but it is usuful to write the estimate in the unified way. The Lemma will be used with fixed $c$. The condition $c\geq 1$ can be, of course, replaced by $c\geq c_0>0$.
\end{rem}
\begin{proof}
Denote
$$
g(n,m) = \P\bigg[ \Pi_{k_u-1} B_{k_u} > \frac{\eps u}{e^n} \ \mbox{ and } \frac {c u e^m}{e^{n}} < Y^+_{k_u-1}
\le \frac {c u e^{m+1}}{e^n} \bigg]
$$
It is sufficient to prove that for $m\ge 0$, $n\in \Z $ and $u\geq const$
\begin{equation}
\label{eq: gnm}
g(n,m) \le \frac {C e^{-\d m}}{\eps^{\theta}} \frac{e^{\a n+\d |n|}}{\sqrt{\log u}} u^{-\ov \a}
\end{equation}
Indeed, then
\begin{eqnarray*}
\P\bigg[ \Pi_{k_u-1} B_{k_u} > \frac{\eps u}{e^n} \ \mbox{ and }  Y^+_{k_u-1}
 > \frac {c u}{e^n} \bigg]
& = & \sum_{m\ge 0} g(n,m) \\
&\le & \sum_{m\ge 0}
\frac {C e^{-\d m}}{\eps^{\theta}} \frac{e^{\a n+\d |n|}}{\sqrt{\log u}} u^{-\ov \a}\\  &\le &
\frac {C}{\eps^{\theta}} \frac{e^{\a n+\d |n|}}{\sqrt{\log u}} u^{-\ov \a}\\  \end{eqnarray*}
To estimate $g(n,m)$, for $j\ge 1$ we  define the set of indices
$$
W_j^u = \bigg\{1\leq i< k_u : \frac{cue^m}{e^n e^j} \le \Pi_{i-1} B^+_{i}\le \frac {cue^m}{e^n e^{j-1}}
\bigg\}
$$ On the set  $\big\{ \frac {c u e^m}{e^{n}}\le Y^+_{k_u-1} \big\}$
there is  some $j>0$, such that  the number of elements in the set $W_j^u$ must be greater then $\frac{e^j}{ 10 j^2}$.
Indeed, assume that such a $j$ does not exists, i.e. for every $j>0$, $\# W_j^u \leq \frac{e^j}{10j^2}$, then
\begin{align*}
Y ^+_{k_u-1}& = \sum_{i< k_u} \Pi_{i-1} B^+_{i} = \sum_{j>0}\sum_{i\in W_j} \Pi_{i-1} B_{i}^+\\
 &\le \sum_j \frac{e^j}{10j^2}\cdot \frac{cue^m}{e^n e^{j-1}}
< \frac{cue^m}{e^n} \sum_j \frac{e}{10j^2} \\ &< \frac{cue^m}{e^n}.
\end{align*}

Let
$$
K^u =\bigg\{ (j,m_1,m_2): j\ge 1, 1\le m_1 < k_u, m_1 + \frac{e^j}{10 j^2} < m_2 < k_u
\bigg\}.
$$
Then
\begin{equation}
\label{eq: dgnm}
g(n,m)
\le \sum_{(j,m_1,m_2)\in K^u} \P\big[U(j,m_1,m_2)\big],
\end{equation} for
$$
U(j,m_1,m_2) = \bigg\{
\frac {cue^m}{e^n e^j} \le \Pi_{m_i-1}B^+_{m_i}\le \frac {cu e^m}{e^n e^{j-1}}, \ i=1,2,\
\mbox{ and } \ \Pi_{k_u-1}B_{k_u} > \frac {\eps u}{e^n}
\bigg\}.
$$
Below we use the convention $\Pi_k$, $\Pi'_n$, $\Pi''_m$ to denote independent products of $A_j$'s of length $k$, $n$, $m$, respectively.
Then for any triple $(j,m_1,m_2)\in K^u$ we have
\begin{align*}
 \P\big[U(j,& m_1,m_2)\big]\\
&\le \int
 \P\bigg[
\frac {cu e^m}{e^n e^j} \le \Pi_{m_1-1}b_1\le \frac {cu e^m}{e^n e^{j-1}}, \
\frac {cu e^m}{e^n e^j} \le \Pi_{m_1-1}a_1 \Pi'_{m_2-m_1} b_2 \le \frac {cu e^m}{e^n e^{j-1}},\ \\
&\quad \Pi_{m_1-1} a_1 \Pi'_{m_2-m_1} a_2 \Pi''_{k_u - m_2} b > \frac{\eps u}{e^n}\bigg]{\bf 1}_{\{b_1 > 0, b_2>0, b>0\}}
\mu(da_1,db_1)\mu(da_2, db_2)\mu(da,db)\\
&\le  \int
 \P\bigg[ \Pi_{m_1-1}\ge \frac {cue^m}{b_1 e^n e^{j}}\bigg] \cdot
 \P\bigg[ \Pi_{m_2-m_1} >  \frac {b_1}{b_2 a_1 e}\bigg] \cdot
 \P\bigg[ \Pi_{k_u-m_2} >  \frac {\eps e^{j-1}b_2}{cb a_2 e^m}\bigg]\\&\quad \cdot {\bf 1}_{\{b_1 > 0, b_2>0, b>0\}}
\mu(da_1,db_1)\mu(da_2, db_2)\mu(da,db)
\end{align*}
Fix parameters: $\b_1 = \a-\eps_1$, $\b_2 = \a-\eps_2$ such that $\b _1, \b_2<1$,  $\rho_1 = \frac{\l(\b_1)}{\l(\a)}<1$, $\rho_2 = \frac{\l(\b_2)}{\l(\a)}<1$ and $\rho _1 >\rho _2 $. Here we use \eqref{H1.4}. If $\a _{min}< 1$, $\L '(\a )>0$
then we take $\a _{min}<\b _2<\b _1<\min \{ 1,\a\} $. If $\a _{min}\geq 1$ and $\l (1)<\l (\a )$ then there is $\tilde \a <1$ such that $\l (\tilde \a )=\l (\a )$ and so we can take $\b_1 = \tilde \a+\eps_1$, $\b_2 =\tilde \a+\eps_2$.
We apply the Chebyshev inequality with parameters $\a, \b_1,\b_2$ and so
\begin{equation}
\label{eq: star1}
\begin{split}
 \P\big[U(j,& m_1,m_2)\big]\\
 &\le
  \int
\frac{(b_1 e^n e^j)^\a}{c^\a u^\a e^{\a m}}\; \l(\a)^{m_1}\cdot \frac{(b_2 a_1 e)^{\b_1}}{b_1^{\b_1}}\; \l(\b_1)^{m_2-m_1} \frac{(c ba_2 e^m)^{\b_2}}{(\eps e^{j-1}b_2)^{\b_2}}\; \l(\b_2)^{k_u-m_2} \\
&\quad \cdot
 {\bf 1}_{\{b_1 > 0, b_2>0, b>0\}} \mu(da_1,db_1)\mu(da_2, db_2)\mu(da,db)\\
&\le   C\eps^{-\b_2} e^{\a n} e^{-\eps_2 m} u^{-\ov \a}
   e^{j\eps_2} \rho_1^{m_2-m_1}\rho_2^{k_u-m_2} \\ &\quad  \cdot   \int
b_1^{\eps_1} b_2^{\b_1-\b_2} a_1^{\b_1} a_2^{\b_2} b^{\b_2} {\bf 1}_{\{b_1 > 0, b_2>0, b>0\}}
\mu(da_1,db_1)\mu(da_2, db_2)\mu(da,db)\\
&\le   C\eps^{-\b_2} e^{\a n} e^{-\eps_2 m} u^{-\ov \a}
   e^{j\eps_2} \rho_1^{m_2-m_1}\rho_2^{k_u-m_2}     \E [|B_1|^{\eps _1}A_1^{\b_1}]
 \E [|B_2|^{\b_1-\b_2} A_2^{\b_2}]
\E |B|^{\b_2}
\end{split}
\end{equation}
The product of expectations is finite, because of the H\"older inequality and
\eqref{H2}. Hence it is sufficient to estimate
$$ \eps^{-\b_2} e^{\a n} e^{-\eps_2 m} u^{-\ov \a}
\sum_{(j,m_1,m_2)\in K^u}   e^{j\eps_2} \rho_1^{m_2-m_1}\rho_2^{k_u-m_2}.$$
Notice that the sum (of geometric sequence ) above is always dominated by its maximal term, that is by  $Ck_u^{2\eps_2}<Ck_u$. Assume first that $n>\sqrt{k_u}$. Then combining \eqref{eq: dgnm} with the estimates above
\begin{equation}
\begin{split}
g(n,m)
&\le \sum_{(j,m_1,m_2)\in K^u} \P\big[U(j,m_1,m_2)\big]\\
&\le \eps^{-\b_2} e^{\a n} e^{-\eps_2 m} u^{-\ov \a}k^2_u \\ &\leq
\eps^{-\b_2} e^{(\a +\d)n} e^{-\d \sqrt{k_u}}e^{-\eps_2 m} u^{-\ov \a}k^2_u \\
&\leq \eps^{-\b_2} e^{(\a +\d)n} e^{-\eps_2 m} o\bigg(\frac{u^{-\ov \a}}{\sqrt{\log u}}\bigg)
\end{split}
\end{equation}
 For the rest we fix $C_1$, we assume that $n\leq \sqrt{k_u}$ and we consider 4 cases
\begin{eqnarray*}
K_1^u &=& \Big\{  (j,m_1,m_2)\in K^u:\; e^{j/2}>C_1 \log k_u   \Big\},\\
K_2^u &=& \Big\{  (j,m_1,m_2)\in K^u:\; e^{j/2} \le C_1 \log k_u,\ m_2 < k_u - k_u^{\frac 14}   \Big\},\\
K_3^u &=& \Big\{  (j,m_1,m_2)\in K^u:\; e^{j/2}\le C_1 \log k_u,\ m_1 < k_u - 2k_u^{\frac 14},\ m_2 \ge k_u - k_u^{\frac 14}   \Big\},\\
K_4^u &=& \Big\{  (j,m_1,m_2)\in K^u:\; e^{j/2} \le C_1 \log k_u,\ m_1 \ge k_u - 2k_u^{\frac 14}  \Big\}.
\end{eqnarray*}
\noindent
{\bf Case 1.} In this case there is $C_2$ such that $m_2-m_1>\frac{e^j}{ 10 j^2}\ge 2C_2 \log k_u$. Hence
\begin{align*}
g(n,m) & \leq C\eps^{-\b_2} e^{\a n} e^{-\eps_2 m} u^{-\ov \a}
\sum_{(j,m_1,m_2)\in K_1^u}   e^{j\eps_2} \rho_1^{m_2-m_1}\rho_2^{k_u-m_2}
\\ &\le
C\eps^{-\b_2} e^{\a n} e^{-\eps_2 m} u^{-\ov \a} k_u \rho_1^{C_2 \log k_u}\\
&=  \eps^{-\b_2} e^{\a n} e^{-\eps_2 m}  o\bigg(\frac{u^{-\ov \a}}{\sqrt{\log u}}\bigg)
\end{align*}

\noindent
{\bf Case 2.} For the sum over $K_2^u$ we write
\begin{align*}
g(n,m) &\leq C\eps^{-\b_2} e^{\a n} e^{-\eps_2 m} u^{-\ov \a}
\sum_{(j,m_1,m_2)\in K_2^u}   e^{j\eps_2} \rho_1^{m_2-m_1}\rho_2^{k_u-m_2}
\\ & \le
C\eps^{-\b_2} e^{\a n} e^{-\eps_2 m} u^{-\ov \a} k_u (\log k_u)^{2\eps_2 } \rho_2^{k_u^{\frac 14}}\\
&=  \eps^{-\b_2} e^{\a n} e^{-\eps_2 m}  o\bigg(\frac{u^{-\ov \a}}{\sqrt{\log u}}\bigg)
\end{align*}

\noindent
{\bf Case 3.} Here $m_2-m_1>k_u^{\frac 14}$ and  reasoning as above
\begin{align*}
g(n,m) &\leq C\eps^{-\b_2} e^{\a n} e^{-\eps_2 m} u^{-\ov \a}
\sum_{(j,m_1,m_2)\in K_3^u}   e^{j\eps_2} \rho_1^{m_2-m_1}\rho_2^{k_u-m_2}
\\ &\le
C\eps^{-\b_2} e^{\a n} e^{-\eps_2 m} u^{-\ov \a} k_u^2  (\log k_u)^{2\eps_2 } \rho_1^{k_u^{\frac 14}}\\
&=  \eps^{-\b_2} e^{\a n} e^{-\eps_2 m}  o\bigg(\frac{u^{-\ov \a}}{\sqrt{\log u}}\bigg)
\end{align*}

\noindent
{\bf Case 4a.} On the set $\Theta = \{ b_1 \le e^{\sqrt{k_u}}\}$ and $ |n-m| \le \sqrt {k_u}$
 we estimate in a slightly different way the first term, that is
$$ \P\bigg[ \Pi_{m_1} \ge \frac{cue^m}{b_1 e^n e^j}  \bigg].
$$
We  use Lemma \ref{thm:3.1Petrov}
which gives
$$ \P\bigg[ \Pi_{m_1} \ge \frac{cue^m}{b_1 e^n e^j}  \bigg] \le \frac{C (b_1 e^n e^j)^{\a}}{c^\a u^\a e^{m\a}}\cdot \frac{\lambda(\a)^{m_1}}{\sqrt{m_1}}.
$$
Then, if $ |n-m| \le 2\sqrt{k_u} $
\begin{multline*}
\P\bigg[ \Pi_{k_u-1} B_{k_u} > \frac{\eps u}{e^n} \ \mbox{ and } \frac {cue^m}{e^{n}} < Y^+_{k_u-1}
< \frac {cu e^{m+1}}{e^n},\ B\in \Theta\   \bigg]\\
\le C \eps^{-\b_2} e^{\a n} \frac{u^{-\ov \a}}{\sqrt{\log u}} e^{-\eps_2 m} \sum_{(j,m_1,m_2)\in K_4^u} e^{j\eps} \rho_1^{m_2-m_1}\rho_2^{k_u-m_2}.
\end{multline*}
Since, by the definition of $K^u$ the last sum is bounded, we obtain the required estimates.

\medskip

\noindent
{\bf Case 4b.} If $m> k_u^{\frac 14}$ we use \eqref{eq: star1} and obtain
$$
g(n,m) \le C \eps^{-\b_2} e^{\a n} e^{-\eps_2m/2} o\bigg(\frac{u^{-\ov \a}}{\sqrt{\log u}}\bigg).
$$

\medskip

\noindent
{\bf Case 4c.} If $n\le - \sqrt{k_u}$  and $m < k_u^{\frac 14}$, then for large $u$,  $e^{m}\le e^{\frac{\d |n|}{2}}$ and hence
$$ g(n,m)  \le \P\bigg[ \Pi_{k_u-1} B_{k_u}>\frac{\eps u}{ e^n} \bigg]  \le \frac{ e^{-\eps_1 m} e^{\a n+\d|n|}}{\eps^{\theta}}\frac{u^{-\ov \a}}{\sqrt{\log u}}.
$$

\noindent
{\bf Case 4d. } On the set $b_1\in \Theta^c$ we can sharpen  \eqref{eq: star1}. We get an extra decay for corresponding expectation $\int {\bf 1}_{\Theta^c}(b_1) b_1^{\eps _1} a_1 ^{\b _1} \mu(da_1,\,db_1)$.

We use the H\"older inequality with
 $\frac 1{p_1}+ \frac 1{p_2}=1$, $p_1$ close to 1. Then
$$
\int {\bf 1}_{\Theta^c}(b) b_1^{\eps _1} a_1^{\b _1}\mu(da_1 ,\,db_1) \le
\E \big[ A^{p_1\b _1}|B|^{p_1\eps _1 }]^{\frac{1}{p_1}}\cdot \P [|B|>
e^{\sqrt{k_u}}]^{\frac {1}{p_2}}$$
and the first term is finite agian by the H\"older inequality. Moreover, by \eqref{H2}.
$$
\P [|B|>
e^{\sqrt{k_u}}]^{\frac {1}{p_2}}\leq (\E |B|^{\a })
e^{-\sqrt{k_u}\a \slash p_2}$$
 and we deduce as above.

\medskip

Combining all the cases we obtain \eqref{eq: gnm} and complete the proof of the Lemma.
\end{proof}

\begin{proof}[Proof of Proposition \ref{prop: 6.1}]
{
We are going to show that for every $0<\eps < 1$
\begin{equation}
\label{eq: 6.1}
\P\big[  Y_{k_u}\in\big((1-\eps)u,(1-\eps/2)u\big), Y_{k_u+1}> u \big] \le \eps^{1-\sigma} \frac{C u^{-\ov \a}}{\sqrt{\log u}}.
\end{equation}
 \eqref{eq: 6.1} implies \eqref{62}.
Moreover, applying \eqref{eq: 6.1} to $\eps =2^{-n}$, $n=0,1,2...$ and summing up over $n$ we obtain Proposition \ref{prop: 6.1}.
Let $J_{\eps} = (1-\eps,1-\eps/2)u$.  
We write $Y_{k_u} = B_1 + A_1 Y'_{k_u-1}$ where
$Y'_{k_u-1} = B_2 + A_2 B_3 + \cdots + A_2\ldots A_{k_u-1}B_{k_u}$. We will also use notation $\Pi '_{k_u-1}=A_2\ldots A_{k_u}$. We have 
$$\P\big[  Y_{k_u}\in J_{\eps}, \Pi_{k_u} B_{k_u+1}>\eps u \big]
=
\P\big[ B_1+ A_1  Y'_{k_u-1}\in  J_{\eps}, \Pi_{k_u} B_{k_u+1}>\eps u \big]$$

and
\begin{equation}
\label{eq: diffomproof}
\begin{split}
\P\big[ B_1+ & A_1 Y'_{k_u-1} \in  J_{\eps}, \Pi_{k_u} B_{k_u+1}>\eps u \big]\\
&\le \sum_{n\in\Z} \P\Big[ e^n\le A_1 < e^{n+1}, B_1+\ A_1 Y'_{k_u-1}\in  J_{\eps}\ \mbox{ and }\  \Pi '_{k_u-1}B_{k_u+1} > \frac {\eps u}{e^{n+1}}\Big]\\
&\le \sum_{n\in\Z} \int {\bf 1}_{\{ \frac{u}{3e^{n+1}} < s < \frac {3u}{e^n}\}}\ \P\Big[e^n\le A_1 < e^{n+1},  A_1\in \frac 1s ( J_{\eps}-B_1) \Big]\\
&\qquad \cdot \P\Big[ \Pi '_{k_u-1}B_{k_u} > \frac{\eps u}{e^{n+1}}, Y'_{k_u-1}\in ds\Big]
\end{split}
\end{equation}
Notice that for $s\in(\frac{u}{3e^{n+1}},\frac {3u}{e^n})$ the interval $\frac 1{s} (J_{\eps}-B_1)$ has length at most $\frac 32\eps e^n$.
Thus, by Lemma \ref{lem: 333}
\begin{align*}
\P\big[ B_1+ & A_1 Y'_{k_u-1}\in  J_{\eps}, \Pi_{k_u} B_{k_u+1}>\eps u \big]\\
& \le \sum_{n\in \Z}
\int {\bf 1}_{\{ \frac{u}{3e^{n+1}} < s < \frac {3u}{e^n}\}}\ C\eps e^n\cdot \min\big\{ e^{-n}, 1 \big\}^D
\cdot \P\bigg[ \Pi_{k_u-1}B_{k_u} > \frac{\eps u}{e^{n+1}}, Y_{k_u-1}\in ds\bigg]\\
& \le \sum_{n\in \Z}
 C\eps e^n\cdot \min\big\{ e^{-n}, 1 \big\}^D
\cdot \P\bigg[ \Pi_{k_u-1}B_{k_u} > \frac{\eps u}{e^{n+1}},\ \frac{u}{3e^{n+2}} < Y_{k_u-1} < \frac{3u}{e^{n}} \bigg]\\
& \le \sum_{n\in \Z}
 C\eps e^n\cdot \min\big\{ e^{-n}, 1 \big\}^D
\cdot \P\bigg[ \Pi_{k_u-1}B_{k_u} > \frac{\eps u}{e^{n+1}},\  Y^+_{k_u-1} > \frac{u}{3e^{n+2}} \bigg]\\
& \le \sum_{n\in \Z}
 C\eps e^n\cdot \min\big\{ e^{-n}, 1 \big\}^D \frac 1{\eps^{\theta}}\ \frac{u^{-\ov \a}}{\sqrt{\log u}}\cdot e^{\a n+\d |n|}
 \le {C}{\eps^{1- \theta}}\ \cdot\frac{u^{-\ov \a}}{\sqrt{\log u}}
\end{align*}
and Proposition \eqref{eq: 6.1} follows.
}
\end{proof}

\begin{lem}\label{cor: diffomega} Assume that \eqref{H1},  \eqref{H2},  \eqref{H1.4}, \eqref{H1.6} and \eqref{H1.2} are satisfied. For any fixed (large)   $L>0$ there exist $C$ such that for every (small) $\epsilon , \eta >0$, $0<j\leq L$ and $u\geq 2$  we have
\begin{equation}
\label{eq: cdu1}\P\bigg[ \Pi_{k_u-1}B_{k_u} > \eps u ,\  \Pi_{k_u-L}Y_{j} '   \in u(1-\eta^L , 1+\eta^L) \bigg]\leq C  \eps^{-\a}\eta^L \cdot\frac{u^{-\ov \a}}{\sqrt{\log u}}.
\end{equation}
Moreover, for every $\epsilon , \eta >0$ and $u\geq 2$
\begin{equation}
\label{eq: cdu2}\P\bigg[ \Pi_{k_u-1}B_{k_u} > \eps u ,\  \Pi_{k_u-L}M_L'   \in u(1-\eta^L , 1+\eta^L) \bigg]\leq CL\eps^{-\a}\eta^L \cdot\frac{u^{-\ov \a}}{\sqrt{\log u}}
\end{equation}
and\begin{equation}\label{eq: cdu3}
\P \Big\{   \Pi_{k_u}B_{k_u+1}>\eps_0 u,      \Pi_{k_u-L} Y'_L + \Pi_{k_u}B_{k_u+1}    \in    (1-\eta^L, 1+\eta^L)u, \Big\} \leq C\eps^{-\a}\eta^L \cdot\frac{u^{-\ov \a}}{\sqrt{\log u}} .
\end{equation}
\end{lem}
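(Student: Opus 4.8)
The plan is to deduce all three inequalities from one ``narrow window plus tail'' estimate for the long product $\Pi_{k_u-L}$, isolated by conditioning on the last $L$ increments, and the bulk of the work will be that single estimate. The reductions are elementary. For \eqref{eq: cdu2} one notes $M_L'\Pi_{k_u-L}=Y_{j^*}'\Pi_{k_u-L}$ for the random index $j^*$ realizing $M_L'=\max\{0,Y_1',\dots,Y_L'\}$, hence $\{M_L'\Pi_{k_u-L}\in(1-\eta^L,1+\eta^L)u\}\subseteq\bigcup_{j=0}^{L}\{Y_j'\Pi_{k_u-L}\in(1-\eta^L,1+\eta^L)u\}$, with the $j=0$ term empty since $\eta^L<1$; intersecting with $\{\Pi_{k_u-1}B_{k_u}>\eps u\}$ and summing the $L+1$ bounds coming from \eqref{eq: cdu1} gives \eqref{eq: cdu2}. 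For \eqref{eq: cdu3}, since $A_{k_u-L+1}\cdots A_{k_u}B_{k_u+1}$ is exactly the last term of the length-$(L+1)$ perpetuity $Y_{L+1}'$ built from $(A_i,B_i)_{k_u-L+1\le i\le k_u+1}$, one has $\Pi_{k_u-L}Y_L'+\Pi_{k_u}B_{k_u+1}=\Pi_{k_u-L}Y_{L+1}'$ and $\Pi_{k_u}B_{k_u+1}=\Pi_{k_u-L}\tilde R$ with $\tilde R:=A_{k_u-L+1}\cdots A_{k_u}B_{k_u+1}$, so \eqref{eq: cdu3} is \eqref{eq: cdu1} with $\eps=\eps_0$ and $j=L+1$ (the cosmetic restriction $j\le L$ there plays no role, the argument below being insensitive to the exact value of the fixed integer).

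For \eqref{eq: cdu1} itself I would write $\Pi_{k_u-1}B_{k_u}=\Pi_{k_u-L}R$ with $R:=(A_{k_u-L+1}\cdots A_{k_u-1})B_{k_u}$ and observe that $R$ and $Y_j'$ are both measurable with respect to the block $\F_L:=\sigma\big((A_i,B_i):k_u-L+1\le i\le k_u\big)$, while $\Pi_{k_u-L}$ is an independent product of $k_u-L$ i.i.d.\ copies of $A$. Conditioning on $\F_L$, the conditional probability vanishes unless $R>0$ and $Y_j'>0$, and in that case the event forces $\Pi_{k_u-L}\in[p,q]$ with $p=\max\{\eps u/R,\,(1-\eta^L)u/Y_j'\}$ and $q=(1+\eta^L)u/Y_j'$, so that $q/p\le(1+\eta^L)/(1-\eta^L)\le1+3\eta^L$ and $p\ge\eps u/R$. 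Hence it suffices to estimate $\E\big[\P[\Pi_{k_u-L}\in[p,q]]\big]$, with $[p,q]$ depending on $\F_L$.

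The quantitative core is the bound $\P[\Pi_{k_u-L}\in[p,q]]\le C\,u^{-\ov\a}(p/u)^{-\a}\,\eta^L/\sqrt{k_u}$ whenever $p$ lies in the admissible large deviations range. I would subtract the two tail estimates provided by Lemma \ref{lem: petrov2.0} (with $n=k_u$, $j_n=L$, $t=e^{k_u\L'(\a)}=u$), which gives $\P[\Pi_{k_u-L}\ge p]=\tfrac{e^{-L\L(\a)}}{\a\s(\a)\sqrt{2\pi k_u}}u^{-\ov\a}(p/u)^{-\a}(1+o(1))$ and the same for $q$, so that their difference is $\tfrac{e^{-L\L(\a)}}{\a\s(\a)\sqrt{2\pi k_u}}u^{-\ov\a}(p/u)^{-\a}\big[1-(q/p)^{-\a}+o(1)\big]$, and $1-(q/p)^{-\a}\le3\a\eta^L$ produces the gain $\eta^L$; an alternative valid uniformly down to very small $\eta$ is to bound $\P[\Pi_{k_u-L}\in[p,q]]$ by a large deviations density estimate for $\Pi_{k_u-L}$ (available thanks to the density hypothesis \eqref{H1.6}, which also furnishes the nonlattice condition required by Petrov's theorem), the window of relative width $\lesssim\eta^L$ again contributing the factor. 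Since $(p/u)^{-\a}\le(\eps/R)^{-\a}=\eps^{-\a}R^{\a}$, taking expectations over $\F_L$ (the conditional probability being $0$ when $R\le0$ or $Y_j'\le0$) and using $\E[R^{\a}\mathbf 1_{R>0}]\le\l(\a)^{L-1}\E|B|^{\a}=e^{(L-1)\L(\a)}\E|B|^{\a}<\8$ by \eqref{H2}, the factor $e^{-L\L(\a)}$ cancels and one is left with $C\eps^{-\a}\eta^L\,u^{-\ov\a}/\sqrt{\log u}$, $C$ independent of $L$; for \eqref{eq: cdu3} the same works with $\E[\tilde R^{\a}\mathbf 1_{\tilde R>0}]\le\l(\a)^{L}\E|B|^{\a}$ against $e^{-L\L(\a)}$. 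On the complementary event — where a block variable, or $\eps$, is so extreme that $p$ leaves the Petrov range — one simply bounds $\P[\Pi_{k_u-L}\ge p]\le p^{-\a}\l(\a)^{k_u-L}\E|B|^{\a}$ by Chebyshev's inequality and absorbs the resulting super-polynomially small factor; this good-set/bad-set split is the same one already carried out in the proofs of Lemmas \ref{lem: 33} and \ref{lem: 333}, and one may assume throughout that the asserted right-hand sides are $<1$, which pins $\eps$ away from $0$. I expect this last, purely bookkeeping, step — making sure the two endpoints $p,q$ are evaluated where Lemma \ref{lem: petrov2.0} is uniform, so that the narrow window genuinely yields $\eta^L$ rather than being absorbed into the error term — to be the only delicate point.
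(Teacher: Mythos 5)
Your reductions of \eqref{eq: cdu2} and \eqref{eq: cdu3} to a single narrow-window estimate are correct and essentially the same as the paper's (the paper also derives \eqref{eq: cdu2} from \eqref{eq: cdu1} by comparing $M_L'$ with the $Y_j'$'s and treats \eqref{eq: cdu3} by the same computation). The problem is the key window estimate itself.

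Your primary route to $\P\big[\Pi_{k_u-L}\in[p,q]\big]\lesssim u^{-\ov\a}(p/u)^{-\a}\eta^L/\sqrt{k_u}$ — subtracting two Petrov tails and extracting the factor $1-(q/p)^{-\a}\lesssim\eta^L$ — has a genuine gap: the $o(1)$ error in Lemma \ref{lem: petrov2.0} is a function of $u$ only, while the lemma must hold for \emph{every} $\eta>0$ with a constant $C$ independent of $\eta$. For fixed (even enormous) $u$, one may have $\eta^L$ far smaller than the $o(1)$ error, and then the two $(1+o(1))$-terms swamp the difference; the subtraction yields nothing. You acknowledge this and gesture at ``a large deviations density estimate for $\Pi_{k_u-L}$'', but that would be a Bahadur–Rao local limit theorem for the product, which is not in the paper, is not an immediate consequence of \eqref{H1.6}, and is substantially more than what's needed.

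The paper's actual proof sidesteps all of this by peeling off a \emph{single} factor $A_1$ rather than the last block: writing $\Pi_{k_u-L}Y_j'=A_1\cdot(\Pi'_{k_u-L-1}Y_j')$ and $\Pi_{k_u-1}B_{k_u}=A_1\cdot\Pi'_{k_u-2}B_{k_u}$, it decomposes over $e^n\le A_1<e^{n+1}$, integrates over $s=\Pi'_{k_u-L-1}Y_j'$, and observes that the event forces $A_1$ into $\frac1sJ_{\eta^L}$, an interval of length $\lesssim\eta^Le^n$ on the slab where $s\asymp u/e^n$. Then \eqref{H1.6} gives $\P[e^n\le A_1<e^{n+1},\,A_1\in\frac1sJ_{\eta^L}]\le C\eta^Le^n\min\{e^{-n},1\}^D$ uniformly in $\eta$, and the remaining factor $\P[\Pi'_{k_u-2}B_{k_u}>\eps u/e^{n+1}]$ is controlled by Lemma \ref{lem: 33}; summing over $n$ gives $C\eta^L\eps^{-\a}u^{-\ov\a}/\sqrt{\log u}$. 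The density hypothesis on a single $A$ is exactly enough; no local CLT for $\Pi_{k_u-L}$ is required. If you want to keep your conditioning-on-the-last-block framework, you would still have to further condition on $A_2,\dots,A_{k_u-L}$ and use the density of $A_1$ to control the narrow window, which collapses back to the paper's argument — so the block-conditioning is a detour, and as written leaves the central estimate unproved.
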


\begin{proof} To prove \eqref{eq: cdu1}, we use the same argument as for the proof of  Proposition \ref{prop: 6.1}.
Let $J_{\eta^L}=u(1-\eta^L, 1+\eta^L)$. We have
\begin{equation}
\label{eq: diffomprooff}
\begin{split}
\P\big[\Pi_{k_u-L}Y_{j}' &\in J_{\eta^L},  \Pi_{k_u-1} B_{k_u}>\eps u \big]
\\ &=
\P\big[ A_1 \Pi '_{k_u-L-1} Y'_{j}\in  J_{\eta^L}, A_1 \Pi_{k_u-2}' B_{k_u}>\eps u \big]\\
&\le \sum_{n\in\Z} \P\Big[ e^n\le A_1 < e^{n+1},       A_1 \Pi_{k_u-L-1}' Y'_{j}\in  J_{\eta^L}           \mbox{ and }\  \Pi '_{k_u-2}B_{k_u} > \frac {\eps u}{e^{n+1}}\Big]\\
&\le \sum_{n\in\Z} \int {\bf 1}_{\{ \frac{u}{2e^{n+1}} < s < \frac {2u}{e^n}\}}\ \P\Big[e^n\le A_1 < e^{n+1},  A_1\in \frac 1s J_{\eta^L} \Big]\\
&\qquad \cdot \P\Big[ \Pi '_{k_u-2}B_{k_u} > \frac{\eps u}{e^{n+1}},      \Pi_{k_u-L-1}' Y'_{j} \in ds\Big]
\end{split}
\end{equation}
For $s$ in the domain of the integral, the length of $ \frac 1s J_{\eta^L}$ is at most $C \eta^L e^n$.
As before, $\P\Big[e^n\le A_1 < e^{n+1},  A_1\in \frac 1s J_{\eta^L} \Big] \le  C\eta^L e^n\cdot \min\big\{ e^{-n}, 1 \big\}^D$. Hence the last quantity of \eqref{eq: diffomprooff} is dominated by
$$
 \sum_{n\in \Z}
 C\eta^L e^n\cdot \min\big\{ e^{-n}, 1 \big\}^D
\cdot \P\bigg[ \Pi_{k_u-2}'B_{k_u} > \frac{\eps u}{e^{n+1}}\bigg]. $$ 
Now applying Lemma \ref{lem: 33} for a fixed $\d $ we obtain
\begin{align*}
\P\big[\Pi_{k_u-L}Y_{j}'\in J_{\eta^L}, \Pi_{k_u-1} B_{k_u}>\eps u \big]&\leq
\sum_{n\in \Z}
 C\eta ^L e^n\cdot \min\big\{ e^{-n}, 1 \big\}^D  e^{\a n+\d |n|} \frac{u^{-\ov \a}}{\sqrt{\log u}}\\
& \le C\eta ^L\eps^{- \a}\ \cdot\frac{u^{-\ov \a}}{\sqrt{\log u}}
\end{align*}
and \eqref{eq: cdu1} follows.

The estimate \eqref{eq: cdu2} follows immediately from  \eqref{eq: cdu1} and the definition   $M_L'$. The proof of \eqref{eq: cdu3} is similar to \eqref{eq: cdu1}.
\end{proof}

\section{Asymptotics}
\begin{proof}[Proof of Theorem \ref{thm: main theorem}]

\par
\noindent
{\bf Step 1.}
Fix $\eps _0$ and take u such that $\eps_0> u^{-\xi '}$. Then by Proposition \ref{prop: 6.1}
\begin{equation}
\label{eq: appf}
\begin{split}
\P[\tau=k_u+1] &= \P\big[ M_{k_u} \le u, M_{k_u+1} > u  \big]\\
 &= \P\big[ M_{k_u} \le u, Y_{k_u+1} > u  \big]\\
 &= \P\big[ M_{k_u} \le u,\ Y_{k_u}\in [(1-\eps_0)u,u],\  Y_{k_u+1} > u  \big]\\
&\quad + \P\big[ M_{k_u} \le u,\ Y_{k_u} < (1-\eps_0)u,\  Y_{k_u+1} > u  \big]\\
 &=  O\bigg(\frac{\eps_0^{1-\sigma}}{\sqrt{\log u}} u^{-\ov \a}\bigg) + \P\big[ M_{k_u} \le u,\ Y_{k_u} < (1-\eps_0)u,\  Y_{k_u+1} > u  \big].
\end{split}
\end{equation}
Thus is is sufficient to prove that
\begin{equation}\label{eq: 8.0}
\lim _{u\to \8 }u^{\ov \a}\sqrt{\log u}\; \P\big[ M_{k_u} \le u,\ Y_{k_u} < (1-\eps_0)u,\  Y_{k_u+1} > u  \big]\quad \mbox{exists}
\end{equation} for some fixed arbitrary small $\eps_0$. Indeed, having proved \eqref{eq: 8.0} we first let $u\to \infty $ and then $\eps _0\to 0$.

Similar arguments as in the proof of Lemma \ref{lem: 333} show that there are constants: large $L_0$ and $\eta,\eta_1<1$ and $C$ possibly depending on $\epsilon_0$ such that for any $L>L_0$
we have
\begin{equation}
\label{eq: wtss}
 \P\Big[ M_{k_u-L}\ge  \eta^L u,\;\Pi_{k_u} B_{k_u+1} >\eps_0 u\Big] \le \eta_1^L\ \frac {C}{\sqrt{\log u}}\ u^{-\ov \a}
\end{equation}
and
\begin{equation}
\label{eq: wtss'}
 \P\Big[ |Y_{k_u-L}|\ge  \eta^L u,\;\Pi_{k_u} B_{k_u+1} >\eps_0 u\Big] \le \eta_1^L\ \frac {C}{\sqrt{\log u}}\ u^{-\ov \a}
\end{equation}
Therefore, choosing large (but fixed) $L$, to prove the main result, it is enough to show that
\begin{equation}
\label{wtsss}
\lim_{u\to\8} u^{\ov \a} \sqrt{\log u}\, \P[\Omega] \quad \mbox{exists},
\end{equation}
where
$$
\Omega = \Big\{ M_{k_u-L}<\eta^L u,\ |Y_{k_u-L}|<\eta^L u,\  M_{k_u}< u,\  Y_{k_u}<(1-\eps_0)u,\ Y_{k_u+1}>u \Big\}.
$$
As before, we conclude letting first $u\to \infty $ then $L\to \infty $.
The limit in \eqref{wtsss}, if it exists,  has to be positive. Indeed, taking $L$ large, we can make the upper bounds in \eqref{eq: wtss} and \eqref{eq: wtss'}
smaller than the lower bound in
 Proposition \ref{lem: lower estimates large n}.

\noindent
{\bf Step 2.} To prove \eqref{wtsss} we modify further the set $\Omega$ and as we will see, it is sufficient to replace $\Omega$ by a set $\Omega_2$ defined below.
By the definition of $M_{k_u}$
$$
M_{k_u} = \max_{j=1,\ldots,k_u}\{Y_j\} = \max\Big\{ M_{k_u-L}, Y_{k_u-L} + \Pi_{k_u-L}M'_L \Big\},
$$
where $M'_L = \max_{j=1,\ldots,L}\sum_{i=k_u-L}^jA_{k_n-L+1}\cdots A_{i-1}B_i$. Notice that $M'_L$ has the same law as $M_L$.

Define the sets
\begin{align*}
\Omega_1 &= \Big\{ M_{k_u-L}<\eta^L u, \Pi_{k_u-L} M'_L<(1-\eta^L)u,\
|Y_{k_u-L}|<\eta^L u, \Pi_{k_u-L} Y'_L<(1-\eps_0-\eta^L)u,\ \\ &\qquad  \Pi_{k_u-L} Y'_L + \Pi_{k_u}B_{k_u+1}>(1+\eta^L)u
\Big\}\\
\Omega_2 &= \Big\{ \Pi_{k_u-L} M'_L\!<\!(1+\eta^L)u,
 \Pi_{k_u-L} Y'_L\!<\!(1\!-\!\eps_0\!+\!\eta^L)u, \Pi_{k_u-L} Y'_L + \Pi_{k_u}B_{k_u+1}\!>\!(1-\eta^L)u
\Big\}\\
\end{align*}
Then
$$
\Omega_1 \subset \Omega \subset \Omega_2.
$$
It is convenient to modify slightly $\Omega_1$ and consider
\begin{align*}
\Omega'_1 &= \Omega_1\cup\Omega''_1\\& =
\Big\{ \Pi_{k_u-L} M'_L<(1-\eta^L)u,\ \Pi_{k_u-L} Y'_L<(1-\eps_0-\eta^L)u,\\ &\qquad  \Pi_{k_u-L} Y'_L + \Pi_{k_u}B_{k_u+1}>(1+\eta^L)u
\Big\}
\end{align*}
for
\begin{align*}
\Omega''_1 &= \Big\{ M_{k_u-L}\ge \eta^L u,\ \Pi_{k_u-L} M'_L<(1-\eta^L)u,\ \Pi_{k_u-L} Y'_L<(1-\eps_0-\eta^L)u,\ \\ &\qquad  \Pi_{k_u-L} Y'_L + \Pi_{k_u}B_{k_u+1}>(1+\eta^L)u
\Big\}
\cup \Big\{|Y_{k_u-L}|\ge \eta^L u,\ \Pi_{k_u-L} M'_L<(1-\eta^L)u,\  \\ &\qquad \Pi_{k_u-L} Y'_L<(1-\eps_0-\eta^L)u,\  \Pi_{k_u-L} Y'_L + \Pi_{k_u}B_{k_u+1}>(1+\eta^L)u
\Big\}
\end{align*}
Notice that by \eqref{eq: wtss} and \eqref{eq: wtss'}
\begin{align*}
\P\big[ \Omega''_1 \big]&\le
 \P\Big[ M_{k_u-L}> \eta^L u,\ \Pi_{k_u}B_{k_u+1}>\eps_0 u \Big] +
  \P\Big[ |Y_{k_u-L}| > \eta^L u,\ \Pi_{k_u}B_{k_u+1}>\eps_0 u \Big]\\
 &\le \eta_1^L\; \frac{C_{\eps_0}}{\sqrt{\log u}} \ u^{-\ov \a}.
\end{align*}
We have
$$
\P\big[ \Omega'_1 \big] - \P\big[ \Omega''_1 \big]\le \P\big[ \Omega \big]\le \P\big[ \Omega_2 \big]
$$
We claim  that $\P[\Omega_2 \setminus \Omega_1']\leq  C\eps_0^{-\a}\eta^L \cdot\frac{u^{-\ov \a}}{\sqrt{\log u}}$. We have
\begin{eqnarray*}
\P[\Omega_2 \setminus \Omega_1']
&\leq&\P \Big\{ \Pi_{k_u-L} M'_L \in    (1-\eta^L, 1+\eta^L)u,  \Pi_{k_u}B_{k_u+1}>\eps_0 u\Big\}\\
&+&\P \Big\{ \Pi_{k_u-L} Y'_L \in    (1-\eps_0 -\eta^L, 1-\eps_0 +\eta^L)u,  \Pi_{k_u}B_{k_u+1}>\eps_0 u\Big\}\\
&+&\P \Big\{  \Pi_{k_u-L} Y'_L + \Pi_{k_u}B_{k_u+1}    \in    (1-\eta^L, 1+\eta^L)u,  \Pi_{k_u}B_{k_u+1}>\eps_0 u\Big\}.
\end{eqnarray*}
and the claim follows from Lemma  \ref{cor: diffomega} applied to each summand. Now it suffices to prove that
 $$\lim_{u\to\8} u^{\ov \a} \sqrt{\log u}\, \P[\Omega_2] \quad  \mbox{exists.}$$
\noindent
{\bf Step 3.}
To proceed we write
\begin{multline*}
P(u,\eps,\d,\g) \\ = \P \Big[   \Pi_{k_u-L} M'_L<(1+\eps)u,\  \Pi_{k_u-L} Y'_L<(1+\d)u,\  \Pi_{k_u-L} Y'_L + \Pi_{k_u}B_{k_u+1}>(1+\g)u \Big],
\end{multline*}
where $\eps , \d , \g $ may have arbitrary signs.
Notice that
\begin{equation}
\P(\Omega_2) = P(u,\eta^L, -\eps_0+\eta^L, -\eta^L).
\end{equation}
We are going to prove that
$$
P(u,\eps,\d,\g)=C_L\frac{u^{-\ov \a}}{\sqrt{\log u}}(1+o(1))$$ for some constant $C_L$.
In fact, it is sufficient to show that
\begin{equation}
\label{eq: dss}
\wt P(u,\eps,\d,\g)=C_L\frac{u^{-\ov \a}}{\sqrt{\log u}}(1+o(1)),
\end{equation}
where
\begin{align*}
\wt P(u,&\eps,\d,\g) = \P \bigg[   \Pi_{k_u-L} M'_L<(1+\eps)u,\  \Pi_{k_u-L} Y'_L<(1+\d)u,\ \\ &
\Pi_{k_u-L} Y'_{L+1}\!=\!\Pi_{k_u-L} Y'_L + \Pi_{k_u}B_{k_u+1}>(1+\g)u,\ \mbox{ and } ue^{-(\log u)^{\frac 14}}\!<\! \Pi_{k_u-L} \!<\! u e^{(\log u)^{\frac 14}} \bigg].
\end{align*}
Indeed
\begin{eqnarray*}
P(u,\eps,\d,\g)- \wt P(u,\eps,\d,\g) &\le & \P \Big[ \Pi_{k_u-L}> ue^{(\log u)^{\frac 14}}  \Big]\\
&& + \P \Big[ \Pi_{k_u-L}< ue^{-(\log u)^{\frac 14}} \mbox{ and } \Pi_{k_u}B_{k_u+1} > (\g-\d) u \Big].
\end{eqnarray*}
Applying the Chebychev inequality with $\a $ to the first term we have
$$
\P \Big[ \Pi_{k_u-L}> ue^{(\log u)^{\frac 14}}  \Big] \le e^{-\a (\log u)^{\frac 14}} u^{-\a} \lambda(\a)^{k_u-L} = o\bigg(\frac{u^{-\ov \a}}{\sqrt{\log u}}\bigg).
$$ For the second term we choose $\b >\a $ and we write
\begin{align*}
\P \Big[ & \Pi_{k_u-L} < ue^{-(\log u)^{\frac 14}} \mbox{ and } \Pi_{k_u}B_{k_u+1} > (\g-\d) u \Big]\\
&\le \sum_{m\ge 0 }
\P \Big[ ue^{-(\log u)^{\frac 14}} e^{-(m+1)}\le   \Pi_{k_u-L}\!<\! ue^{-(\log u)^{\frac 14}} e^{-m}\Big]\cdot
\P\Big[  \Pi_L B > (\g - \d) e^{(\log u)^{\frac 14}} e^{m}  \Big]\\
&\le \sum_{m\ge 0} \frac{C e ^{\a (\log u)^{\frac 14}} e^{\a m} }{u^\a}\, \lambda(\a)^{k_u-L}\, \frac{\lambda(\b)^L \E |B|^\b}{e^{\b (\log u)^{\frac 14} }  e^{\b m} } \\ &= o\bigg( \frac{u^{-\ov \a}}{\sqrt{\log u}} \bigg),
\end{align*}
applying Chebychev with $\a $ and $\b $ respectively. The our proof is reduced to \eqref{eq: dss}.

\noindent
{\bf Step 4.} Finally,
notice that  $\wt P(u,\eps,\d,\g)$ is the probability of a set on which
$$ ue^{-(\log u)^{\frac 14}}<\Pi_{k_u-L} < u e^{(\log u)^{\frac 14}}$$

Therefore, we may
apply the Petrov theorem and we have
\begin{align*}
\wt P(u,&\eps,\d,\g) \\
&
=\int  
\P \Big[u \max \Big (\frac{(1+\g)}{s_3}, \e^{-(\log u)^{\frac 14}}\Big ) <  \Pi_{k_u-L} <u \min \Big ({\frac{(1+\eps)}{s_1}, \frac{(1+\d)}{s_2},{\e^{(\log u)^{\frac 14}}}} \Big ) \Big]\\
& =
\P \Big[  M'_L  \in (s_1, s_1+ds_1),\  Y'_L \in  (s_2, s_2+ds_2)  ,
Y'_{L+1} \in  (s_3, s_3+ds_3) \Big]
\\
&
=\frac{C\,u^{-\ov \a}}{\sqrt{\log u}}(1+o(1))\E \bigg [\bigg (\Big (\frac{Y'_{L+1}}{1+\gamma}\Big )^\a -\max \bigg(\Big (\frac{M'_L}{(1+\eps)}\Big )^\a,
\Big (\frac{Y'_L}{(1+\d)}\Big )^\a \bigg ) \bigg )_{+} \bigg ]
\end{align*}

The last integral is finite by moment assumption \eqref{H2} and the conclusion  follows.
\end{proof}

\section{Proof of Theorem \ref{mthm2}}\label{sec: thm2}
First for $\b <\a +\xi $ we will need the following statement which is, in fact, Lemma \ref{lem: 5.9} with $r=D_1$.
\begin{lem}
\label{lem: s} Under assumptions of Theorem \ref{mthm2}, there are constants $ D_1, C_1, C$ such that for $n=\frac{\log u}{\Lambda'(\b)}$ and every $\g >0$
$$
\P\big[ \g u < \Pi_n < D_1 \g u, Y_n < C_1 \g u
\big] \ge \frac{C \lambda(\b)^n}{u^\b \g ^{\b}\sqrt{\log u}}
$$
\end{lem}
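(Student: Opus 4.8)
The plan is to recognise that this statement is Lemma~\ref{lem: 5.9} with $\a$ replaced by $\b$ and with $r=r_0=D_1$: under \eqref{H2.1} one has $B>0$ a.s., so $Y_n=\ov Y_n=\sum_{j=1}^n\Pi_{j-1}B_j$ and the two formulations coincide. I would therefore re-run the proof of Lemma~\ref{lem: 5.9} with that substitution, checking only that its two structural ingredients survive. First, since $n=\log u/\L'(\b)$ must be a positive integer we have $\L'(\b)>0$, i.e.\ $\b>\a_{\min}$; this is exactly what is needed both to apply Petrov's Lemma~\ref{thm:3.1Petrov} at the exponent $\b$ and to pick an auxiliary $\b'$ with $\a_{\min}<\b'<\b$ and $\l(\b')<\l(\b)$. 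Second, the Chebyshev estimates below require $\E A^{\b+\eps'}<\8$ and $\E B^{\b+\eps'}<\8$ for some $\eps'>0$; as the lemma is invoked only for $\b<\a+\xi$, this follows from \eqref{H2} provided $\xi<\eps$.

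Concretely, I would first apply Petrov's theorem with $c=\L'(\b)$ and $\gamma_n=(\log\g)/n\to 0$ to get, for fixed $\g$,
\[
\P\big[\g u<\Pi_n\le D_1\g u\big]=\frac{D(D_1)}{\sqrt{\log u}}\,\g^{-\b}\,u^{-\ov\b}\,(1+o(1)),\qquad u\to\8,
\]
where $\ov\b=\b-\L(\b)/\L'(\b)$, $D(D_1)>0$ because $D_1>1$, and $u^{-\ov\b}=\l(\b)^n u^{-\b}$ since $n\L(\b)/\L'(\b)=\log u^{\,\b-\ov\b}$. It then remains to show that for $C_1$ large enough
\[
\P\big[\g u<\Pi_n\le D_1\g u,\ Y_n>C_1\g u\big]\le\tfrac12\,\frac{D(D_1)}{\sqrt{\log u}}\,\g^{-\b}\,u^{-\ov\b}
\]
for all $u$ large (depending on $\g$, as in the remark after Lemma~\ref{lem: 5.9}); subtracting gives the claim with $C=\tfrac12 D(D_1)$.

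For this overshoot bound I would decompose the event over the index $i$ of a summand $\Pi_{n-i-1}B_{n-i}$ that (by pigeonhole) must exceed $C_1\g u/(2(i+1)^2)$, split the sum at $i=K\log n$, and use the factorisation $\Pi_n=\Pi_{n-i-1}A_{n-i}\Pi'_i$ together with independence of the three factors. For $i>K\log n$, a double Chebyshev inequality --- exponent $\b$ on $\Pi_{n-i-1}$ and exponent $\b'$ on $\Pi'_i$ --- produces a convergent series $\sum_i i^{2\eps'}\d^i$ with $\d=\l(\b')/\l(\b)<1$, of total size $O\big(C_1^{-\eps'}(\log u)^{-1}\big)\g^{-\b}u^{-\ov\b}$. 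For $i\le K\log n$ I would first discard the part where $\Pi_{n-i-1}B_{n-i}\ge C_1\g u\,n^L$ (negligible by Chebyshev for $L$ chosen as in \eqref{eq: L ineq}), then on the remaining moderate range apply Petrov to the factor $\Pi_{n-i-1}$ on the event $\{B\le e^{\sqrt n}\}$ and a crude Chebyshev on its complement, again coupled with a $\b'$-Chebyshev on $\Pi'_i$, obtaining a bound of order $C_1^{-\eps'}$ times the main term plus a term of order $(\log u)^{-1}$ times it. Choosing $C_1$ large makes the total at most half the main term.

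The main --- and essentially the only --- obstacle is the uniformity in $\g$ of this overshoot estimate, which is precisely what forces the delicate decomposition in the index $i$ in Lemma~\ref{lem: 5.9}; once that proof is read with $\b$ in place of $\a$ there is nothing genuinely new to carry out, which is why I would not reproduce the computation in full.
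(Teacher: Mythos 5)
Your proposal is correct and takes the same route as the paper: the paper's entire ``proof'' is the single sentence that this is Lemma~\ref{lem: 5.9} with $r=D_1$ (and $\alpha$ replaced by $\beta$), exactly as you observe, with $Y_n=\overline Y_n$ under \eqref{H2.1} making the two formulations coincide. The caveats you flag --- that $\Lambda'(\beta)>0$ is needed for Petrov and for the existence of an auxiliary $\beta'\in(\alpha_{\min},\beta)$ with $\lambda(\beta')<\lambda(\beta)$, and that the moment hypotheses at level $\beta$ follow from \eqref{H2} precisely because $\beta$ is taken in $(\alpha,\alpha+\xi)$ with $\xi$ small --- are the right checkpoints, and the paper leaves them implicit.
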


\begin{lem}
\label{lem: ss} Under assumptions of Theorem \ref{mthm2}, there are constants $D_2, C_2, C_3, C, m_0$ be such that for any $m>m_0$ and $\eps = e^{m\Lambda'(1)}<1$
$$
\P\big[ \eps \le \Pi_m \le D_2 \eps, C_3\le  Y_m \le C_2 \big]
\ge \frac{C \lambda(1)^m}{\eps \sqrt m}.
$$
\end{lem}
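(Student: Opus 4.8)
The plan is to follow the proof of Lemma~\ref{lem: 5.9}, but at the exponent $s=1$ instead of $\a$, and to arrange the two‑sided bound on $Y_m$ by means of a ``good block''. First observe that the standing hypothesis $\eps=e^{m\Lambda'(1)}<1$ forces $\Lambda'(1)<0$, i.e.\ $1<\a_{\min}$, so $\Lambda$ is strictly decreasing through $1$; hence $\lambda(\b)<\lambda(1)$ for every $\b\in(1,\a_{\min})$, and also $\lambda(\a)<\lambda(1)$ by~\eqref{H2.5}. These are the two inequalities that make the relevant geometric sums converge, exactly as $\lambda(\b)<\lambda(\a)$ did in Lemma~\ref{lem: 5.9}.

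\emph{Reduction to a single perpetuity.} Fix a (large) constant $L$. As in the proofs of Proposition~\ref{lem: lower estimates large n} and Lemma~\ref{lem: mult}, the absolute continuity in~\eqref{H2.3} together with $B>0$ gives constants $0<\eps_1<1$ and $0<C_3<C_2$, with $2\eps_1 T<C_2/2$ for the large constant $T$ fixed below, such that
$$p_0:=\P\big[\Pi_L\in(\eps_1,2\eps_1),\ Y_L\in(C_3,C_2/2)\big]>0 .$$
Take also $D_2>2$. Splitting $\Pi_m=\Pi_L\wh\Pi$ and $Y_m=Y_L+\Pi_L\wh Y$ with $(\wh\Pi,\wh Y)\overset{d}{=}(\Pi_{m-L},Y_{m-L})$ independent of $\F_L$, one checks that on the set
$$\{\Pi_L\in(\eps_1,2\eps_1),\ Y_L\in(C_3,C_2/2)\}\ \cap\ \{\wh\Pi\in J_m,\ \wh Y\le T\},\qquad J_m:=\big(\tfrac{\eps}{\eps_1},\tfrac{D_2\eps}{2\eps_1}\big),$$
one has $\Pi_m\in(\eps,D_2\eps)$ and $Y_m\in(C_3,C_2)$ (here $\wh Y\ge0$ by~\eqref{H2.1}, and $2\eps_1 T<C_2/2$ controls $\Pi_L\wh Y$). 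By independence it therefore suffices to prove $\P[\Pi_{m-L}\in J_m,\ Y_{m-L}\le T]\ge C\lambda(1)^m(\eps\sqrt m)^{-1}$ for all large $m$.

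\emph{Main term.} Since $\eps=e^{m\Lambda'(1)}$ and $L$ is fixed, $J_m$ is a multiplicative window of fixed ratio $D_2/2$ whose logarithm lies at bounded distance from $(m-L)\Lambda'(1)$, so in the notation of Lemma~\ref{thm:3.1Petrov} the correction $\gamma_n$ is $O(1/n)\to0$. Applying Petrov's theorem with $c=\Lambda'(1)$, so that the relevant exponent there equals $1$ — legitimate because $\log A$ is non‑lattice by~\eqref{H2.3} and $\a_\8>1$ by~\eqref{H2} — to the two endpoints of $J_m$ gives $\P[\Pi_{m-L}\in J_m]=c_1\lambda(1)^m(\eps\sqrt m)^{-1}(1+o(1))$ with $c_1>0$.

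\emph{Truncation error, the main obstacle.} It remains to show $\P[\Pi_{m-L}\in J_m,\ Y_{m-L}>T]\le\tfrac12 c_1\lambda(1)^m(\eps\sqrt m)^{-1}$ for $T$ large, \emph{uniformly in $m$}; the difficulty is that conditioning $\Pi_{m-L}$ to the atypically high window $J_m$ might in principle inflate $Y_{m-L}$. I would handle this exactly as in Cases~1--2 of the proof of Lemma~\ref{lem: 5.9}: on $\{Y_{m-L}>T\}$ some term $\Pi_{i-1}B_i$ exceeds $T/(2i^2)$, so one splits over $i$ and over the dyadic size of that term, estimates the long initial stretch of the walk by Petrov's theorem \emph{at exponent $1$} (this is what supplies the extra $1/\sqrt m$ and is the reason a bare Chebyshev bound there would lose a factor $\sqrt m$), and estimates the remaining factors by Chebyshev at exponents $\a$ and some $\b\in(1,\a_{\min})$; the resulting series over $i$ converges because $\lambda(\a)<\lambda(1)$ by~\eqref{H2.5} and $\lambda(\b)<\lambda(1)$, leaving a bound $\le C\,T^{1-\a}\lambda(1)^m(\eps\sqrt m)^{-1}$, small for $T$ large. (Equivalently, one may pass to the Esscher tilt $\wh\P$ with $d\wh\mu/d\mu(a,b)=a/\lambda(1)$: then $\log A$ has negative mean $\Lambda'(1)$, $Y_n$ converges a.s.\ to a finite $Y^{(1)}$, and $\wh\E[A^\d]=\lambda(1+\d)/\lambda(1)<1$ for small $\d>0$; a fractional‑moment bound $\wh\E\big[Y_{m-L}^\d\,\mathbf 1\{\Pi_{m-L}\in J_m\}\big]\le C/\sqrt m$, obtained from $(\sum x_j)^\d\le\sum x_j^\d$ and the Kolmogorov--Rogozin concentration estimate $\sup_t\wh\P[A_{j+1}\cdots A_{m-L}\in(t,D_2t)]\le C/\sqrt{m-L-j}$, gives the same conclusion after undoing the tilt.) Combining the three displays with $C:=c_1 p_0/2$ proves the lemma; the genuinely delicate point is the uniform‑in‑$m$ control of the truncation error.
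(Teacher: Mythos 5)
Your proposal is correct and is essentially the paper's argument, just assembled in a slightly different order. The paper first proves $\P[\eps\le\Pi_m\le D_2\eps]\ge C\lambda(1)^m/(\eps\sqrt m)$ by an Esscher tilt (change of measure $d\P_1 = \lambda(1)^{-1}a\,d\mu$) plus the local limit theorem (your Petrov-at-exponent-$1$ step is the same thing, since the shift $\gamma_n=O(1/m)$ is admissible), then bounds $\P[\eps\le\Pi_m\le D_2\eps,\ Y_m\ge C_2]$ by decomposing over the index $j$ where $\Pi_{j-1}B_j$ is large, using Chebyshev at exponent $\b$ for $j>rm$ and, for $j\le rm$, Chebyshev at exponent $\a$ on the short initial piece together with the tilted Berry--Esseen estimate on the long terminal piece $\Pi_{m-j-1}$ (this is precisely what produces the extra $1/\sqrt m$), giving $C\,C_2^{1-\a}\lambda(1)^m/(\eps\sqrt m)$; it then subtracts and, \emph{last}, appends a single step $(A_0,B_0)$ with $A_0\in(1,a_2),\ B_0\in(b_1,b_2)$ to lift $Y$ above $C_3=b_1$ while keeping $\Pi$ and $Y$ in multiplicatively/additively scaled windows. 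You instead peel off a fixed block of length $L$ at the front and condition on $\{\Pi_L\in(\eps_1,2\eps_1),\ Y_L\in(C_3,C_2/2)\}$ before applying the Petrov/truncation estimate to the remaining $m-L$ steps; this is logically equivalent and costs nothing, though it forces you to choose $L,\eps_1$ \emph{after} the truncation level $T$, a dependency worth stating explicitly. One small terminological slip: in the truncation error the concentration ($1/\sqrt m$) factor comes from the \emph{terminal} long stretch $\Pi'_{m-j-1}$ (with $j\le rm$, $r<1$), not the initial one; the initial $\Pi_j$ gets Chebyshev at exponent $\a$. Your alternative of replacing Berry--Esseen by the Kolmogorov--Rogozin concentration bound $\sup_t\wh\P[\Pi\in(t,D_2t)]\le C/\sqrt{n}$ together with a $\d$-fractional moment of $Y$ under the tilt is a genuinely different (and arguably cleaner) way to obtain the same uniform-in-$m$ control, avoiding the explicit third-moment hypothesis implicit in Berry--Esseen.
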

\begin{proof}

\noindent
{\bf Step 1.}
We change the probability space and consider the probability measure $\lambda(1)^{-1}a\mu (da,db)$. Denote by $\P_1$ the corresponding probability measure on the space of trajectories and by $\E_1$ the corresponding expected value. Then, for $S_m  =\log \Pi_m$,
\begin{eqnarray*}
\P[\eps\le \Pi_m \le D_2 \eps] &\ge& \frac{\lambda(1)^m}{D_2 \eps} \E \bigg[ {\bf 1}_{\{ \eps \le \Pi_m \le D_2 \eps \}} \frac{\Pi_m}{\lambda(1)^m}\bigg] \\
&=& \frac{\lambda(1)^m}{D_2\eps} \P_1\big[ 0\le S_m - m\Lambda'(1) \le \log D_2
\big].
\end{eqnarray*}
Since $S_m$ is a sum of iid random variables, $\E_1 S_m = m\Lambda'(1)$ and $\E_1 S_1^2<\8$ (because of (H2)), by the local limit theorem
$$\P[\eps\le \Pi_m \le D_2 \eps] \ge \frac{C \lambda(1)^m}{\eps \sqrt m}.$$
{\bf Step 2.} Denote $\Pi'_{m-j-1} = A_{j+2}\ldots A_m$. We have
\begin{eqnarray*}
\P\big[ \eps \le \Pi_m \le D_2\eps, Y_m \ge C_2 \big]&\le& \sum _{j=1}^{m-1} \P\bigg[ \eps < \Pi_m \le D_2 \eps, \Pi_jB_{j+1} > \frac{C_2}{2j^2}
\bigg]\\
&=& \sum _{j=1}^{m-1} \int \P\bigg[ \eps < \Pi_j a \Pi'_{m-j-1}< D_2 \eps,\, \Pi_j b > \frac{C_2}{2j^2}
\bigg]\mu(da,\,db)
\end{eqnarray*}
and for every $j$ we consider
$$P_j=\int \P\bigg[ \eps < \Pi_j a \Pi'_{m-j-1}< D_2 \eps,\, \Pi_j b > \frac{C_2}{2j^2}
\bigg]\mu(da,\,db).$$
{\bf Step 2a.} Since $\alpha _{min}>1$, $\L (1), \L'(1)<0$ and so we can choose  $r<1$ and $\beta$ such that $r\Lambda(\b)<\Lambda(1)-\Lambda'(1)$ and $\L(\b)<0$. Then, by the Chebyshev inequality,
\begin{eqnarray*}
\sum_{j> rm} P_j &\le&
\sum_{j> rm} \int \P\bigg[ \Pi_j b > \frac{C_2}{2j^2} \bigg]\mu(da,db)\\
&\le&\sum_{j> rm} \frac{(2j^2)^\b}{C_2^\b} \lambda(\b)^j \E B^\b \le  C  e^{rm(\Lambda(\b)+\d)}\\
&=& o\bigg(\frac{\lambda(1)^m}{\eps \sqrt m}\bigg)
\end{eqnarray*}
for a positive $\d $ such that $r(\L (\b )+\d) <\L (1)-\L '(1)$.

{\bf Step 2b.} For $j\le rm$ we write
\begin{eqnarray*}
\sum_{j\le rm} P_j &\le&\sum_{j\le rm}\sum_{k\ge 0} \int \P\bigg[ \eps < \Pi_j a \Pi'_{m-j-1}< D_2 \eps,\, \frac{C_2 e^k}{2j^2 b}<  \Pi_j  <  \frac{C_2 e^{k+1}}{2j^2 b}
\bigg] \mu(da,\, db)\\
&\le&\sum_{j\le rm}\sum_{k\ge 0}
\int \P\bigg[ \frac{2\eps j^2 b}{C_2 a e^{k+1}} <  \Pi_{m-j-1}<  \frac{2D_2 \eps j^2 b}{C_2 a e^{k}}\bigg]\cdot\P\bigg[
  \Pi_j  >  \frac{C_2 e^{k}}{2j^2 b}
\bigg] \mu(da,\, db)
\end{eqnarray*}
To proceed further we recall the  Berry-Essen theorem (see e.g. \cite{Petrovbook}) that
for an i.i.d. sequence $\{X_j\}$ with variance $\sigma^2$ and finite third moment,
gives
\begin{equation}
\label{eq: berry-essen}
\sup_x \bigg| \P\bigg[\frac{\sum_{j=1}^m X_j - m\E X_1 }{\s \sqrt m} < x\bigg] - \Phi(x) \bigg| \le \frac{\ov C\ \E|X_1-\E X_1|^3}{\s^3}\cdot \frac 1{\sqrt m}=\frac{\g }{\sqrt m}
\end{equation}
where $\Phi$ denotes the normal distribution and $\ov C$ is a universal constant.
Hence, changing again the probability space, we have
\begin{align*}
\P\bigg[  \frac{2\eps j^2 b}{C_2 a e^{k+1}} & <  \Pi_{m-j-1}<  \frac{2D_2 \eps j^2 b}{C_2 a e^{k}}\bigg]\\
&\le
\frac{\lambda(1)^{m-j-1} C_2 a e^{k+1}}{2\eps j^2 b}\E\bigg[ {\bf 1}_{\big\{\frac{2\eps j^2 b}{C_2 a e^{k+1}} <  \Pi_{m-j-1}<  \frac{2D_2 \eps j^2 b}{C_2 a e^{k}}\big\}} \frac{\Pi_{m-j-1}}{\lambda(1)^{m-j-1}} \bigg]\\
&= \frac{\lambda(1)^{m-j-1} C_2 a e^{k+1}}{2\eps j^2 b} \P_1\Big[
m\Lambda'(1)+\log(2j^2/C_2) + \log(b/a) - (k+1) < S_{m-j-1}\\
& \quad < m\Lambda'(1)+\log(2D_2j^2/C_2) + \log(b/a) - k \Big]\\
&= \frac{\lambda(1)^{m-j-1} C_2 a e^{k+1}}{2\eps j^2 b} \P_1\bigg[\frac{
(j+1)\Lambda'(1)+\log(2j^2/C_2) + \log(b/a) - (k+1)}{\sqrt{m-j-1}}\\
& \quad < \frac{S_{m-j-1} \!-\!(m\!-\!j\!-\!1)\Lambda'(1) }{\sqrt{m-j-1}} <
\frac{(j+1)\Lambda'(1)\!+\!\log(2D_2j^2/C_2)\! +\! \log(b/a) \!-\! k}{\sqrt{m-j-1}} \bigg]\\
&\le \frac{\g C_2 \lambda(1)^{m-j-1}  a e^{k+1}}{2\eps j^2 b\sqrt{m-j-1}}.
\end{align*}
\eqref{eq: berry-essen} has been used in the last inequality. For $\P\big[
  \Pi_j  >  \frac{C_2 e^{k}}{2j^2 b} \big]$ we use the Chebychev inequality with $\a $ and so
\begin{eqnarray*}
\sum_{j\le rm} P_j &\le& C \sum_{j\le rm}\sum_{k\ge 0} \frac{C_2 \lambda(1)^{m-j-1}e^k}{\eps j^2\sqrt{m-j-1}}\frac{ j^{2\a} \lambda(\a)^j}{C_2^\a e^{\a k}}\int ab^{\a-1}\mu(da,\, db)\\
&\le& C C_2^{1-\a} \frac{\lambda(1)^m}{\eps \sqrt m} \sum_{j\le rm } \bigg(\frac{\lambda(\a)}{\lambda(1)}\bigg)^j j^{2\a}\\
&\le& \frac{C}{C_2^{\a-1}} \frac{\lambda(1)^m}{\eps\sqrt m}
\end{eqnarray*}
{\bf Step 3.}
Combining first two steps and taking large $C_2$ we obtain
$$
\P\big[ \eps \le \Pi_m \le D_2 \eps,  Y_m \le C_2 \big]
\ge \frac{C \lambda(1)^m}{\eps \sqrt m}.
$$ Take parameters $a_2>1$, $ b_1<b_2$, $\eta>0$ such that
$$ \P\big[ A\in (1,a_2),\,B\in (b_1,b_2)
\big] \ge \eta >0
$$
Then, for $(A_0, B_0)$ independent of $\Pi _m$ and $Y_m$, we have
\begin{eqnarray*}
\eta \frac{C \lambda(1)^m}{\eps \sqrt m}
&\le& \P\big[ A_0\in (1,a_2),\,B_0\in (b_1,b_2)
\big] \cdot  \P\big[ \eps \le \Pi_m \le D_2 \eps,  Y_m \le C_2 \big]\\
&=&
\P\big[ A_0\in (1,a_2),\,B_0\in (b_1,b_2)\ \mbox{ and } \
 \eps \le \Pi_m \le D_2 \eps,  Y_m \le C_2 \big]\\
&\le&
\P\big[
 \eps \le \Pi_{m+1} \le a_2 D_2 \eps, b_1 \le  Y_m \le b_2 + a_2 C_2 \big].
\end{eqnarray*}
Thus the Lemma follows for $C_3=b_1$ and for $D_2, C_2$ replaced by $a_2 D_2$,  $b_2+a_2 C_2$.
\end{proof}

\begin{proof}[Proof of Theorem \ref{mthm2}]
Let  $\b>\a$ be very close to $\a$ and define $p$ by the relation
\begin{equation}
\label{eq: sss}
\Lambda'(\b) = \frac{\Lambda'(\a)}{p}.
\end{equation}
Since $\L''(\b)>0$, we have $p<1$.
The precise value of $\b$ will be chosen later on. Take $n=pk_u = \frac{\log u}{\Lambda'(\b)}$.  Let $q = 1-p$,  $m=qk_u = k_u-n$. We write
$$
Y_{k_u} = Y_n  + \Pi_{n-1} A_n Y'_m = Y_{n-1} + \Pi_{n-1}B_n + \Pi_{n-1}A_n Y'_m.
$$ for $Y'_m = \sum_{i=n+1}^{k_u} A_{n+1}\ldots A_{i-1} B_i$. Then $Y'_m$ has the same law as $Y_m$. We denote also $\Pi'_j = A_{n+1}\ldots A_{n+j}$, $B'_m = B_{k_u}$ and we consider the set

$$
\Omega = \Big\{ \g u < \Pi_{n-1} < D_1 \g u, Y_{n-1} \le C_1 \g u, C_3 \le Y'_m \le C_2, \eps<\Pi'_{m-1}< D_2 \eps,
b_1 <  B_m' < b_2
\Big\},
$$
where $\g,D_1, $ are parameters given in Lemma \ref{lem: s}, $C_2,C_3,D_2$ are described in Lemma \ref{lem: ss}
$\eps = e^{m\Lambda'(1)}$, $b_1$ and $b_2$ are chosen as in \eqref{H2.3}.

By Lemmas \ref{lem: s} and \ref{lem: ss}
\begin{equation}
\label{eq: ss}
\P(\Omega) \ge \frac{C \lambda(\b)^n}{u^\b}\, \frac 1{\sqrt n}\, \frac{\lambda(1)^m}{\eps \sqrt m}.
\end{equation}
We write
\begin{equation}
\begin{split}
 \label{eq: s}
  \P\big[ & Y_{k_u-1}  \le u, Y_{k_u}>u \big]\\
&= \P\Big[ Y_{n-1} + \Pi_{n-1} B_n + \Pi_{n-1} A_n Y'_m - \Pi_{n-1} A_n \Pi'_{m-1} B_m \le u,\\
&\quad  Y_{n-1} + \Pi_{n-1} B_n + \Pi_{n-1} A_n Y'_m > u
\Big]\\
&\ge \P\bigg[\bigg\{ \frac{u-Y_{n-1} -\Pi_{n-1} B_n  }{\Pi_{n-1} Y'_m} < A_n \le
\frac{u-Y_{n-1} -\Pi_{n-1} B_n  }{\Pi_{n-1} (Y'_m - \Pi'_{m-1} B'_m)}, b_1\leq B_n \leq b_2
\bigg\} \cap \Omega
\bigg]
\end{split}
\end{equation}
Notice that on the set $\Omega$, for $b_1\le B_n \le b_2$ we have
\begin{eqnarray*}
\frac{u-Y_{n-1} -\Pi_{n-1} B_n  }{\Pi_{n-1} (Y'_m - \Pi'_{m-1} B'_m)} &<&
\frac{u}{\g u C_3/2} = \frac {2}{\g C_3} =: C_+\\
\frac{u-Y_{n-1} -\Pi_{n-1} B_n  }{\Pi_{n-1} Y'_m } &>& \frac{u - C_1\g u - \g u b_1}{D_1 \g u C_2}
= \frac{1- C_1 \g - \g b_1}{D_1 \g C_2} =: C_-.
\end{eqnarray*}
Here we have used $B_m' \Pi_{m-1}' \leq D_2 b_2\eps\leq \frac{C_3}{2}$ for large $m$.
Moreover on $\Omega$
\begin{align*}
\frac{u-Y_{n-1} -\Pi_{n-1} B_n  }{\Pi_{n-1} (Y'_m - \Pi'_{m-1} B'_m)} - &
\frac{u-Y_{n-1} -\Pi_{n-1} B_n  }{\Pi_{n-1} Y'_m }\\ &\ge
\frac{u-Y_{n-1} -\Pi_{n-1} B_n  }{\Pi_{n-1} } \cdot
\frac{\Pi'_{m-1} B'_m }{Y'_m (Y'_m - \Pi'_{m-1} B'_m)}\\
&\ge \frac{1-C_1 \g - \g b_1}{D_1 \g} \cdot \frac{\eps b_1}{ C_2^2} = d_1\eps.
\end{align*}
and we have $C_{-}>0,d_1>0$ if we take $(C_1+b_1)\gamma\leq \frac12$.

The set $\Omega$ is independent on $A_n, B_n$. By \eqref{eq: s}  for fixed value of $b=B_n$, we can take any  $A_n$ in \eqref{eq: s} from
some interval $I_{d_1 \eps, b}\subset (C_-,C_+)$ (depending on $b$) of length at least $d_1 \eps$.
In view of \ref{H2.3}
$d_2 = \inf_{C_- < a < C_+ } g_A(a)$ is strictly positive. Then, by \eqref{eq: ss}
$$
\P\big[ Y_{k_u-1} < u, Y_{k_u} > u \big] \ge \inf_{I_{d_1 \eps, b}\subset ( C_-, C_+)}\P\big[ A\in I_{d_1\eps} \big] \P[\Omega]
\ge Cd_1d_2 \cdot \frac{\lambda(\b)^n}{u^\b} \, \frac 1{\sqrt n} \, \frac{\lambda(1)^m}{\sqrt m}
$$
Since $\sqrt{nm}$ is of order $\log u$, to finish the proof we have to justify that
$$
 \frac{\lambda(\b)^n \lambda(1)^m }{u^\b} \ge \frac{\lambda(\a)^{k_u}}{u^\a} \cdot u^\d
$$ for some $\d$. In other words we want to show
$$
\bigg(\frac{\lambda(\b)}{\lambda(\a)}\bigg)^n
\bigg(\frac{\lambda(1)}{\lambda(\a)}\bigg)^m \ge u^{\b-\a} u^\d.
$$
Choose $p$ in \eqref{eq: sss} such that $\b-\a < \eta$ for $\eta = \frac{\log \mu}{\Lambda'(\a)}$ and $\mu = \frac{\lambda(1)}{\lambda(\a)}>1$. Then, by the Taylor expansion of $\Lambda$, since $\L''(\b)>0$, $\Lambda (\b)-\Lambda (\a )\geq \Lambda '(\a )(\b -\a )$ and so
\begin{eqnarray*}
\bigg(\frac{\lambda(\b)}{\lambda(\a)}\bigg)^n
\bigg(\frac{\lambda(1)}{\lambda(\a)}\bigg)^m &=&e^{n(\Lambda(\b)- \Lambda(\a))}\mu^m\\ &\geq& e^{n(\b-\a)\Lambda'(\a)}\mu^m
\end{eqnarray*}
But $n(\b-\a)\Lambda'(\a)=(\b -\a )p\log u$ and $\mu ^m=
e^{\log \mu\cdot \frac{q\log u}{\Lambda'(\a)}}=e^{q\eta \log \mu}$. Hence
\begin{eqnarray*}
\bigg(\frac{\lambda(\b)}{\lambda(\a)}\bigg)^n
\bigg(\frac{\lambda(1)}{\lambda(\a)}\bigg)^m
&\ge & u^{p(\b-\a)} u^{\eta q}\\
&=& u^{\b-\a} u^{(\eta - (\b-\a))q},
\end{eqnarray*}
which completes proof of the Theorem.
\end{proof}
\begin{rem}
\label{rem: 8.1}
The assumption on $g_A(a)$ in \eqref{H2.3} can be weakened to  $g_A(a)\geq c >0$ on some interval $(a_1, a_2)$ with $a_1 >1$.
The proof is similar. It requires only a more careful definition of $\Omega$.
If   $g_A(a)da$ contains a nontrivial absolutely
continous part, by Steinhaus theorem, its convolution power has to satisfy the condition above. We leave the details for the reader.
\end{rem}

\bibliographystyle{alpha}

\begin{thebibliography}{99}

\bibitem{Buraczewski}
\textsc{D.~Buraczewski.}
\newblock On tails of fixed points of the smoothing transform in the boundary
  case.
\newblock {\em Stochastic Process. Appl. {\bf 119}, 3955--3961, 2009.}



\bibitem{BCDZ}
\textsc{D.~Buraczewski, J.~Collamore, E.~Damek, and J.~Zienkiewicz.}
\newblock{Large deviation estimates  for exceedance times of perpetuity sequences and their dual processes.}
\newblock{\em  Ann. Probab., to appear.}


\bibitem{BD}
\textsc{D.~Buraczewski,  E.~Damek.}
\newblock{A simple proof of heavy tail estimates for affine type Lipschitz recursions.}
\newblock{\em  Stochastic Process. Appl.  {\bf 127}, 657--668, 2017.}



\bibitem{BDMZ}
\textsc{D.~Buraczewski, E.~Damek, T.~Mikosch and J.~Zienkiewicz.}
\newblock{Large deviations for solutions to stochastic recurrence equations under Kesten's condition.}
\newblock{\em  Ann. Probab., Vol. 41, No. 4, 2755--2790, 2013.}

\bibitem{BDM}
\textsc{D.~Buraczewski, E.~Damek and  T.~Mikosch.}
\newblock{Stochastic models with power-law
tails: the equation $X=AX+B$.}
\newblock{\em  Springer, 2016.}





\bibitem{BDZ}
\textsc{D.~Buraczewski, E.~Damek and J.~Zienkiewicz.}
\newblock{ Precise tail asymptotics of fixed points of the smoothing transform with general weights}
\newblock{\em Bernoulli, Vol. 21(1), 489--504, 2015.}


\bibitem{BM}
\textsc{D.~Buraczewski, M.~Ma\'slanka.}
\newblock{Precise large deviations for the first passage time of random walk with negative drift}
\newblock{\em Proc. AMS, to appear.}



\bibitem{CV}
\textsc{J.~F.~Collamore and A.~N. Vidyashankar.}
\newblock Tail estimates for stochastic fixed point equations via nonlinear
  renewal theory.
\newblock {\em Stochastic Process. Appl. {\bf 123}, 3378--3429, 2013}.


\bibitem{DZ}
\textsc{A.~Dembo and  O.~Zeitouni.}
\newblock{Large Deviations Techniques and Applications.}
\newblock {\em Springer 1998.}


\bibitem{Engle}
\textsc{R.~F.~Engle.}
\newblock{ Autoregressive conditional heteroscedasticity with estimates of the
  variance of {U}nited {K}ingdom inflation.}
\newblock {\em Econometrica {\bf 50}, 987--1007, 1982.}


\bibitem{ESZ}
\textsc{N.~Enriquez, C.~Sabot and O.~Zindy.}
\newblock A probabilistic representation of constants in Kesten's renewal theorem.
\newblock {\em Probab. Theory Related Fields {\bf 144}, no. 3-4, 581--613, 2009.}


\bibitem{Go}
{\sc C.~M.~Goldie.}
\newblock Implicit renewal theory and tails of solutions of random equations.
\newblock {\em Ann. Appl. Probab. 1}, 1 , 126--166, 1991.


\bibitem{Guivarc'h}
\textsc{Y.~Guivarc'h.}
\newblock Sur une extension de la notion de loi semi-stable.
\newblock {\em Ann. Inst. H. Poincar\'e Probab. Statist. \em {\bf 26},
  261--285, 1990.}





\bibitem{Iksanov:2017}
{\sc A. Iksanov.}
\newblock Renewal theory for perturbed random walks and similar
processes.
\newblock {\em Birkh\"{a}user, 2016.}


\bibitem{Kesten}
{\sc H.~Kesten.}
\newblock Random difference equations and renewal theory for products of random
  matrices.
\newblock {\em Acta Math. {\bf 131}\/}, 207--248, 1973.


\bibitem{KKS}
\textsc{H.~Kesten, M.~Kozlov and F.~Spitzer}
\newblock A limit law for random walk in a random environment
\newblock { Compositio Mathematica {\bf 30}, 145--168, 1975.}



\bibitem{Laley}
\textsc {S.~Lalley.}
\newblock Limit theorems for first-passage times in linear and nonlinear renewal theory.
\newblock {\em Adv. in Appl. Probab. 16, no. 4, 766--803., 1984 }

\bibitem{Liu}
\textsc{Q.~Liu.}
\newblock On generalised multiplicative cascades.
\newblock {\em Stochastic Process. Appl. {\bf 86}, 263--286, 2000.}


\bibitem{Mikosch}
\textsc{T.~Mikosch.}
\newblock Modeling dependence and tails of financial time series.
\newblock {\em In B.~Finkenst\"{a}dt and H.~Rootz\'{e}n (Eds.), Extreme Values
  in Finance, Telecommunications, and the Environment, pp.\  185--286. Boca
  Raton: Chapman and Hall., 2003}



\bibitem{N1}
\textsc{H.~Nyrhinen.}
\newblock Finite and infinite time ruin probabilities in a stochastic economic environment.
\newblock {\em Stochastic Process. Appl.  92 (2), 265--285, 2001.}

\bibitem{N2}
\textsc{H.~Nyrhinen.}
\newblock On the ruin probabilities in a general economic environment.
\newblock {\em Stochastic Process. Appl.  83 (2), 319--330, 1999.}


\bibitem{Petrov}
\textsc{V.~Petrov.}
\newblock On the probabilities of large deviations for sums of independent
  random variables.
\newblock {\em Theory Probab. Appl. 10, 287--298, 1965.}


\bibitem{Petrovbook}
\textsc{V.~Petrov.}
\newblock {\em Limit Theorems of Probability Theory}, Volume~4 of {\em Oxford
  Studies in Probability}, 1995.




\end{thebibliography}
\newcommand{\etalchar}[1]{$^{#1}$}

\end{document}